\newcommand{\n}{\mathcal{N}}
\newcommand{\HH}{\mathcal{H}}
\newcommand{\U}{\mathcal{U}}
\newcommand{\PP}{\mathbb  P}
\newcommand{\Z}{\mathbb  Z}
\newcommand{\I}{\mathbb  I}
\newcommand{\RR}{\mathbb  R}
\newcommand{\C}{\mathbb  C}
\numberwithin{equation}{section}
\newtheorem{theorem}[equation]{Theorem}
\newtheorem{definition}[equation]{Definition}
\newtheorem{proposition}[equation]{Proposition}
\newtheorem{cor}[equation]{Corollary}
\newtheorem{lemma}[equation]{Lemma}
\newtheorem{problem}[equation]{Problem}
\newtheorem{remark}[equation]{Remark}
\begin{document}
\title{On Convergence of Oscillatory Ergodic Hilbert Transforms}
\author{Ben Krause}
\address{
Department of Mathematics
The University of British Columbia \\
1984 Mathematics Road
Vancouver, B.C.
Canada V6T 1Z2}
\email{benkrause@math.ubc.ca}
\thanks{Research supported in part by an NSF Postdoctoral Research Fellowship. This material is based upon work supported by the National Science Foundation under Grant No. DMS-1440140, while the author was in residence at the Mathematical Sciences Research Institute in Berkeley, California, during the Spring 2017 Semester. }

\author{Michael Lacey}
\address{
School of Mathematics
Georgia Institute of Technology \\
686 Cherry Street
Atlanta, GA 30332-0160}
\email{lacey@math.gatech.edu}
\thanks{Research supported in part by grant   National Science Foundation grant DMS-1600693, and by Australian Research Council grant DP160100153.  This material is based upon work supported by the National Science Foundation under Grant No. DMS-1440140, while the author was in residence at the Mathematical Sciences Research Institute in Berkeley, California, during the Spring 2017 Semester. }
\author{M\'{a}t\'{e} Wierdl}
\address{University of Memphis\\
  Department of Math Sciences\\
  373 Dunn Hall \\
  Memphis, TN 38152}
\email{mwierdl@memphis.edu}

\date{\today}

\begin{abstract}
We introduce  sufficient conditions on discrete singular integral operators for their maximal truncations to
satisfy a \emph{sparse bound}. The latter imply a range of quantitative weighted inequalities, which are new.  
As an application, we prove the following ergodic theorem:
let $p(t)$ be a Hardy field function which grows ``super-linearly" and stays ``sufficiently far" from polynomials. We show that for each measure-preserving system, $(X,\Sigma,\mu,\tau)$, with $\tau$ a measure-preserving $\Z$-action, the modulated one-sided ergodic Hilbert transform
\[ \sum_{n=1}^\infty \frac{e^{2\pi i p(n)}}{n} \tau^n f(x) \]
converges $\mu$-a.e.\ for each $f \in L^r(X), \ 1 \leq r < \infty$.
This affirmatively answers a question of J. Rosenblatt \cite{MR939919}.   

In the second part of the paper, we  establish almost sure sparse bounds for  \emph{random} one-sided ergodic Hilbert,
\[ \sum_{n=1}^\infty \frac{X_n}{n} \tau^n f(x), \]
where $\{ X_n \}$ are uniformly bounded, independent, and mean-zero random variables.

\end{abstract}

\maketitle

\section{introduction}


Our subject is discrete Harmonic Analysis.  We give sufficient conditions for  maximal truncations of  discrete singular integral operators to have
sparse bounds. Our argument has as its antecedents the Fefferman \cite{MR0257819} and Christ \cite{MR951506} $TT^*$ approach to proving weak-type $(1,1)$ bounds for rough singular integral operators on Euclidean space; this approach has already appeared in the discrete context in the work of LaVictoire \cite{MR2576702} where $\ell^1 \to \ell^{1,\infty}$ endpoint estimates for certain (random) maximal functions are proven, and in Urban and Zienkiewicz \cite{MR2318621}
and Mirek \cite{MR3421994}
where endpoint estimates for (deterministic) maximal functions taken over ``thin'' subsets of the integers are established.

We prove \emph{sparse bounds}, which in turn easily imply quantitative weighted bounds, which are novel in this context. 
Moreover, we address maximal truncations, which is also new.  

Sparse bounds are a relatively new topic, and 
we set some notation to describe sparse bounds.  
Say that $ I\subset \mathbb Z $ is an interval if $ I= [a,b] \cap \mathbb Z $ for $ a, b \in \mathbb R $. 
Define 
\begin{equation*}
\langle f \rangle _{I,r} = \Bigl[ \frac 1 {\lvert  I\rvert } \sum_{x\in I} \lvert  f (x)\rvert ^{r}  \Bigr] ^{1/r}.  
\end{equation*}
If $ r=1$, we will frequently write $ \langle f \rangle _{I,r} = \langle f \rangle_I$. 
We say that a collection of intervals $ \mathcal S$ is \emph{sparse} if for all $ S \in \mathcal  S$ there is a set $ E_S \subset S$ so that the collection of sets $ \{E_S \;:\; S\in \mathcal S\}$ are disjoint, and $ \lvert  E_S\rvert \geq \tfrac 14 \lvert  S\rvert  $ for all $ S\in \mathcal S$.  Define \emph{sparse bilinear forms} by 
\begin{equation}\label{e:Lam}
\Lambda _{\mathcal S, r,s} (f,g) = \sum_{S\in \mathcal S} \lvert  S\rvert \langle f \rangle _{S,r} \langle g \rangle _{S,s}, 
\qquad 1\leq r, s < \infty . 
\end{equation}
Given a (sub)linear operator $ T$, we set $ \lVert T  \;:\; (r,s)\rVert$ to be the smallest constant $ C$ so that for all finitely supported functions $ f,g$ there holds 
\begin{equation}\label{e:sparseDef}
\lvert  \langle T f,g \rangle\rvert  \leq C \sup \Lambda _{\mathcal S, r,s} (f,g), 
\end{equation}
where the supremum is over all sparse forms. Sparse operators are  positive localized operators, hence their mapping properties are very easy to analyze.  The following theorem is a remarkable refinement of the familiar fact that the Hilbert transform is weakly bounded.  The theorem below implies all the standard weighted inequalities, as is explained in the references.  

\begin{theorem}\label{t:HilbertSparse}\cites{150105818,MR3521084}
We have $ \lVert  \mathcal H ^{\mathbb Z ,\ast} \;:\;   (1,1)\rVert < \infty $, where 
\begin{equation*}
 \mathcal H ^{\mathbb Z ,\ast} f (x) = \sup _{N} \Bigl\lvert \sum_{ n \;:\; \lvert  n\rvert>N } \frac {f (x-n)}n  \Bigr\rvert. 
\end{equation*}
\end{theorem}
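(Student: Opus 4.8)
\medskip
\noindent\emph{Proof strategy.} The plan is to establish the pointwise sparse domination
\[
\mathcal H ^{\mathbb Z ,\ast} f (x) \;\lesssim\; \sum_{S\in\mathcal S} \langle f\rangle_{3S}\, \mathbf 1_S(x),\qquad x\in\mathbb Z,
\]
for a sparse collection $\mathcal S$ of intervals depending on $f$ (and $g$). Granting this, fix $g\ge 0$ and an interval $Q_0\supseteq\supp f\cup\supp g$ (so $f\mathbf 1_{3Q_0}=f$ and all members of $\mathcal S$ are subintervals of $Q_0$); pairing against $g$ gives $\langle \mathcal H^{\mathbb Z,\ast}f,g\rangle\lesssim \sum_{S\in\mathcal S}|S|\langle f\rangle_{3S}\langle g\rangle_S$, and since $\langle g\rangle_S\le 3\langle g\rangle_{3S}$ and $|S|=\tfrac13|3S|$ this is $\lesssim\sum_{S\in\mathcal S}|3S|\langle f\rangle_{3S}\langle g\rangle_{3S}=\Lambda_{\{3S\},1,1}(f,g)$; the dilated family $\{3S:S\in\mathcal S\}$ is again sparse after the standard decomposition of a Carleson family into boundedly many sparse families. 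The domination itself is obtained by the Lerner--Lacey stopping-time recursion, run on the binary subdivision tree $\mathcal D$ of $Q_0$.

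The engine is a grand maximal truncated operator adapted to $\mathcal H^{\mathbb Z,\ast}$: for $x\in\mathbb Z$ put
\[
\mathcal M_{\mathcal H} f(x) \;=\; \sup_{\substack{I\in\mathcal D\\ I\ni x}}\ \sup_{\xi\in I}\ \sup_{N\ge |I|}\ \Bigl| \sum_{\substack{n\,:\,|n|>N\\ \xi-n\notin 3I}} \frac{f(\xi-n)}{n}\Bigr|.
\]
Using the smoothness of the kernel, $\bigl|\tfrac1n-\tfrac1{n'}\bigr|\lesssim \tfrac{|n-n'|}{\min(|n|,|n'|)^2}$ for $n\sim n'$, one obtains the Cotlar-type pointwise bound $\mathcal M_{\mathcal H} f\lesssim M(\mathcal H^{\mathbb Z,\ast}f)+Mf$, where $M$ is the discrete Hardy--Littlewood maximal operator; combined with the classical weak-type $(1,1)$ inequality for $\mathcal H^{\mathbb Z,\ast}$ itself (the discrete Cotlar inequality) this gives $\mathcal M_{\mathcal H}:\ell^1(\mathbb Z)\to \ell^{1,\infty}(\mathbb Z)$, with norm independent of $Q_0$.

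Now the recursion. Starting from $Q=Q_0$ and iterating: for each interval $Q$ already placed in the tree, set
\[
\Omega_Q = \bigl\{x\in Q:\ M(f\mathbf 1_{3Q})(x)>C\langle f\rangle_{3Q}\bigr\}\cup\bigl\{x\in Q:\ \mathcal M_{\mathcal H}(f\mathbf 1_{3Q})(x)>C\langle f\rangle_{3Q}\bigr\},
\]
with $C$ chosen (via the two weak-type bounds) so that $|\Omega_Q|\le\tfrac12|Q|$, and let $\{Q_j\}$ be the maximal intervals of $\mathcal D$ contained in $Q\cap\Omega_Q$; these cover $\Omega_Q$ exactly since every singleton lies in $\mathcal D$. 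The heart of the matter is the local estimate: for every $x\in Q$,
\[
\mathcal H^{\mathbb Z,\ast}(f\mathbf 1_{3Q})(x)\;\lesssim\; \langle f\rangle_{3Q}\,\mathbf 1_{Q}(x)\;+\;\sum_j\mathcal H^{\mathbb Z,\ast}(f\mathbf 1_{3Q_j})(x)\,\mathbf 1_{Q_j}(x).
\]
On $Q\setminus\Omega_Q$ this is immediate from the definition of $\Omega_Q$ (shrink the competing interval to $\{x\}$, use any truncation level, and absorb the finitely many small-$|n|$ terms into $Mf$). On a child $Q_j$, write $f\mathbf 1_{3Q}=f\mathbf 1_{3Q_j}+f\mathbf 1_{3Q\setminus 3Q_j}$; the first piece contributes exactly $\mathcal H^{\mathbb Z,\ast}(f\mathbf 1_{3Q_j})(x)$. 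For the second, split $3Q\setminus 3Q_j$ into the part outside $3Q_j^{\ast}$ (here $Q_j^{\ast}$ is the parent of $Q_j$ in $\mathcal D$) and the annular collar $3Q_j^{\ast}\setminus 3Q_j$: the outside part, at any truncation level, is one of the quantities defining $\mathcal M_{\mathcal H}(f\mathbf 1_{3Q})$ at a point of $Q_j^{\ast}$ — at an admissible scale because $x\in Q_j^\ast$ and $x-n\notin 3Q_j^\ast$ force $|n|\ge|Q_j^\ast|$ — and $Q_j^\ast\not\subset\Omega_Q$ by maximality, so it is $\lesssim\langle f\rangle_{3Q}$; the collar contributes at most $|3Q_j^\ast|$ terms, each with denominator $|n|\ge|Q_j|$, hence is $\lesssim\langle f\rangle_{3Q_j^\ast}\lesssim\langle f\rangle_{3Q}$ (again because $Q_j^\ast\not\subset\Omega_Q$). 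Crucially, all of this is uniform in the truncation parameter of $\mathcal H^{\mathbb Z,\ast}$. Iterating the local estimate over the stopping tree $\mathcal S$ — which is sparse because $\sum_j|Q_j|=|\Omega_Q|\le\tfrac12|Q|$, so the sets $E_Q:=Q\setminus\bigcup_jQ_j$ are pairwise disjoint with $|E_Q|\ge\tfrac12|Q|\ge\tfrac14|Q|$ — telescopes into the claimed pointwise domination (the recursion terminates since the supports are finite).

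I expect the main obstacle to be precisely this bookkeeping around the maximal truncation: $\mathcal M_{\mathcal H}$ must be designed so that a single evaluation simultaneously dominates the far tails at every admissible truncation level, and one must check that nothing is lost when the truncation parameter drops below the scale of a stopping interval — the geometric identity ``$x\in Q_j,\ x-n\notin 3Q_j\Rightarrow |n|\gtrsim|Q_j|$'' is what rescues this. A technically lighter alternative, which I would fall back on, is transference: the analogous $(1,1)$ sparse bound for the continuous maximal truncated Hilbert transform on $\mathbb R$ is available, the discrete kernel $1/n$ differs from $\int_n^{n+1}\,dt/t$ by an $\ell^1(\mathbb Z)$ kernel (whose contribution is a harmless $Mf$-type term carrying its own sparse bound), and a Calder\'on transference argument then carries the continuous sparse bound down to $\mathbb Z$.
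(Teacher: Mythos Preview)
The paper does not give its own proof of this theorem; it is quoted as a known result, with references. Your argument is correct and is precisely the Lerner--Lacey pointwise sparse-domination scheme from those references: weak-type $(1,1)$ for a grand maximal truncation operator via a Cotlar-type inequality, followed by the stopping-time recursion on a dyadic grid. The bookkeeping you flag around the truncation parameter --- that the geometric constraint $x\in Q_j^{\ast},\ x-n\notin 3Q_j^{\ast}$ forces $|n|\ge |Q_j^{\ast}|$, so the far tail fits the definition of $\mathcal M_{\mathcal H}$ at the parent scale regardless of the nominal truncation level --- is handled correctly, and the passage from $\{3S\}$ back to a sparse family via Carleson packing is standard.
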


Our main theorem of this section concerns the maximal truncations 
\begin{align}\label{e:Ha}
 \HH_a ^{\mathbb Z ,\ast } f := \sup_{N \geq 1} \left| \sum_{n=N}^\infty \frac{a(n)}{n} f(x-n) \right| , 
\end{align}
where $ a \;:\; \mathbb Z \mapsto \{ z \in \mathbb C \;:\; \lvert  z\rvert\leq 1 \}$.  
Hypotheses on $a$ are of course required, and best expressed in terms of 
\begin{equation}\label{e:muj}
\mu_j = \mu^a_j := \sum_{2^{j-1} < n \leq 2^j} \frac{a(n)}{n} \delta_n.
\end{equation}
Let $\tilde{g}(x) := \overline{g}(-x)$ denote complex conjugation and reflection about the origin.

Our main theorem (below) is proved in the following section.

\begin{theorem}\label{t:a} Suppose that we have the inequalities below, valid for some $  \epsilon >0$, 
$ C\geq 1$, and all  $ k  > C $.    
\begin{align}
\label{diagcorr}
 |\mu_j * \tilde{\mu_j}(x)|&  \lesssim 2^{-(1+ \epsilon) j},  \qquad x\neq 0 , 
 \\  \label{e:off}
\lVert  \mu_j * \tilde{\mu_k}\rVert _{\infty } & \lesssim 2^{-\epsilon k - j} 
\qquad      k +C\leq j 
\end{align}
Then, we have the inequalities 
\begin{equation}\label{e:Hrs}
\lVert \mathcal H ^{\mathbb Z ,\ast } _a  \;:\; (1,r)\rVert \lesssim \frac 1 {r-1}, \qquad 1< r < 2.  
\end{equation}
\end{theorem}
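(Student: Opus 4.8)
The plan is to run a $TT^\ast$/Christ-style decomposition adapted to the sparse-bound framework, exploiting the two hypotheses \eqref{diagcorr} and \eqref{e:off} exactly as one uses the ``diagonal'' and ``off-diagonal'' decay of a rough kernel's self-convolutions. First I would linearize the maximal truncation: fix a measurable stopping-time $N(x)$ and write $\mathcal H^{\mathbb Z,\ast}_a f(x) \approx \bigl|\sum_{j \ge j_0(x)} \mu_j \ast f(x)\bigr|$, where $\mu_j$ is as in \eqref{e:muj}; the supremum over $N$ is comparable to a supremum over dyadic scales, so it suffices to bound the maximal dyadic operator $\sup_{J} \bigl|\sum_{j \ge J}\mu_j \ast f\bigr|$. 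The core of the argument is a single-scale estimate: for a fixed interval $I$ of length $\sim 2^J$, one shows that the piece $\sum_{j \le J} \mu_j \ast f$ restricted to $I$, after subtracting its local average, is controlled in $L^2(I)$ by $|I|^{1/2}\langle f\rangle_{I,1}$ with a summable-in-scale gain. This is where \eqref{diagcorr} enters: expanding $\|\mu_j \ast f\|_{L^2}^2 = \langle \mu_j \ast \tilde\mu_j \ast f, f\rangle$, the pointwise bound $|\mu_j\ast\tilde\mu_j(x)| \lesssim 2^{-(1+\epsilon)j}$ for $x \ne 0$ together with the trivial mass bound at $x = 0$ gives $\|\mu_j \ast f\|_{L^2}^2 \lesssim 2^{-\epsilon j}\langle f\rangle_{I,1}\|f\|_{L^1} + (\text{error from }x=0)$, and summing the geometric series in $j$ yields the desired $L^2$ bound with an $\ell^1$-type gain; the off-diagonal hypothesis \eqref{e:off} handles the cross terms $\langle \mu_j \ast f, \mu_k \ast f\rangle$ for $|j-k|$ large, which decay geometrically.

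With the single-scale $L^2$ estimate in hand, the plan is to feed it into the now-standard machinery (as in Lacey's sparse-bound proofs and Conde-Alonso--Culiuc--Di Plinio--Ou) that converts an $L^2$-improving single-scale bound plus a weak-type $L^1$ endpoint into a $(1,r)$ sparse bound for every $r>1$. Concretely: one performs a Calderón--Zygmund stopping-time decomposition of $f$ at a height determined by $\langle f\rangle_I$, iterates on the ``bad'' intervals to build the sparse collection $\mathcal S$, and on each stopping interval $S$ uses the $L^2$ single-scale estimate to dominate the local contribution of $\langle \mathcal H^{\mathbb Z,\ast}_a f, g\rangle$ by $|S|\langle f\rangle_{S,1}\langle g\rangle_{S,r}$. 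The factor $\tfrac{1}{r-1}$ in \eqref{e:Hrs} arises because the $L^2 \to L^{2}$-type estimate must be interpolated down to capture the $g$ in $L^{r}$ with $r$ close to $1$: summing the geometric series governing the ``exceptional'' part of $g$ produces exactly the constant $(r-1)^{-1}$, just as in the weak-$(1,1)$ theory of rough singular integrals where the bound degenerates logarithmically at the endpoint.

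I expect the main obstacle to be the \emph{maximal} truncation rather than the single fixed truncation: controlling $\sup_J$ requires either a square-function estimate for the differences $\sum_{J_1 \le j < J_2}\mu_j \ast f$ uniformly in the endpoints, or an oscillation/variation-norm upgrade of the single-scale bound, so that the stopping-time argument can be run with the supremum already absorbed. The hypotheses \eqref{diagcorr}--\eqref{e:off} are tailored to give $\ell^2$-valued control of the family $\{\mu_j \ast f\}_j$ (they say precisely that $\sum_j \mu_j$ behaves like a single Calderón--Zygmund-type kernel at the level of second moments), so the maximal function should be dominated by the corresponding square function $\bigl(\sum_j |\mu_j\ast f|^2\bigr)^{1/2}$ up to acceptable errors; verifying that this domination is compatible with the sparse/stopping-time bookkeeping — in particular that the square-function pieces localize correctly to the stopping intervals — is the delicate point, and is presumably what the next section carries out in detail.
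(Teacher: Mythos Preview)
Your broad outline --- Calder\'on--Zygmund decomposition of $f$, $TT^\ast$ control via \eqref{diagcorr}--\eqref{e:off}, and a recursive stopping-time argument to build the sparse collection --- matches the paper's framework. But you have correctly identified, and then not resolved, the main obstruction: the maximal truncation. Your proposed fix (dominate by the square function $(\sum_j |\mu_j\ast f|^2)^{1/2}$) does not work here. The hypotheses do \emph{not} give almost-orthogonality of the full family $\{\mu_j\}$ sufficient to control the square function of the bad part with the required decay, and more fundamentally, when $\mu_j$ acts on a bad piece $b_J$ with $|J|\sim 2^{j-s}$, the diagonal bound \eqref{diagcorr} can \emph{fail} to dominate: the convolution $\mu_j\ast\tilde\mu_j$ has its mass at the origin, and if $b_J$ is concentrated one gets only $\|\mu_j\ast b_J\|_2^2 \lesssim |I|^{-1}\|b_J\|_2^2$ with no $\epsilon$-gain in scale. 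Your single-scale estimate thus breaks down precisely on the part of $b$ that matters, and no geometric series in $j$ is available to sum.

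The paper addresses this by splitting the dyadic intervals at each scale into a \emph{standard} collection $\mathcal S_s$ where the diagonal bound \eqref{diagcorr} does dominate (your argument is essentially correct there) and a \emph{non-standard} collection $\mathcal N_s$ where it does not. On $\mathcal N_s$ the key new input is a \emph{Carleson measure} estimate: further stratifying by the density $\|b_{k-s}\mathbf 1_I\|_1/|I|\sim 2^{-t}$, one shows that at most $\sim 2^t$ intervals of $\mathcal N_{s,t}$ can be nested. Organizing these into layers $\mathcal M_u$ by nesting depth, the Rademacher--Menshov inequality reduces the maximal truncation to a uniform $\ell^2$ bound over choices of signs $\sigma_u\in\{-1,0,1\}$, at the cost of a factor $\log u_0\sim t\lesssim s$. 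The off-diagonal hypothesis \eqref{e:off} enters precisely here --- not to handle generic cross terms $\langle \mu_j\ast f,\mu_k\ast f\rangle$ as you suggest, but to establish near-orthogonality between the layers $\beta_u,\beta_v$, which live at genuinely separated scales because $|I|\geq 2^u$ for $I\in\mathcal M_u$. Interpolating the resulting $\ell^2$ bound against the trivial $\ell^1$ endpoint gives decay $2^{-s/(3(r-1))}$, and the sum over $s$ produces the $\tfrac{1}{r-1}$. Your plan has no mechanism for the non-standard collection, and the Rademacher--Menshov/Carleson step is the missing idea.
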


We stress that this result implies immediately not just the new weak (1,1) inequality, but a range of quantitative weighted inequalities, which are also new in this context.  See \cites{CCPO,MR3531367,160901564} for these details, as well as  more background and history of sparse bounds. 
Our theorem has antecedents in the works \cites{160901564,160908701,CCPO}, which include the discrete setting, 
a sparse bound for maximal truncations, and the outline of a general theory of sparse bounds for ``rough'' singular integrals.  
Sparse quickly imply weighted inequalities, which are new in the $ \ell ^{p} $ setting. 

As an application of this maximal theory, we are able to prove pointwise convergence for a class of \emph{modulated} -- that is, oscillatory -- (one-sided) ergodic Hilbert transforms, which we now proceed to describe:

Let $(X,\Sigma,\mu,\tau)$ be a measure-preserving system, i.e.\ a non-atomic $\sigma$-finite measure space, with $\tau$ a measure-preserving $\Z$ action. A celebrated result due to Cotlar \cite{MR0084632} concerns the almost-everywhere convergence of the ergodic Hilbert transform,
\[ H f:= \sum_{n \neq 0} \frac{ \tau^nf}{n}.\]
In particular, Cotlar \cite{MR0084632} established the following theorem.
\begin{theorem}
For any measure-preserving system, and any $f \in L^r(X), \ 1\leq r < \infty$, $Hf$ converges $\mu$-a.e.
\end{theorem}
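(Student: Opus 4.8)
The plan is to derive Cotlar's theorem as a formal consequence of the maximal sparse bound in Theorem~\ref{t:HilbertSparse}, via a standard transference argument from $\mathbb Z$ to the measure-preserving system, followed by the usual Banach-principle-plus-dense-subclass reduction. First I would split $Hf = \lim_{N\to\infty} \sum_{0<\lvert n\rvert\le N} \tau^n f / n$ and observe that the object controlling a.e.\ convergence is the maximal truncation $H^{\ast}f(x) := \sup_N \bigl\lvert \sum_{0<\lvert n\rvert\le N} \tau^n f(x)/n \bigr\rvert$, together with the oscillation/variation of the partial sums. Transference (Calder\'on's method: push the problem to $\mathbb Z$ along orbits $n\mapsto \tau^n x$, average over a long Følner window, and use that $\tau$ preserves $\mu$) upgrades the $\ell^r(\mathbb Z)$-boundedness of $\mathcal H^{\mathbb Z,\ast}$ --- which is immediate from the $(1,1)$ sparse bound of Theorem~\ref{t:HilbertSparse}, since any sparse form $\Lambda_{\mathcal S,1,1}$ is dominated by $\lVert f\rVert_{\ell^p}\lVert g\rVert_{\ell^{p'}}$ for all $1<p<\infty$ and is moreover weak-$(1,1)$ bounded --- to the inequality $\lVert H^{\ast} f\rVert_{L^r(X)} \lesssim_r \lVert f\rVert_{L^r(X)}$ for $1<r<\infty$, and to a weak-type $(1,1)$ bound on $L^1(X)$.

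Next I would establish a.e.\ convergence on a dense subclass. The natural choice is the linear span of $L^\infty\cap L^r$ functions of the form $g - \tau g$ ($g\in L^\infty$) together with the $\tau$-invariant functions; by the mean ergodic theorem (or a direct $L^2$ spectral argument, since $\widehat{\mathbf 1}_{n\le N}/n$ converges boundedly off the trivial frequency) this class is dense in $L^r(X)$ for $1\le r<\infty$. On this class the partial sums telescope or are eventually constant, so $Hf$ converges a.e.\ and in fact everywhere on the dense subclass. Then the Banach principle: the weak-type bound $\mu\{H^{\ast}f > \lambda\} \lesssim \lambda^{-1}\lVert f\rVert_{L^1}$ (and its $L^r$ analogue for $r>1$) lets one pass from a.e.\ convergence on the dense class to a.e.\ convergence on all of $L^r$, for every $1\le r<\infty$, by the familiar $\limsup$-of-differences estimate $\mu\{\,\limsup_{N,M}\lvert S_N f - S_M f\rvert > \lambda\,\} \le \mu\{H^{\ast}(f-h) > \lambda/2\} \lesssim \lambda^{-1}\lVert f - h\rVert$, which can be made arbitrarily small.

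The main obstacle --- really the only nontrivial analytic input --- is the maximal inequality itself, i.e.\ that $\mathcal H^{\mathbb Z,\ast}$ is bounded on $\ell^r(\mathbb Z)$ for $1<r<\infty$ and weak-type $(1,1)$; but this is precisely what the sparse bound in Theorem~\ref{t:HilbertSparse} delivers, so in the present context the proof is essentially a formality. The one point requiring mild care is the transference step for the \emph{maximal} operator over a $\sigma$-finite rather than probability space, and the fact that the supremum over $N$ is an uncountable-looking but effectively countable (only finitely many distinct partial sums below any given scale) object, so that measurability of $H^{\ast}f$ is not an issue. Everything else is bookkeeping: two-sided versus one-sided truncations are handled by noting $\sum_{0<\lvert n\rvert\le N} = \sum_{0<n\le N}(\tau^n - \tau^{-n})$ composed with the kernel $1/n$, both pieces covered by the theorem applied to $f$ and to $f\circ\tau^{-1}$-type reflections.
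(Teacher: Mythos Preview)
Your outline is correct, but you should be aware that the paper does not actually prove this theorem. It is stated as historical background, attributed to Cotlar \cite{MR0084632}, with the remark that Calder\'on \cite{MR0227354} later gave a transference proof and Petersen \cite{MR691275} a direct covering-lemma proof; the paper's own contributions concern the \emph{modulated} operators $\HH_p$ and $\HH_X$, not the classical $H$. So there is no ``paper's own proof'' to compare against here.

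That said, what you wrote is essentially the Calder\'on route the paper alludes to: maximal inequality on $\mathbb Z$, transference to $(X,\mu,\tau)$, then Banach principle plus the dense class of invariants and $L^\infty$-coboundaries. Two small remarks. First, invoking the sparse bound of Theorem~\ref{t:HilbertSparse} to get $\ell^r$-boundedness and weak-$(1,1)$ for $\mathcal H^{\mathbb Z,\ast}$ is valid but anachronistic overkill; those inequalities long predate sparse domination and are what Cotlar and Calder\'on actually used. Second, note that $\mathcal H^{\mathbb Z,\ast}$ in the paper is the \emph{tail} maximal operator $\sup_N\lvert\sum_{\lvert n\rvert>N}\rvert$, not the partial-sum maximal operator you want; you should say explicitly that the two differ by the full Hilbert transform (bounded on $\ell^r$ and weak-$(1,1)$), so control of one gives control of the other. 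Your dense-class computation is fine: on invariants the symmetric partial sums vanish identically, and on coboundaries $g-\tau g$ with $g\in L^\infty$ an Abel summation shows uniform convergence; ``telescope'' is a slight overstatement but the conclusion stands.
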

Since Cotlar's result, Calder\'{o}n \cite{MR0227354} showed how to transfer analogous convergence results for the real-variable Hilbert transform to the ergodic setting, offering another proof; Petersen \cite{MR691275} has since offered an especially direct proof, using clever covering arguments. In all instances, the cancellation condition
\[ \lim_{N \to \infty} \sum_{0<|n| \leq N} \frac{1}{n} = 0 \]
played a crucial role in the arguments.

Prior to Cotlar's result, a one-sided variant of the ergodic Hilbert transform was introduced by Izumi \cite{MR0000906}:
\[ H_1 f  := \sum_{n=1}^\infty \frac{1}{n} \tau^n f.\]
 Izumi conjectured that for $f \in L^2(X)$ with $\int f = 0$, $H_1f$ would converge almost everywhere.
Unfortunately, Halmos \cite{MR0030708} proved that on any (non-atomic) probability space there always exists mean-zero $f \in L^2(X)$ for which $H_1f$ fails to converge even in the $L^2$ norm. In fact, Dowker and Erd\"{o}s \cite{MR0102584} exhibited mean-zero $f \in L^\infty(X)$ so that 
\[ H_1^* f := \sup_{N \geq 1} \left| \sum_{n=1}^N \frac{1}{n} \tau^n f \right| = + \infty \]
almost everywhere. 

Our Theorem \ref{t:a} will allow us to prove pointwise convergence for ``twisted'' variants of the ergodic Hilbert transform,
\[ \HH_p f := \sum_{n=1}^\infty \frac{e(p(n))}{n} \tau^n f,\]
in the range $1 \leq p < \infty$.
Here and throughout we let $e(t):=e^{2\pi i t}$ denote the complex exponential, and $p(t)$ is taken to be a real-valued Hardy-field function, which grows super-linearly, in a quantifiable way, and stays sufficiently far from the class of polynomials. Good examples of such functions are fractional monomials, $p(t) := t^c$, for non-integer $c > 1$. The presence of the phase $n \mapsto p(n)$ introduces an element of ``randomness'' into the sequence $\{ \frac{e(p(n))}{n} \}$. Indeed, although this sequence is not absolutely summable, a brief argument involving summation by parts and van der Corput's lemma (see below) shows that 
\[ \sum_{n=1}^\infty \frac{e(p(n))}{n} \]
converges conditionally.

Our main result in this direction is the following theorem.
\begin{theorem}\label{main}
Suppose that $p$ is an ``admissible" Hardy field function. Then for any $f \in L^r(X), \ 1 \leq r < \infty$, $\HH_pf$ converges $\mu$-a.e.
\end{theorem}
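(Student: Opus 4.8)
The plan is to derive Theorem~\ref{main} from the maximal inequality packaged by Theorem~\ref{t:a}, applied to the bounded symbol $a(n):=e(p(n))$, after a standard transference step; all the substance then lies in verifying the two correlation hypotheses \eqref{diagcorr}--\eqref{e:off} for this particular $a$. Concretely, I would first pass to the model system, the integer shift on $\Z$: by the Calder\'on transference principle it suffices to prove that $\HH_a^{\Z,\ast}$ from \eqref{e:Ha} is bounded on $\ell^q(\Z)$ for all $1<q<\infty$ and of weak type $(1,1)$, together with a.e.\ convergence on $\Z$ of the tails $\sum_{n\ge N}\tfrac{a(n)}{n}g(\cdot-n)$ for $g$ in a dense subclass. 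The convergence is trivial for finitely supported $g$ (finite sums), and the maximal bound then upgrades it to all of $\ell^q(\Z)$; transferring the maximal inequality and the convergence statement back to $(X,\Sigma,\mu,\tau)$ yields a.e.\ convergence of $\sum_{n=1}^{N}\tfrac{a(n)}{n}\tau^n f$ for $f\in L^r(X)$, $1\le r<\infty$ (the $r=1$ endpoint using the weak-type bound, with conditional convergence of $\sum_n \tfrac{e(p(n))}{n}$ along arithmetic progressions -- itself a summation-by-parts plus van der Corput computation -- supplying a.e.\ convergence on a dense subclass in the ergodic setting).

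So everything reduces to the $\ell^q$ maximal inequality, and for that I would invoke Theorem~\ref{t:a}: it suffices to check \eqref{diagcorr} and \eqref{e:off} for $\mu_j=\sum_{2^{j-1}<n\le 2^j}\tfrac{e(p(n))}{n}\delta_n$, since Theorem~\ref{t:a} then yields the sparse bound $\lVert\HH_a^{\Z,\ast}:(1,r)\rVert\lesssim\tfrac1{r-1}$ for $1<r<2$, which implies $\ell^q(\Z)$-boundedness for every $1<q<\infty$ (letting the sparse exponent tend to $1$), and the weak-type $(1,1)$ estimate comes from the $TT^{\ast}$-type argument underlying Theorem~\ref{t:a}. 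Unwinding the convolutions, with $\phi_x(t):=p(t+x)-p(t)$ one has for $x\ne 0$
\begin{gather*}
\mu_j * \tilde{\mu_j}(x)=\sum_{2^{j-1}<m,\ m+x\le 2^j}\frac{e(\phi_x(m))}{m(m+x)},\\
\mu_j * \tilde{\mu_k}(x)=\sum_{2^{k-1}<m\le 2^k,\ 2^{j-1}<m+x\le 2^j}\frac{e(\phi_x(m))}{m(m+x)}.
\end{gather*}
Since the weights are $\approx 2^{-2j}$ (resp.\ $\approx 2^{-j-k}$) and slowly varying, summation by parts reduces both estimates to a single kind of bound: a \emph{power-saving estimate $\bigl|\sum_{m\in I}e(\phi_x(m))\bigr|\lesssim|I|^{1-\epsilon'}$ for the exponential sum of the difference function, uniform in the shift $x$} -- over intervals $I$ of length $\approx 2^j$ with $1\le|x|\le 2^j$ in the diagonal case, and over intervals of length $\approx 2^k$ with $|x|\approx 2^j$ and $j\ge k+C$ in the off-diagonal case.

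This is where the Hardy-field hypotheses enter. Because $p$ lies in a Hardy field, $\phi_x$ and its derivatives are eventually monotone and comparable to explicit regularly-varying functions. For the motivating case $p(t)=t^{c}$ with $1<c<2$ both bounds follow from van der Corput's second-derivative test applied to $\phi_x''(t)=c(c-1)\bigl[(t+x)^{c-2}-t^{c-2}\bigr]$: in the diagonal range this has size $\approx|x|\,2^{j(c-3)}$, and in the off-diagonal range -- where, once $C$ is large, the shift $x$ dominates the summation scale and $p''$ is decreasing -- it has size $\approx 2^{k(c-2)}$, so in either case the relevant parameter $\lambda$ is a favourable power of $2^{j}$ or $2^{k}$ and $|I|\lambda^{1/2}+\lambda^{-1/2}\lesssim|I|^{1-\epsilon'}$. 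For a general admissible $p$ the second-derivative test is insufficient and one iterates van der Corput / performs Weyl differencing on $\phi_x$; here the ``super-linear growth'' hypothesis keeps the appropriate derivative of $\phi_x$ in the range where differencing gains, while the ``far from polynomials'' hypothesis is exactly what rules out the arithmetic resonances that would otherwise defeat the saving -- were $p$ an honest polynomial, $\phi_x$ would be a polynomial of one lower degree and the exponential sum could be of full size $|I|$.

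The main obstacle is therefore Step~3: distilling from the qualitative Hardy-field axioms a single quantitative exponential-sum estimate for $p(\cdot+x)-p(\cdot)$ that holds \emph{uniformly in the shift $x$ over the full range of dyadic scales}, with the power saving $2^{-\epsilon' j}$ (resp.\ $2^{-\epsilon' k}$) surviving uniformly, and -- relatedly -- formulating ``admissible'' so that it is wide enough to include $t^{c}$ for all non-integer $c>1$ yet strong enough to run the iterated derivative tests. Once \eqref{diagcorr} and \eqref{e:off} are established, Theorem~\ref{t:a} together with the transference of the first paragraph completes the proof.
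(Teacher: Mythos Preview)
Your verification of the correlation hypotheses \eqref{diagcorr} and \eqref{e:off} via summation by parts and van der Corput is exactly the paper's Lemma~\ref{check}, and the deduction of the maximal inequality on $\Z$ from Theorem~\ref{t:a}, followed by Calder\'on transference, is correct. The gap is in the a.e.\ convergence step.

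Calder\'on transference moves \emph{norm inequalities} (maximal, weak-type, $L^p$) from $\Z$ to $(X,\mu,\tau)$; it does not transfer pointwise convergence. Your argument on $\Z$ --- trivial convergence for finitely supported $g$, upgraded to all of $\ell^q(\Z)$ by the maximal bound --- is fine, but there is no mechanism to push this to a general system: for $f\in L^\infty(X)$ the orbit sequence $n\mapsto f(\tau^n x)$ lies only in $\ell^\infty(\Z)$, not in any $\ell^q$ with $q<\infty$, so the $\ell^q$ convergence result on $\Z$ simply does not apply to it. Your fallback, ``conditional convergence of $\sum_n e(p(n))/n$ along arithmetic progressions supplying a dense subclass in the ergodic setting,'' would only handle functions periodic under $\tau$, and these are not dense in a general system; the coboundary route (functions $g-\tau g$) also fails here because the increments $e(p(n))/n - e(p(n-1))/(n-1)$ are not absolutely summable when $p$ is super-linear. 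So you still owe a.e.\ convergence on a genuinely dense subclass of $L^r(X)$.

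The paper closes this gap with an $L^2$ summability argument. The same van der Corput analysis you carried out, redone at $(1+\kappa)$-adic rather than dyadic scales (this is Lemma~\ref{check1}), gives
\[
\bigl\lVert \mu_{j,\kappa}^p \ast g \bigr\rVert_{\ell^2}\lesssim(1+\kappa)^{-\epsilon j}\lVert g\rVert_{\ell^2};
\]
this \emph{inequality} does transfer, so for bounded $g$ on $X$ one obtains
\[
\sum_{j\ge 0}\Bigl\lVert \sum_{n>(1+\kappa)^j}\tfrac{e(p(n))}{n}\tau^n g\Bigr\rVert_{L^2(X)}^2<\infty,
\]
hence these tails tend to zero $\mu$-a.e. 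A standard lacunary-to-full interpolation (the paper invokes \cite{MR1325697}*{Lemma~1.5}), combined with the maximal inequality, upgrades this to a.e.\ convergence along all $N$ for bounded $g$; density plus the weak-type bound then finishes all of $L^r(X)$, $1\le r<\infty$. Note that your uniform power-saving estimate $\bigl|\sum_{m\in I}e(\phi_x(m))\bigr|\lesssim|I|^{1-\epsilon'}$ over arbitrary intervals already contains the $(1+\kappa)$-adic estimate, so the repair costs nothing new analytically --- you just need to route the argument through $L^2$ summability on $X$ rather than attempting to transfer pointwise convergence from $\Z$.
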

We defer the definition of ``admissibile" to our section below on Hardy field functions. 

We will prove Theorem \ref{main} by establishing pointwise convergence for simple functions $f: X \to \C$ and by noting the following proposition, which is an immediate consequence of Calder\'{o}n's transference principle and our Theorem \ref{t:a}.

\begin{theorem}\label{mainP}
For $p$ as above, the maximally truncated modulated Hilbert transform,
\[ \HH_p^*f := \sup_{N \geq 1} \left| \sum_{n=N}^{\infty} \frac{e(p(n))}{n} \tau^nf \right| \]
is bounded on $L^r(X)$ for $1 < r < \infty$, and maps $L^{1}(X) \to L^{1,\infty}(X)$.
\end{theorem}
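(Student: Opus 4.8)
\textbf{Proof proposal for Theorem \ref{mainP}.}
The plan is to reduce to the integer shift model via transference and then invoke Theorem \ref{t:a}. Set $a(n) := e(p(n))$ for $n \ge 1$ and $a(n) := 0$ otherwise. By Calder\'on's transference principle it suffices to prove that the integer maximal operator $\HH^{\Z,\ast}_a$ of \eqref{e:Ha} is bounded on $\ell^r(\Z)$ for every $1 < r < \infty$ and maps $\ell^1(\Z) \to \ell^{1,\infty}(\Z)$; since the supremum over truncations $N$ already sits inside $\HH^{\Z,\ast}_a$, transference applies directly to the maximal object with no extra work. For the $\ell^p$-theory of $\HH^{\Z,\ast}_a$ we appeal to Theorem \ref{t:a}: granting its two hypotheses \eqref{diagcorr} and \eqref{e:off} for the measures $\mu_j = \mu_j^a$, the sparse estimate $\lVert \HH^{\Z,\ast}_a : (1,r)\rVert \lesssim (r-1)^{-1}$ of \eqref{e:Hrs} holds, and then the standard sparse-domination calculus (see \cites{CCPO,MR3531367,160901564}) converts it into strong $\ell^p$ bounds for $r < p < r'$ — hence, letting $r \downarrow 1$, for every $1 < p < \infty$ — while the quantitative blow-up rate $(r-1)^{-1}$ self-improves to the endpoint weak-type $\ell^1 \to \ell^{1,\infty}$ inequality.

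Thus everything comes down to verifying \eqref{diagcorr} and \eqref{e:off} for $a(n) = e(p(n))$, both of which are estimates on weighted exponential sums. Expanding the convolutions,
\[
\mu_j \ast \tilde\mu_j(x) \;=\; \sum_{n,\, n-x \,\in\, (2^{j-1},2^j]} \frac{e\big(p(n)-p(n-x)\big)}{n(n-x)}, \qquad
\mu_j \ast \tilde\mu_k(x) \;=\; \sum_{\ell \in (2^{k-1},2^k],\ \ell+x \in (2^{j-1},2^j]} \frac{e\big(p(\ell+x)-p(\ell)\big)}{(\ell+x)\,\ell},
\]
so in the first sum both arguments are $\asymp 2^j$ and the amplitude is a slowly varying weight of size $\asymp 2^{-2j}$, while in the second the amplitude is $\asymp 2^{-j-k}$ and the lag satisfies $|x| \asymp 2^j$. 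In each case Abel summation strips off the smooth amplitude and reduces the claim to a uniform bound on the un-weighted partial sums, namely $\big| \sum_{2^{j-1} < n \le M} e(p(n)-p(n-x)) \big| \lesssim 2^{(1-\epsilon)j}$ for all $x\neq 0$ and $M \le 2^j$, which feeds \eqref{diagcorr} after multiplication by $2^{-2j}$, and an analogous bound $\lesssim 2^{(1-\epsilon)k}$ for the partial sums of $e(p(\ell+x)-p(\ell))$ over $\ell \in (2^{k-1}, M']$, which feeds \eqref{e:off} after multiplication by $2^{-j-k}$. (The conditional convergence of $\sum_n e(p(n))/n$ flagged in the introduction is precisely the one-scale, $x$-free instance of this mechanism.)

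These partial-sum bounds are where the admissibility hypotheses on the Hardy field function $p$ enter, via van der Corput's lemma. For the diagonal term, the phase $\phi(n) = p(n) - p(n-x)$ has $\phi^{(\ell)}(n) = p^{(\ell)}(n) - p^{(\ell)}(n-x) \asymp x\, p^{(\ell+1)}(\xi_n)$ for an intermediate point $\xi_n \asymp 2^j$, using the eventual monotonicity and regular-variation behaviour of the derivatives of $p$; because $p$ stays quantitatively far from the class of polynomials, some derivative $p^{(\ell+1)}$ is genuinely comparable to a (non-integer) power on the dyadic block and in particular does not degenerate or resonate there. Choosing $\ell$ so that $|\phi^{(\ell)}|$ lands in the admissible window, the $\ell$-th derivative van der Corput estimate yields the power saving $2^{-\epsilon j}$. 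For the off-diagonal term the same analysis applies to $\psi(\ell) = p(\ell + x) - p(\ell)$ on the short block $\ell \in (2^{k-1}, 2^k]$, now with $\ell + x \asymp 2^j$, so $\psi^{(s)}(\ell) \asymp p^{(s)}(\ell+x)$ is large; the first- or second-derivative test, exploiting the separation of scales $2^k \ll 2^{j-C}$, produces the gain $2^{-\epsilon k}$.

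The main obstacle is uniformity in $x$ in the diagonal estimate \eqref{diagcorr}: when $|x|$ is very small relative to $2^j$ the phase $\phi(n) = p(n) - p(n-x)$ is nearly flat, the low-order derivative tests are too weak, and one is forced up to a higher-order derivative test — it is exactly here that the full suite of admissibility conditions (simultaneous control of several derivatives of $p$, together with the quantitative separation from polynomials that prevents any derivative from mimicking a polynomial and killing the gain) must be used to still extract a single $\epsilon > 0$ valid across the whole range $x \neq 0$, $M \le 2^j$. By contrast the off-diagonal estimate \eqref{e:off} is comparatively benign, since the lag $|x| \asymp 2^j$ dominates the length $2^k$ of summation. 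Once \eqref{diagcorr} and \eqref{e:off} are established, the remainder is the black box of Theorem \ref{t:a} together with the sparse-to-$\ell^p$ and sparse-to-weak-$(1,1)$ machinery above, followed by transference.
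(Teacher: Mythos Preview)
Your proposal is correct and follows the paper's route exactly: Calder\'on transference reduces Theorem~\ref{mainP} to the integer model, the hypotheses \eqref{diagcorr} and \eqref{e:off} of Theorem~\ref{t:a} are verified for $a(n)=e(p(n))$ via summation by parts and van der Corput (this is precisely Lemma~\ref{check}, using Lemma~\ref{l:vdc}), and the sparse bound \eqref{e:Hrs} then yields the $\ell^r$ and weak-$(1,1)$ conclusions. Two small corrections to your informal analysis, neither of which affects the strategy: in the off-diagonal case the dominant contribution to $\psi^{(s)}(\ell)=p^{(s)}(\ell+x)-p^{(s)}(\ell)$ is $p^{(s)}(\ell)$ rather than $p^{(s)}(\ell+x)$ (the relevant derivative has negative power growth, so the smaller argument wins), and in the diagonal case the paper does not adapt the order of the van der Corput test to $|x|$ --- for $p$ of type $m+\alpha$ the $(m+1)$-st derivative test handles all $x\neq 0$ uniformly.
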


In particular, we are able to give an elegant answer to  the following question of Rosenblatt \cite{MR939919}.    

\begin{problem}\label{Ros}
Does there exist a sequence $\{ c_n\}$ with $\sum_{n=1}^\infty |c_n| = \infty$ such that for $\tau : X \to X$ a measure-preserving transformation, and $f \in L^1(X)$, the series $\sum_{n=1}^\infty c_n \tau^n f(x)$ converges for almost every $x$?
\end{problem}

See however the earlier solutions by Demeter \cite{MR2138874} and Cuny \cites{ MR2122913, MR2122914}, which are carefully constructed perturbations of the Hilbert transform kernel.

Finally, we consider \emph{random} one-sided Hilbert transforms.

\begin{theorem}\label{mainR}
Let $\{ X_n \}$ be  collection of uniformly  bounded, independent,  mean-zero random variables. 
Define 
\[ \HH_X^{\mathbb{Z},*} f := \sup_{ N \geq 1} \left| 
\sum_{n=N}^{\infty} \frac{X_n}{n} f(x-n) \right| .\]
Almost surely, there holds 
\[ \lVert \mathcal H ^{\mathbb Z ,\ast } _X  \;:\; (1,r)\rVert \lesssim_\omega \frac 1 {r-1}, \qquad 1< r < 2. \]
\end{theorem}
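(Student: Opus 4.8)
The plan is to derive Theorem~\ref{mainR} from Theorem~\ref{t:a} applied pathwise. Fix a realization $\omega$ and set $a(n)=a^{\omega}(n):=X_{n}(\omega)$, so that $\HH^{\Z,*}_{X}=\HH^{\Z,*}_{a^{\omega}}$ and the associated measures are $\mu_{j}=\mu^{\omega}_{j}=\sum_{2^{j-1}<n\le 2^{j}}\tfrac{X_{n}(\omega)}{n}\delta_{n}$. Since $\{X_{n}\}$ is uniformly bounded, $a^{\omega}$ takes values in a fixed disc, so it suffices to show that, almost surely, the hypotheses \eqref{diagcorr} and \eqref{e:off} hold for some random $\epsilon>0$ and threshold $C=C(\omega)$; the implied constants then become $\omega$-dependent, which accounts for the $\lesssim_{\omega}$ in the conclusion (the constant produced by Theorem~\ref{t:a} depends only on $\epsilon$, $C$, and the constants in \eqref{diagcorr}--\eqref{e:off}). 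The engine for both estimates is a martingale concentration inequality together with a Borel--Cantelli argument.

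For the diagonal bound \eqref{diagcorr}, fix $j$ and $x>0$ (the case $x<0$ follows from $\mu^{\omega}_{j}*\widetilde{\mu^{\omega}_{j}}(-x)=\overline{\mu^{\omega}_{j}*\widetilde{\mu^{\omega}_{j}}(x)}$), and write
\[
\mu^{\omega}_{j}*\widetilde{\mu^{\omega}_{j}}(x)=\sum_{n\in E_{j,x}}\frac{X_{n}\overline{X_{n-x}}}{n(n-x)},
\qquad
E_{j,x}:=\{\,n \;:\; 2^{j-1}<n\le 2^{j},\ 2^{j-1}<n-x\le 2^{j}\,\}.
\]
Ordering the summands by $n$ and using $\mathbb{E}[X_{n}\mid X_{1},\dots,X_{n-1}]=0$ together with the fact that $X_{n-x}$ is measurable with respect to $X_{1},\dots,X_{n-1}$ (here $x>0$), we see that these summands form a martingale-difference sequence; there are $|E_{j,x}|=O(2^{j})$ of them, each of size $O(2^{-2j})$, so their squared $\ell^{\infty}$-norms sum to $O(2^{-3j})$. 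Azuma--Hoeffding then gives $\mathbb{P}\bigl(|\mu^{\omega}_{j}*\widetilde{\mu^{\omega}_{j}}(x)|>2^{-(1+\epsilon)j}\bigr)\lesssim\exp\bigl(-c\,2^{(1-2\epsilon)j}\bigr)$. As there are $O(2^{j})$ relevant shifts $x$, a union bound followed by a sum over $j$ converges for every fixed $\epsilon<\tfrac12$, and Borel--Cantelli yields a random scale beyond which \eqref{diagcorr} holds; the finitely many remaining scales are absorbed into the constant.

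The off-diagonal bound \eqref{e:off} is the crux. For $j\ge k+C$ with $C$ a fixed constant, the only shifts with $\mu^{\omega}_{j}*\widetilde{\mu^{\omega}_{k}}(x)\ne 0$ satisfy $x\asymp 2^{j}$, and there
\[
\mu^{\omega}_{j}*\widetilde{\mu^{\omega}_{k}}(x)=\sum_{2^{k-1}<m\le 2^{k}}\ \frac{X_{x+m}\overline{X_{m}}}{(x+m)m},
\]
which, reindexed by the larger index $x+m$, is again a sum of martingale differences: now $O(2^{k})$ of them, each of size $O(2^{-j-k})$. Azuma--Hoeffding gives $\mathbb{P}\bigl(|\mu^{\omega}_{j}*\widetilde{\mu^{\omega}_{k}}(x)|>2^{-\epsilon k-j}\bigr)\lesssim\exp\bigl(-c\,2^{(1-2\epsilon)k}\bigr)$ for each shift, so a union bound over the $O(2^{j})$ shifts is summable in $(j,k)$ exactly in the range where $2^{(1-2\epsilon)k}$ dominates $j$ — i.e.\ $j$ at most a fixed exponential in $k$ — and there Borel--Cantelli again gives \eqref{e:off}. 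The genuine obstacle is the complementary regime $j\gg 2^{k}$: there the length-$2^{k}$ block $(X_{m})_{2^{k-1}<m\le 2^{k}}$ essentially recurs among the $\asymp 2^{j}$ high indices, so for exceptional $x$ the signs $X_{x+m}\overline{X_{m}}$ all align and the cancellation behind the gain $2^{-\epsilon k}$ is lost; one is left only with the deterministic bound $\lVert\mu^{\omega}_{j}*\widetilde{\mu^{\omega}_{k}}\rVert_{\infty}\lesssim 2^{-j}\sum_{m\asymp 2^{k}}\tfrac1m\lesssim 2^{-j}$. To close the argument in that regime one must supplement the pointwise estimates with the almost-sure $\ell^{2}\to\ell^{2}$ near-orthogonality of the operators with kernels $\mu^{\omega}_{j}$, which follows from Salem--Zygmund bounds $\lVert\widehat{\mu^{\omega}_{j}}\rVert_{L^{\infty}(\TT)}\lesssim_{\omega}\sqrt{j}\,2^{-j/2}$ for the random trigonometric polynomials $\widehat{\mu^{\omega}_{j}}(\theta)=\sum_{2^{j-1}<n\le 2^{j}}\tfrac{X_{n}}{n}e(n\theta)$; this is precisely the input the $TT^{*}$ proof of Theorem~\ref{t:a} uses for well-separated scales. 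This interplay between the deterministic far-scale bound and the random recurrence of short blocks is where the main work lies.
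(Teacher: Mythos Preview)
Your diagonal estimate is fine, and you correctly locate the obstruction: for $j\gg 2^{k}$ the block $(X_m)_{2^{k-1}<m\le 2^k}$ does recur among the high indices, and for exceptional shifts $x$ the pointwise bound $|\mu_j*\widetilde{\mu_k}(x)|\lesssim 2^{-\epsilon k-j}$ genuinely fails. The paper says this explicitly: ``$\HH_X^{\mathbb Z,*}$ is too singular to fall under the purview of Theorem~\ref{t:a}; in particular, \eqref{e:off} in general fails.'' So the plan of verifying \eqref{diagcorr}--\eqref{e:off} and invoking Theorem~\ref{t:a} as a black box cannot succeed.

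Your proposed patch --- replace the missing pointwise bound by the Salem--Zygmund estimate $\|\widehat{\mu_j^\omega}\|_{L^\infty(\TT)}\lesssim_\omega\sqrt{j}\,2^{-j/2}$ and feed that into the $TT^*$ argument --- does not work as stated. The off-diagonal step in the proof of Lemma~\ref{l:N2} (see \eqref{e:bO}) is not an $\ell^2\!\to\!\ell^2$ operator-norm estimate; it is a pointwise bound on the kernel $\widetilde{\mu_{k_u}}*\mu_{k_v}$, paired against the $\ell^1$ norms $\|b_{k_u-s}\mathbf 1_I\|_1$ and $\|b_{k_v-s}\mathbf 1_J\|_1$. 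An $\ell^2$ operator bound would force you to $\|b_{k-s}\mathbf 1_I\|_2$, which (via $\|b_{k-s}\|_\infty\lesssim 2^{k-s}$) costs an extra factor $|I|^{1/2}$ per interval and destroys the summation over $I\subset J$ and over scales. What the paper actually does is prove a structural refinement (Lemma~\ref{l:mu}): almost surely the pointwise bound $|\widetilde{\mu_i}*\mu_j|\lesssim 2^{-i/4-j}$ holds off an exceptional set $Z_{i,j}\subset[0,2^j)$ that is covered by at most $C\,e^{-c_0 2^{i/2}}2^{j-i}$ dyadic intervals of length $2^i$. One then reruns the proof of Lemma~\ref{l:N2}, splitting each inner product $\langle\beta_u,\beta_v\rangle$ into a ``Main'' part (on the complement of $Z_{k_u,k_v}$, handled exactly as in \eqref{e:bO} with $\epsilon=\tfrac14$) and an ``Exceptional'' part (on $Z_{k_u,k_v}$, where one uses only the trivial bound $|\widetilde{\mu_i}*\mu_j|\lesssim 2^{-j}$ together with $\|b_{k-s}\|_\infty\lesssim 2^{k-s}$ and the smallness of $Z_{k_u,k_v}$). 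The super-exponential decay $e^{-c_0 2^{i/2}}$ is what makes the exceptional contribution summable. This is the missing idea in your proposal.
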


By the work of Rosenblatt, and Calder\'{o}n's transference principle, this immediately implies the following corollary.
\begin{cor}
Almost surely, for any $f \in L^1(X)$, for any $(X,\Sigma,\mu,\tau)$ measure preserving system, $\sum_{n=1}^\infty \frac{X_n}{n} \tau^n f$ converges $\mu$-a.e.
\end{cor}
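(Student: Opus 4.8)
The plan is to verify that, almost surely, the random kernel $a(n) = X_n$ satisfies the hypotheses \eqref{diagcorr} and \eqref{e:off} of Theorem \ref{t:a}, so that \eqref{e:Hrs} applies directly. With the notation $\mu_j = \mu^X_j = \sum_{2^{j-1} < n \le 2^j} \frac{X_n}{n}\delta_n$, the key objects are the convolution kernels $\mu_j * \tilde\mu_k(x) = \sum_{n-m = x} \frac{X_n \overline{X_m}}{nm}$, where $n$ ranges over the $j$-th dyadic block and $m$ over the $k$-th. When $j = k$ and $x \ne 0$, or when $k + C \le j$ (so the two blocks are disjoint and the pairs $(n,m)$ contributing to a fixed $x$ are distinct indices), the summands are independent, mean-zero random variables, because the $X_n$ are independent and mean-zero; hence $\EE[\mu_j * \tilde\mu_k(x)] = 0$ for these ranges. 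The strategy is therefore a concentration-of-measure argument: show each such random sum is small with overwhelming probability, then union-bound over the polynomially-many values of $x$, $j$, $k$ in play, and finally invoke Borel--Cantelli.

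The main steps, in order. First, fix $j,k,x$ in one of the two admissible ranges and write $\mu_j * \tilde\mu_k(x) = \sum_\ell Y_\ell$ where $Y_\ell = \frac{X_{n_\ell}\overline{X_{m_\ell}}}{n_\ell m_\ell}$ are independent (across $\ell$), mean-zero, and bounded by roughly $2^{-j-k}$ in the off-diagonal case, $2^{-2j}$ in the diagonal case, with at most $O(2^{\max(j,k)})$ terms. Apply a Bernstein/Hoeffding-type inequality (or, for sharper control of the exponent, Chernoff with the moment generating function of a bounded variable) to get $\PP\big(|\mu_j * \tilde\mu_k(x)| > \lambda\big) \le 2\exp(-c\lambda^2 / (\text{variance} + \text{max}\cdot\lambda))$. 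Second, choose $\lambda$ slightly larger than the target bound — $\lambda \sim 2^{-(1+\epsilon)j}$ in the diagonal case, $\lambda \sim 2^{-\epsilon k - j}$ in the off-diagonal case — with a small enough $\epsilon > 0$ that the variance term, of size $\sim 2^{-2j}\cdot 2^{j} = 2^{-j}$ on the diagonal and $\sim 2^{-j-2k}$ (or $2^{-2j-k}$) off-diagonal, still makes the exponent $\gtrsim 2^{c' j}$ or $\gtrsim 2^{c' k}$ for some $c' > 0$; this forces $\epsilon$ to be chosen after balancing against the number of events. Third, union-bound: there are $O(j)$ values of $k \le j$, and $O(2^j)$ relevant values of $x$ (those with $|x| \lesssim 2^j$), so the total number of events at scale $j$ is $O(j\,2^j)$, which is crushed by the $\exp(-2^{c'j})$ tail; summing over $j$ gives a finite bound, and Borel--Cantelli yields that almost surely \eqref{diagcorr} and \eqref{e:off} hold for all but finitely many $j$. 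Fourth, absorb the finitely many exceptional scales into the implied constant $\lesssim_\omega$ — each such $\mu_j$ is a fixed finite kernel, contributing a bounded operator — and conclude by Theorem \ref{t:a}.

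The main obstacle I anticipate is the bookkeeping in Step 2: one must track the variance and maximal-term sizes carefully enough in each of the two ranges to see that a single choice of $\epsilon > 0$ (uniform in $j$) makes the subexponential tail beat the polynomial count of events, and in particular to confirm the off-diagonal exponent really does decay in $k$ (not just in $j$) so that the hypothesis $\lVert \mu_j * \tilde\mu_k\rVert_\infty \lesssim 2^{-\epsilon k - j}$ is obtained with the correct $k$-dependence. A secondary subtlety is that \eqref{diagcorr} and \eqref{e:off} must hold simultaneously for \emph{all} pairs with $k + C \le j$ and all $x$, so the union bound has to be organized as a single sum over all admissible triples; this is routine once the per-event tail is genuinely doubly-exponential in the scale, but it does require the boundedness of the $X_n$ (not merely finite variance) to run the Chernoff estimate. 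Finally, one should note that independence of the summands $Y_\ell$ does require $x \ne 0$ in the diagonal case — when $x = 0$ the term is $\sum_n |X_n|^2/n^2$, which is genuinely of size $\sim 2^{-j}$, consistent with \eqref{diagcorr} being stated only for $x \ne 0$.
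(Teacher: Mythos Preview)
Your approach has a genuine gap: condition \eqref{e:off} does \emph{not} hold almost surely for the random kernel, and the paper says so explicitly in the introduction (``$\HH_X^{\mathbb{Z},*}$ is too singular to fall under the purview of Theorem~\ref{t:a}; in particular, \eqref{e:off} in general fails''). The error is in your Step~3, where you claim the tail is $\exp(-2^{c'j})$. For fixed $k+C\le j$ and fixed $x$, the sum $\mu_j * \tilde\mu_k(x)$ has at most $O(2^k)$ independent terms, each bounded by $O(2^{-j-k})$, so the Hoeffding/Bernstein exponent is $\sim \lambda^2 2^{2j+k}$; with $\lambda = 2^{-\epsilon k - j}$ this gives a tail $\exp(-c\,2^{(1-2\epsilon)k})$, which depends on the \emph{smaller} scale $k$, not on $j$. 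But the support of $\mu_j * \tilde\mu_k$ has size $\sim 2^j$, so the union bound over $x$ produces $2^j \exp(-c\,2^{(1-2\epsilon)k})$, and this diverges when summed over $j \ge k + C$. Equivalently, the maximum of $\sim 2^j$ subgaussians of standard deviation $\sim 2^{-j-k/2}$ is of order $2^{-j-k/2}\sqrt{j}$, which exceeds $2^{-\epsilon k - j}$ once $j \gg 2^{(1-2\epsilon)k}$. So the required $k$-decay in $\lVert \mu_j * \tilde\mu_k\rVert_\infty$ cannot be obtained uniformly in $j$.

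The paper's remedy is Lemma~\ref{l:mu}: the good off-diagonal bound $|\tilde\mu_i * \mu_j| \lesssim 2^{-i/4-j}$ does hold outside a small exceptional set $Z_{i,j}$, coverable by at most $\sim e^{-c_0 2^{i/2}} 2^{j-i}$ dyadic intervals of length $2^i$. One then reopens the proof of Theorem~\ref{t:a} at the level of Lemma~\ref{l:N2} (this is Lemma~\ref{l:RN2}), splitting each off-diagonal inner product $\langle \beta_u, \beta_v\rangle$ into a ``Main'' part on which the good bound applies and an ``Exceptional'' part supported on translates of $Z_{k_u,k_v}$, controlled using only the trivial bound $|\tilde\mu_i * \mu_j| \lesssim 2^{-j}$ together with the sparseness of $Z_{k_u,k_v}$ and $\lVert b_k\rVert_\infty \lesssim 2^k$. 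Your concentration-and-Borel--Cantelli argument is correct for the diagonal condition \eqref{diagcorr} and for the off-diagonal pairs with $j \lesssim 2^{k/2}$, but the reduction to Theorem~\ref{t:a} as a black box cannot succeed.
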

\begin{remark} Specializing $\{ X_n\}$ to be i.i.d.\ $\pm 1$ random signs, we see that almost every choice of $\{ c_n = \pm \frac{1}{n} \}$ provides an affirmative answer to Rosenblatt's Problem \ref{Ros}.
\end{remark}

$\HH_X^{\mathbb{Z},*}$ is too singular to fall under the purview of Theorem \ref{t:a}; in particular, \eqref{e:off} in general fails. Nevertheless, the \emph{approach} used to establish Theorem \ref{t:a} can be suitably modified. For further work in this direction, we refer the reader to the upcoming paper of the first two authors on oscillatory singular integrals.

The structure of our paper is as follows. In $\S 2$, below we will establish our Theorem \ref{t:a} by presenting a general set of conditions for maximally truncated discrete singular integral operators to be bounded on $\ell^r(\Z), 1 < r < \infty$, and to be weakly bounded on $\ell^1(\Z)$. The $\ell^r(\Z)$ theory is familar; the endpoint theory, which is the main novelty of this paper, is motivated by recent work of the first two authors \cites{160908701,160901564}.

In $\S 3$, we will introduce our class of ``admissible" Hardy field functions, and prove pointwise convergence for the associated maximally truncated ergodic singular integral operators on simple functions.

Finally, in $\S 4$, we will prove almost sure sparse bounds for the random one-sided Hilbert transforms.

\subsection{Acknowledgments}
The first author would like to thank Ciprian Demeter for introducing him to random one-sided Hilbert transforms, and to both Ciprian Demeter and Terence Tao for helpful conversations and support.

\subsection{Notation}
As previously mentioned, we use $e(t) := e^{2\pi i t}$. We will let $M_{HL}$ denote the Hardy-Littlewood maximal function acting on the integers. 

We will make use of the modified Vinogradov notation. We use $X \lesssim Y$, or $Y \gtrsim X$ to denote the estimate $X \leq CY$ for an absolute constant $C$. If we need $C$ to depend on a parameter,
we shall indicate this by subscripts, thus for instance $X \lesssim_r Y$ denotes
the estimate $X \leq C_r Y$ for some $C_p$ depending on $r$. We use $X \approx Y$ as
shorthand for $Y \lesssim X \lesssim Y $.

\section{The Maximal Theory}
We present Theorem \ref{t:a}:  Sufficient conditions for a discrete oscillatory operator to satisfy a 
\emph{sparse bound.}

\subsection{The Principle Recursive Step}
By a \emph{dyadic} interval, we mean an interval $ \mathbb Z \cap I$, where $ I$ is a dyadic interval in $ \mathbb R $ of length at least $ 16$.  Set  
\begin{equation}\label{e:TI}
T _{I} f (x) = \mu _{i} \ast (f \mathbf 1_{\frac{1}{3} I} ) (x), \qquad   \lvert  I\rvert = 2 ^{i+3}.   
\end{equation}
Then, $ T_I f$ is supported on $ I$.  We will show the sparse bound for 
\begin{equation}\label{e:T*}
T  ^{\ast} f := 
\sup _{\epsilon } \Bigl\lvert   \sum_{I \;:\; \lvert  I\rvert < \epsilon  } T_I f  \Bigr\rvert.  
\end{equation}
This is sufficient, for this reason. There are a choice of three dyadic grids $ \mathcal D _{s}$, for $ s=1,2,3$ so that 
\begin{equation*}
\mu _{i} \ast f = \sum_{s=1} ^{3} \sum_{I\in \mathcal D _s \;:\; \lvert  I\rvert = 2 ^{i+3} } T_I f .  
\end{equation*}
We fix one such dyadic grid $ \mathcal D = \mathcal D_s$ in what follows.

The definition \eqref{e:T*} is further adapted to different choices of dyadic intervals $ \mathcal I \subset \mathcal D$.  
Set 
\begin{equation}\label{e:T8}
T _{\mathcal I} ^{\ast}  f = 
\sup _{\epsilon } \Bigl\lvert   \sum_{I \in \mathcal I \;:\; \lvert  I\rvert < \epsilon  } T_I f  \Bigr\rvert.  
\end{equation}
On occasion, $ T _{\mathcal I} f$ denotes the full sum above, without a maximal truncation.  
Then, the main Lemma is 

\begin{lemma}\label{l:sparse} Suppose that $ I_0$ is an interval, and $ \mathcal I$ is a collection of intervals $ I\subset I_0$ so that for function $ f $  supported on on $I_0 $ we have 
\begin{equation}\label{e:K}
\sup _{I\in \mathcal I} \langle f \rangle_I \leq K \langle f  \rangle _{I_0},  
\qquad
\sup _{I\in \mathcal I} \langle g \rangle_I \leq K \langle g  \rangle _{I_0},  
\end{equation}
Then, 
\begin{equation}\label{e:sparse}
\langle  T _{\mathcal I} ^{\ast}  f  , g\rangle 
\lesssim \tfrac 1 {r-1} \lvert  I_0\rvert  \langle f \rangle _{I_0}   \langle g \rangle _{I_0,r}  , \qquad 1< r < 2. 
\end{equation}
\end{lemma}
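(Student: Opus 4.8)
The plan is to run the standard stopping-time / recursive decomposition that produces sparse bounds for maximal truncations, with the two Fourier-analytic hypotheses \eqref{diagcorr} and \eqref{e:off} feeding into an $\ell^2$-type single-scale estimate. First I would fix the interval $I_0$, normalize so that $\langle f\rangle_{I_0} = \langle g\rangle_{I_0,r} = 1$, and split the collection $\mathcal I$ into its ``top'' part and the rest: let $\mathcal G \subset \mathcal I$ be the maximal intervals $I$ on which some bad event happens — either $\langle f \mathbf 1_{\frac13 I}\rangle_I$ is too large (bigger than a large multiple of $K$, say) or the local $L^r$ average of $g$ is too large, or a BMO-type oscillation of the partial sums of $T_I f$ is too large. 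The intervals in $\mathcal G$ are pairwise disjoint (after passing to maximal ones) and, by the weak-type $L^1$ bound for $M_{HL}$ applied to $f\mathbf 1_{I_0}$ together with the $L^r$ weak bound for $g$, their total length is at most $\tfrac14 |I_0|$ (this is where the $\tfrac14$ in the sparseness definition is used). On the complement $I_0 \setminus \bigcup \mathcal G$ one has good pointwise control: there $T^*_{\mathcal I} f$ is dominated by a universal constant times $\langle f\rangle_{I_0}$, so the contribution of $\langle T^*_{\mathcal I'} f, g\mathbf 1_{I_0 \setminus \bigcup\mathcal G}\rangle$ is $\lesssim |I_0|$, which is the right bound. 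Then one recurses into each $J \in \mathcal G$: by maximality the hypotheses \eqref{e:K} hold again on $J$ with the same $K$ (this is the crucial self-similarity that makes the recursion close), so we may apply the Lemma inductively and sum $\sum_{J\in\mathcal G} |J| \lesssim \tfrac14 |I_0|$, yielding a geometric series. The union of the stopping intervals generated at all levels of the recursion is the sparse collection $\mathcal S$, and the constant comes out as claimed.

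The heart of the matter — and the only place the operator's structure really enters — is the single-scale estimate needed to control the bad set coming from oscillation of $T_I f$. For this I would linearize the maximal truncation: choose a measurable $\epsilon(x)$, write $T^*_{\mathcal I}f(x) = |\sum_{I \in \mathcal I: |I| < \epsilon(x)} T_I f(x)|$, and compare the truncated sum to the full sum $T_{\mathcal I} f$ up to a single term $T_{I(x)} f(x)$ (the interval at the cutoff scale), which is harmless because each $\mu_i$ has $\ell^1$ norm $\lesssim 1$. So it suffices to bound the full operator $T_{\mathcal I}$ and a maximal truncation of a ``smooth'' piece; the genuinely dangerous object is $\langle |T_{\mathcal I} f|^2 \rangle$ on a fixed interval, or more precisely $\sum_i \|\mu_i * (f\mathbf 1)\|_2^2 + \text{cross terms}$. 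Expanding the square, $\|\sum_i \mu_i * h\|_2^2 = \sum_{i,k} \langle \mu_i * \widetilde{\mu_k} * h, h\rangle$; the diagonal-ish terms $|i-k| \le C$ are handled by \eqref{diagcorr} (which after subtracting the trivial mass at $0$ gives an $\ell^1$-bounded kernel with a tiny $2^{-\epsilon j}$ gain — this is where the van der Corput/oscillation in the hypothesis pays off), and the far-off-diagonal terms are summed geometrically using \eqref{e:off}. The net effect is an estimate like $\|T_{\mathcal I} f\|_{L^2(I_0)}^2 \lesssim |I_0| \langle f\rangle_{I_0}^2$ (an $L^1 \to L^{2,\infty}$ or weak-type improvement), possibly with a logarithmic loss that one absorbs by interpolating against the trivial $\ell^\infty \to \ell^\infty$ bound to reach the $L^r$ side for $1 < r < 2$, producing the $\tfrac1{r-1}$.

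With the single-scale weak-type bound in hand, the construction of the bad set is routine: by Chebyshev, $|\{x \in I_0 : T^*_{\mathcal I}f(x) > \lambda \langle f\rangle_{I_0}\}| \lesssim \lambda^{-1}|I_0|$ (or $\lambda^{-2}$ from the $L^2$ version), so taking $\lambda$ a large absolute constant forces this set to have measure $\le \tfrac18 |I_0|$; combined with the $g$-bad set of measure $\le \tfrac18 |I_0|$ from $M_{HL}$ on $|g|^r$, we get the needed $\tfrac14$. Pairing on the good set, $\langle T^*_{\mathcal I} f \cdot \mathbf 1_{\text{good}}, g\rangle \le \lambda \langle f\rangle_{I_0} \cdot \langle g\rangle_{I_0,1} |I_0| \le \lambda |I_0| \langle f\rangle_{I_0}\langle g\rangle_{I_0,r}$ by Hölder, which is the desired bound on that part.

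I expect the main obstacle to be the single-scale $L^2$ (weak-type) estimate and in particular extracting it cleanly in the form that survives the recursion: one must be careful that the local hypotheses \eqref{e:K} propagate with no loss in $K$ through the stopping-time selection, and that the maximal truncation over $\epsilon$ — rather than the full sum — is genuinely reduced to the full sum plus a harmless error, uniformly in the cutoff. The interpolation step that trades the dyadic-scale $L^2$ bound for an $\ell^1\to\ell^{1,\infty}$-flavored statement and then moves it to the $(1,r)$ sparse bound with the sharp $\tfrac1{r-1}$ constant will also require care, but it is by now a standard maneuver in this circle of ideas (cf.\ \cites{160908701,160901564}).
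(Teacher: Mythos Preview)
Your proposal conflates two distinct arguments. The stopping-time recursion you describe in the first and third paragraphs --- selecting bad intervals $\mathcal G$, bounding the pairing on the good set, recursing into each $J \in \mathcal G$, and summing a geometric series --- is exactly how the paper derives the sparse bound Theorem~\ref{t:t*} \emph{from} Lemma~\ref{l:sparse}; it is not the proof of the Lemma itself. Inside Lemma~\ref{l:sparse} there is no recursion and no sparse collection to build: the hypothesis \eqref{e:K} already forces $\langle f\rangle_I,\langle g\rangle_I$ to be bounded for \emph{every} $I\in\mathcal I$, so your proposed ``bad event'' of a large average never occurs, and there is nothing to recurse on. Nor can the maximal truncation be reduced to ``the full sum plus a single harmless term at the cutoff scale'': stripping one scale still leaves a supremum over all remaining cutoffs.

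The paper's actual proof is a Calder\'on--Zygmund decomposition $f=\gamma+b$. The bounded good part $\gamma$ is dispatched by the $\ell^{r'}$ bound for $T^*_{\mathcal I}$ (Proposition~\ref{ellp}). The bad part is where the real work lies, and it requires substantially more than the $L^2$ expansion you sketch: expanding $\|\sum_i \mu_i * h\|_2^2$ with \eqref{diagcorr} and \eqref{e:off} only recovers the $\ell^2\to\ell^2$ bound, not your claimed $\|T_{\mathcal I}f\|_{L^2(I_0)}^2 \lesssim |I_0|\langle f\rangle_{I_0}^2$, which is false in general since \eqref{e:K} does not control $\|f\|_2$ (take $f$ large on a thin spread-out set so that all $\langle f\rangle_I\approx 1$ while $\|f\|_2^2\gg|I_0|$). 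Instead one decomposes $b=\sum_s b_s$ by the gap $s$ between the CZ scale and the scale of $I$, splits $\mathcal I$ into ``standard'' and ``non-standard'' pieces according to whether \eqref{diagcorr} dominates $\|T_I b_{k-s}\|_2^2$, and on the non-standard collection proves a Carleson-measure estimate which, combined with the Rademacher--Menshov inequality (Lemma~\ref{l:RM}), yields $\ell^2$ control of the maximal truncation with geometric decay $2^{-s/3}$. Interpolating this against the trivial endpoint (Lemma~\ref{l:Ninfty}) produces the $\tfrac1{r-1}$. None of this structure --- the CZ decomposition, the scale-gap parameter, the standard/non-standard split, or the Rademacher--Menshov step --- appears in your outline, and it is where \eqref{e:off} and the sharp constant are actually used.
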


Let us see how this Lemma proves the sparse bound for $ T ^{\ast} $ defined in \eqref{e:T*}, which in turn immediately implies Theorem~\ref{t:a}.  

\begin{theorem}\label{t:t*} Assuming \eqref{diagcorr} and \eqref{e:off}, we have 
\begin{equation}\label{e:T*<}
\lVert T ^{\ast} \;:\; (1,r)\rVert \lesssim \tfrac 1 {r-1}, \qquad 1< r <2 . 
\end{equation}

\end{theorem}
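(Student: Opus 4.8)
The plan is to deduce Theorem~\ref{t:t*} from Lemma~\ref{l:sparse} by a standard Calder\'on--Zygmund stopping-time decomposition, building the sparse family $\mathcal S$ one generation at a time. Fix finitely supported $f,g \geq 0$ (after the usual reduction) and a large interval $I_0 \in \mathcal D$ containing the supports of $f$ and $g$; it suffices to bound $\langle T^\ast f, g\rangle$ by $\tfrac{1}{r-1}\sup \Lambda_{\mathcal S,1,r}(f,g)$. I would add $I_0$ to $\mathcal S$ and, inside $I_0$, select the maximal dyadic subintervals $I \subset I_0$ for which either $\langle f\rangle_I > K\langle f\rangle_{I_0}$ or $\langle g\rangle_{I,r} > K\langle g\rangle_{I_0,r}$, for a fixed large absolute constant $K$ (say $K = 2^{10}$). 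By the usual maximal-function/Chebyshev argument these \emph{stopping intervals} $\mathcal Q$ have total length $\sum_{Q \in \mathcal Q} |Q| \leq \tfrac14 |I_0|$, so setting $E_{I_0} := I_0 \setminus \bigcup_{Q \in \mathcal Q} Q$ gives $|E_{I_0}| \geq \tfrac34 |I_0| \geq \tfrac14 |I_0|$, as required for sparseness; one then recurses on each $Q \in \mathcal Q$.

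The key structural point is to split the operator according to this decomposition. Let $\mathcal I(I_0)$ be the collection of dyadic $I \subset I_0$ that are \emph{not} contained in any stopping interval $Q \in \mathcal Q$; on this collection the twin estimates \eqref{e:K} hold with constant $K$ for both $f$ and $g$, so Lemma~\ref{l:sparse} applies and yields $\langle T^\ast_{\mathcal I(I_0)} f, g\rangle \lesssim \tfrac{1}{r-1}|I_0|\langle f\rangle_{I_0}\langle g\rangle_{I_0,r}$, which is exactly the $I_0$-term of $\Lambda_{\mathcal S,1,r}$. The intervals $I$ that lie inside some $Q$ contribute $\sum_{Q \in \mathcal Q} T^\ast_{\mathcal I(Q)\text{-and-below}} f$; here one uses that $T_I f$ is supported in $I \subset Q$ and depends only on $f\mathbf 1_{\frac13 I} $ with $\frac13 I \subset Q$ (since $|I| \leq \tfrac13|Q|$ once we pass two generations down, using $|I| \geq 16$), so the piece of $T^\ast$ coming from intervals below $Q$ is controlled by $T^\ast$ applied to the localized data $f\mathbf 1_Q, g\mathbf 1_Q$ --- and the recursion produces the sparse collection inside $Q$. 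Summing the geometric recursion over all generations, $\mathcal S = \{I_0\} \cup \bigcup_{Q}(\text{sparse family in }Q)$ is sparse and absorbs all of $\langle T^\ast f,g\rangle$.

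A few technical points need care. First, one must handle the maximal truncation correctly when splitting across generations: the sup over $\epsilon$ in \eqref{e:T8} does not split additively, but it does split \emph{sublinearly} --- for $I_0$-admissible intervals versus below-stopping intervals one uses $T^\ast_{\mathcal I} f \leq T^\ast_{\mathcal I_1} f + T^\ast_{\mathcal I_2} f$ when $\mathcal I = \mathcal I_1 \sqcup \mathcal I_2$, since $\sup_\epsilon |\sum_{\mathcal I, |I|<\epsilon}| \leq \sup_\epsilon |\sum_{\mathcal I_1, |I|<\epsilon}| + \sup_\epsilon|\sum_{\mathcal I_2, |I|<\epsilon}|$. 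This is the only place where the maximal truncation interacts with the stopping time, and it costs nothing. Second, one must only declare a stopping interval $Q$ and recurse on it if $Q$ actually meets $\supp f$ (else it contributes nothing); this keeps $\mathcal S$ finite since $f$ is finitely supported and each generation strictly decreases the relevant average by a factor $K$, and there are only finitely many dyadic scales between $16$ and $|I_0|$ anyway.

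I expect the main obstacle to be purely bookkeeping rather than conceptual: precisely matching the localization $f \mathbf 1_{\frac13 I}$ in \eqref{e:TI} with the stopping-interval structure so that ``below a stopping interval $Q$'' genuinely reduces to running the same argument with $f, g$ replaced by $f\mathbf 1_Q, g\mathbf 1_Q$. One must skip the generation or two of scales $I$ with $\tfrac13 I \not\subset Q$ and absorb those boundary intervals into the $I_0$-level term (they satisfy the $\langle\cdot\rangle$-bounds relative to the parent of $Q$, hence relative to $I_0$ up to a bounded factor). Once that compatibility is set up, the estimate \eqref{e:T*<} follows by summing the geometric series with ratio governed by the $\tfrac14$ in the sparseness, and Theorem~\ref{t:a} is immediate since $\mathcal H^{\mathbb Z,\ast}_a f \lesssim \sum_{s=1}^3 T^{\ast}_s f$ over the three grids.
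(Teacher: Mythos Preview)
Your proposal is correct and follows essentially the same recursive stopping-time argument as the paper's proof. One minor discrepancy: the hypothesis \eqref{e:K} of Lemma~\ref{l:sparse} asks for control of the $L^1$ average $\langle g\rangle_I$, and the paper accordingly stops on $\langle g\rangle_J \geq 10\langle g\rangle_{I_0}$ rather than on $\langle g\rangle_{J,r}$; with your $L^r$-stopping the hypothesis \eqref{e:K} is not literally satisfied, so you would either switch to $L^1$-stopping on $g$ (the lemma's conclusion already supplies the $\langle g\rangle_{I_0,r}$ factor you need) or invoke the trivially modified variant of the lemma.
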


\begin{proof}[Proof assuming Lemma~\ref{l:sparse}]  
We can assume that $ f, g$ are supported on a dyadic interval $ I_0$.  Note that if $ x \in \mathbb Z \setminus I_0$, we have $ T ^{\ast} f (x) \lesssim M _{\textup{HL}} f (x)$. But it is a well known fact that the Hardy-Littlewood maximal function satisfes a $ (1,1)$ sparse bound.

We can therefore take $ I_0$ to be in the sparse collection $ \mathcal S$ that defines our sparse operator, 
and assume that the parent of $ I_0$ in $ \mathcal S$ is at least four times a big.   
It therefore remains to verify the sparse bound 
for $ \langle T ^{\ast} _{\mathcal I_0} f, g \rangle$, where $ \mathcal I_0$ is the collection of all dyadic intervals contained in $ I_0$. 

Let $ \mathcal E $ be the maximal dyadic subintervals $ J$ of $ I_0$ for which $ \langle f \rangle_J \geq 10 \langle f \rangle _{I_0}$ and/or  $ \langle g \rangle_J \geq 10 \langle g \rangle _{I_0}$. 
Setting $ E = \bigcup _{J\in \mathcal E} J$, we see that $ \lvert  E\rvert \leq \frac 1 {5} \lvert  I_0\rvert  $.  
And, letting $ \mathcal I  = \{I\in \mathcal I_0 \;:\; I\not\subset E\}$, we then have 
\begin{align*}
\langle T ^{\ast} _{\mathcal I_0} f, g \rangle
& \leq 
\lvert  \langle T ^{\ast} _{\mathcal I} f, g \rangle\rvert  
+ \sum_{J\in \mathcal E} \lvert  \langle T ^{\ast} _{\mathcal I_0 (J)} f, g \rangle\rvert  
\end{align*}
where $ \mathcal I_0 (J) = \{I\in \mathcal I_0 \;:\; I\subset J\}$.   But the first term 
is controlled by Lemma~\ref{l:sparse}. In particular, the right side of \eqref{e:sparse} is incorporated into the sparse form.  
And the collection $ \mathcal E$ is added to the sparse collection.  The proof follows by recursion.  
\end{proof}

\subsection{Proof of Lemma~\ref{l:sparse}}

We can assume $ \langle f \rangle _{I_0}=1$.  
Let $ \mathcal B$ be the maximal dyadic subintervals $ J\subset I_0$ for which $ \langle f \rangle _{J} \geq K \langle f \rangle _{I_0}$.  Make the  Calder\'{o}n-Zygmund decomposition,  writing $ f = \gamma +b$, 
where 
\begin{align}
b = \sum_{J\in \mathcal B} f \mathbf 1_{J}.  
\end{align}
(Note, no cancellative properties of $ b$ are needed. We can and do take $ f \geq 0$, so that $ b$ is as well.)  
The ``good'' function $ \gamma $ is bounded, so by Proposition~\ref{ellp}, we have 
\begin{align*}
\langle T _{\mathcal I} ^{\ast} \gamma , g\rangle 
\lesssim \lVert  T _{\mathcal I} ^{\ast} \gamma\rVert _{r'} \lVert g\rVert _{r} 
\lesssim \tfrac 1 {r-1} \lvert  I_0\rvert  \langle f \rangle _{I_0} \langle g \rangle _{I_0,r}. 
\end{align*}
It remains to consider the bad function.

For integers $ s$ let $ \mathcal B (s)$ be the intervals $ J\in \mathcal B$ with $ \lvert  J\rvert= 2 ^{s} $, and set 
$ b = \sum_{s=0} ^{\infty } b_s$, where 
\begin{equation}\label{e:bs}
b_s = \sum_{J\in \mathcal B(s)} f \mathbf 1_{J}.  
\end{equation}
The principle points we have 
\begin{equation}\label{e:b}
\lVert b_s\rVert _{\infty } \lesssim 2 ^{s}, \qquad \sum_{s=0} ^{\infty } \lVert b_s\rVert_1 \leq \lvert  I_0\rvert.   
\end{equation}

It is important to note that if $ I\in \mathcal I$, and $ J\in \mathcal B$, if $ I\cap J\neq \emptyset $ we necessarily have $ J\subsetneq I$, just by construction.    So, in particular, 
\begin{equation*}
T_I b = \sum_{s \;:\; 2 ^{s} < \lvert  I\rvert } T _{I} b_s
\end{equation*}
And, therefore 
\begin{equation*}
T _{\mathcal I} b = \sum_{k = 2} ^{\infty } \sum_{0\leq s <  k} T _{\mathcal I (k)} b_{k-s}  .  
\end{equation*}
Above, we are setting $ \mathcal I (k) = \{I\in \mathcal I \;:\; \lvert  I\rvert = 2 ^{k+3} \}$.  
We will hold $ s$ fixed, obtaining geometric decay in that parameter.  Thus, set 
\begin{equation}\label{e:TIs}
T _{I,s} b (x) = T_I b _{k-s}  (x)= 
\mu _{k} \ast (b _{k-s} \mathbf 1_{\frac{1}{3} I} ) (x), \qquad   \lvert  I\rvert = 2 ^{k+3}
\end{equation}
We use the notation $ T _{\mathcal I,s}b$ and $ T _{\mathcal I, s} ^{\ast} b$ in a manner consistent with 
\eqref{e:T8}.  
\begin{lemma}\label{l:s} Under the assumptions of Lemma~\ref{l:sparse}, and the notation \eqref{e:bs} we have 
\begin{equation}\label{e:s}
\lVert  T _{\mathcal I,s} ^{\ast} b \rVert _{r'}  \lesssim  2 ^{-\frac s {3(r-1)}} \lvert  I_0\rvert  ^{\frac {r-1} r}, \qquad 1< r < 2.  
\end{equation}
\end{lemma}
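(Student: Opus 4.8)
The plan is to interpolate between an $L^1$ and an $L^2$ estimate for $T_{\mathcal I,s}^\ast b$, both with geometric gain in $s$, and then read off the $L^{r'}$ bound. First I would record the elementary $L^1$ (or $\ell^\infty$-dual) bound: since $\lVert b_{k-s}\rVert_\infty \lesssim 2^{k-s}$ and $\lVert\mu_k\rVert_1 \lesssim 1$ with $\mu_k$ supported on a set of size $\sim 2^k$, each $T_{I,s}b$ obeys $\lVert T_{I,s} b\rVert_\infty \lesssim 2^{-s}\cdot$(local average), and summing over $I\in\mathcal I$ of a fixed scale and then over scales using $\sum_k\lVert b_{k-s}\rVert_1 \le |I_0|$ gives $\lVert T_{\mathcal I,s}^\ast b\rVert_1 \lesssim 2^{-s}|I_0|$. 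The crude $\ell^\infty$ estimate must be combined with the disjointness of the intervals $J\in\mathcal B(k-s)$ of a fixed scale so that the scale-$k$ piece is controlled by $2^{-s}$ times an $L^1$-mass that telescopes.

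The heart of the matter is the $L^2$ estimate, $\lVert T_{\mathcal I,s}^\ast b\rVert_2 \lesssim 2^{-\epsilon' s}|I_0|^{1/2}$ for some $\epsilon'>0$. Here I would first remove the maximal truncation: since the intervals in $\mathcal I(k)$ are disjoint for fixed $k$, the supremum over $\epsilon$ of partial sums $\sum_{|I|<\epsilon}$ can be dominated (up to the usual Rademacher--Menshov or square-function cost, which only costs logarithmic factors that are absorbed) by the full sum plus a square function $\bigl(\sum_k |T_{\mathcal I(k),s}b|^2\bigr)^{1/2}$; alternatively one estimates $\lVert\sup_\epsilon|\cdot|\rVert_2^2 \le \sum_k \lVert T_{\mathcal I(k),s}b\rVert_2^2 + 2\sum_{k<l}|\langle T_{\mathcal I(k),s}b, T_{\mathcal I(l),s}b\rangle|$ directly. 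The diagonal terms $\sum_k\lVert T_{\mathcal I(k),s}b\rVert_2^2$ are handled by \eqref{diagcorr}: expanding $\lVert \mu_k * (b_{k-s}\mathbf 1_{\frac13 I})\rVert_2^2$ and using $|\mu_k*\tilde\mu_k(x)|\lesssim 2^{-(1+\epsilon)k}$ for $x\ne 0$ together with the trivial bound at $x=0$, one gets $\lesssim 2^{-(1+\epsilon)k}\lVert b_{k-s}\rVert_1^2 + 2^{-k}\lVert b_{k-s}\rVert_1\lVert b_{k-s}\rVert_\infty \lesssim 2^{-\epsilon k + \text{something}}$; using $\lVert b_{k-s}\rVert_\infty\lesssim 2^{k-s}$ and $\lVert b_{k-s}\rVert_1 \lesssim 2^{k-s}\cdot\#\mathcal B(k-s)$ with $\sum_k \#\mathcal B(k-s)2^{k-s}\le|I_0|$, this sums to $\lesssim 2^{-\epsilon s}|I_0|$ after a short bookkeeping.

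For the off-diagonal inner products $\langle T_{\mathcal I(k),s}b, T_{\mathcal I(l),s}b\rangle$ with $l > k+C$, I would expand into $\langle \mu_l*\tilde\mu_k*(b_{k-s}\mathbf 1), b_{l-s}\mathbf 1\rangle$ and apply \eqref{e:off}: $\lVert\mu_l*\tilde\mu_k\rVert_\infty \lesssim 2^{-\epsilon l - k}$, giving $\lesssim 2^{-\epsilon l - k}\lVert b_{k-s}\rVert_1\lVert b_{l-s}\rVert_1$, and Schur-testing / summing the resulting geometric series over $k<l$ — using again $\sum \#\mathcal B(m)2^m \le |I_0|$ — produces $\lesssim 2^{-\epsilon' s}|I_0|$. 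The narrow band $k < l \le k+C$ is absorbed into the diagonal at the cost of the constant $C$. Combining, $\lVert T_{\mathcal I,s}^\ast b\rVert_2 \lesssim 2^{-\epsilon' s/2}|I_0|^{1/2}$. Interpolating this with the $L^1$ bound $\lVert T_{\mathcal I,s}^\ast b\rVert_1\lesssim 2^{-s}|I_0|$ at exponent $r'$ (so $\frac{1}{r'} = \frac{2-r'}{\;\cdot\;}$, i.e. weight $\theta = 2/r' - 1 = (2-r)/r\cdot$ up to the standard identity, giving $|I_0|^{(r-1)/r}$ and a gain $2^{-cs(2/r'-1)} \sim 2^{-cs(r-1)}$) yields exactly \eqref{e:s}, with the $\frac{1}{3(r-1)}$ in the exponent coming from tracking the explicit interpolation exponent against the $2^{-s}$ and $2^{-\epsilon's}$ factors. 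The main obstacle will be controlling the maximal truncation in $L^2$ without losing the geometric factor in $s$: the naive square-function bound costs a factor that must be shown to be dominated by the $2^{-\epsilon s}$ gain, which requires being slightly careful about how the $\epsilon$ from \eqref{diagcorr}--\eqref{e:off} is allocated between killing the $k$-sum and killing the $s$-sum.
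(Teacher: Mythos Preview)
Your plan has a fundamental gap at the interpolation step. For $1<r<2$ we have $r'>2$, so the target norm in \eqref{e:s} lies \emph{above} $\ell^2$. Interpolating an $\ell^1$ bound and an $\ell^2$ bound can only produce $\ell^p$ estimates for $1\le p\le 2$; it never reaches $\ell^{r'}$. Your own computation of the interpolation weight $\theta=2/r'-1$ confirms this: for $r'>2$ that number is negative. To land at $r'>2$ you need an endpoint on the other side of $\ell^2$ --- i.e., an $\ell^\infty$ (or bilinear $L^1$-pairing) estimate. This is exactly why the paper splits $\mathcal I=\mathcal S_s\cup\mathcal N_s$: on the ``standard'' collection $\mathcal S_s$ the pointwise decay \eqref{diagcorr} furnishes an $\ell^2$ bound with extra geometric decay \emph{in the scale $k$}, so one can interpolate scale-by-scale between $\ell^\infty$ and $\ell^2$ and then sum in $k$ (Proposition~\ref{p:standard}); on the ``non-standard'' collection $\mathcal N_s$ no such $k$-decay is available, and the paper instead proves an $\ell^2$ bound with $2^{-s/3}$ gain (Lemma~\ref{l:N2}) together with the bilinear endpoint \eqref{e:Ninfty}, and interpolates those. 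Your claimed $\ell^1$ bound with $2^{-s}$ gain is also not justified: since $\lVert\mu_k\rVert_1\approx 1$, one only has $\lVert T_{\mathcal I(k),s}b\rVert_1\lesssim\lVert b_{k-s}\rVert_1$ with no gain, and summing over $k$ gives $\lvert I_0\rvert$, not $2^{-s}\lvert I_0\rvert$.

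The $\ell^2$ step is also where the real work hides, and your sketch under-sells it. Writing $\lVert T_{\mathcal I,s}^\ast b\rVert_2^2\le\sum_k\lVert\cdot\rVert_2^2+2\sum_{k<l}\lvert\langle\cdot,\cdot\rangle\rvert$ controls the \emph{full sum}, not the \emph{maximal truncation}; and Rademacher--Menshov applied naively over the scales $k$ costs a factor of $\log\lvert I_0\rvert$, which is unrelated to $s$ and in general destroys the $2^{-\epsilon's}$ gain. The paper handles this (for the non-standard part, where the difficulty lives) by introducing a further density parameter $t$ via \eqref{e:st}, proving a Carleson measure estimate \eqref{e:CM} which limits the number of overlapping intervals in $\mathcal N_{s,t}$ to $O(2^t)\le O(2^s)$, regrouping into layers $\mathcal M_u$ of minimal intervals, and only then applying Rademacher--Menshov over the $u$-index. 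The off-diagonal hypothesis \eqref{e:off} is used not between raw scales $k<l$, but between these layers $u<v$; this is what makes the logarithmic loss $O(s)$ rather than $O(\log\lvert I_0\rvert)$, hence absorbable into the exponential gain.
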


Summing over $ s \geq 1$ will give us the leading constant $ \frac 1 {r-1}$.  

\bigskip 
We take up the proof of  Lemma~\ref{l:s}. 
Note that we have 
\begin{align} \label{e:22}
\lVert T_{I,s} b \rVert_2 ^2 = \sum_{x\in I} b_{k-s} \mathbf 1_{\frac 13 I} (x) \cdot  \tilde \mu _{i} \ast \mu _i \ast (b _{k-s} \mathbf 1_{\frac 13I}) (x). 
\end{align}
Above, $ \lvert  I\rvert = 2 ^{i+3} $.  
Recalling \eqref{diagcorr}, one of our chief assumptions about $ \mu _j$,  the estimate of the right hand side naturally splits into two cases:  
The convolution is dominated by $ \tilde \mu _{i} \ast \mu _i  (x)\mathbf 1_{ x\neq 0}$, the `standard' case, 
or not, the `non-standard' case.

Write 
\[ \mathcal I = \mathcal S_s \cup \mathcal N_s = \bigcup_k \mathcal{S}_s(k) \cup \bigcup_k \mathcal{N}_s(k),\]
where $ I\in \mathcal S_s(k)$ if $ I\in \mathcal I(k)$ and 
\begin{equation}\label{e:standard}
\lVert T_I b_{k-s} \rVert_2 ^2  \leq 64C_0 \lvert  I\rvert ^{-1- \epsilon }  \lVert b_{k-s} \mathbf 1_{I}\rVert_1 ^2 ;   
\end{equation}
and, we collect $I \in \mathcal{N}_s(k)$ if the above inequality \eqref{e:standard} fails.
Here, $ C_0$ is the implied constant in \eqref{diagcorr}, and $ K$ is as in \eqref{e:K}.  

\begin{proposition}\label{p:standard} With the above notation, we have the bound 
\begin{equation}\label{e:standard<}
\lVert   T _{\mathcal S_s ,s} ^{\ast}  b  \rVert _{r'}
 \lesssim   2 ^{- \frac{s}{2(r-1)}  }  \lvert  I_0\rvert ^{\frac {r-1}r}, \qquad 1< r < 2.  
\end{equation}

\end{proposition}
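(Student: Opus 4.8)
We want to control the maximal truncation $T^*_{\mathcal S_s,s}b$ in $\ell^{r'}$ for $1<r<2$, using only the "standard" portion of the decomposition, where by definition each $I\in\mathcal S_s(k)$ satisfies the $L^2$ bound $\lVert T_I b_{k-s}\rVert_2^2\le 64C_0\lvert I\rvert^{-1-\epsilon}\lVert b_{k-s}\mathbf 1_I\rVert_1^2$. The plan is to interpolate between an $\ell^2$ estimate with geometric gain in $s$ and a crude $\ell^\infty$ (or $\ell^1\to\ell^1$) estimate, after first reducing the maximal truncation to a square-function-type sum over scales.

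**Step 1: remove the maximal truncation.** Since $T_I b_{k-s}$ is supported on $I$ and the intervals of a fixed scale $2^{k+3}$ in the dyadic grid $\mathcal D$ are disjoint, for each fixed $k$ the sum $\sum_{I\in\mathcal S_s(k)}T_I b_{k-s}$ has boundedly overlapping supports, so the truncated sums are dominated by $\sum_k \bigl\lvert\sum_{I\in\mathcal S_s(k)}T_I b_{k-s}\bigr\rvert$. Thus $T^*_{\mathcal S_s,s}b(x)\le \sum_k F_k(x)$ where $F_k:=\sum_{I\in\mathcal S_s(k)}\lvert T_I b_{k-s}\rvert$, and it suffices to bound $\sum_k \lVert F_k\rVert_{r'}$, or better to exploit orthogonality across $k$ via a vector-valued/square-function argument.

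**Step 2: the $\ell^2$ estimate with $s$-gain.** For the $\ell^2$ bound, expand $\lVert F_k\rVert_2^2$ using disjointness of the $I$'s at scale $k$ and the standard estimate \eqref{e:standard}: $\lVert F_k\rVert_2^2 = \sum_{I\in\mathcal S_s(k)}\lVert T_I b_{k-s}\rVert_2^2 \lesssim \sum_{I} \lvert I\rvert^{-1-\epsilon}\lVert b_{k-s}\mathbf 1_I\rVert_1^2$. Now use that each $I$ contains the bad intervals $J\in\mathcal B(k-s)$ it meets, that $\lVert b_{k-s}\mathbf 1_J\rVert_1\lesssim 2^{k-s}$ on each such $J$ (from \eqref{e:b}), and that $\langle f\rangle_I\lesssim K$ via \eqref{e:K}; this gives $\lVert b_{k-s}\mathbf 1_I\rVert_1\lesssim \lvert I\rvert\langle b_{k-s}\rangle_I$ together with the crucial scale gap $\lvert J\rvert = 2^{k-s}=2^{-s}\lvert I\rvert/8$. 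Carefully summing yields $\lVert F_k\rVert_2^2\lesssim 2^{-\epsilon k}\cdot 2^{-cs}\sum_{J\in\mathcal B(k-s)}\lvert J\rvert$ for some fixed $c>0$ coming from the scale gap; summing over $k$ and using $\sum_s\sum_{J\in\mathcal B(s)}\lvert J\rvert\le\lvert I_0\rvert$ (from \eqref{e:b}), and Cauchy–Schwarz in $k$ against the geometric factor $2^{-\epsilon k/2}$, gives $\lVert T^*_{\mathcal S_s,s}b\rVert_2\lesssim 2^{-cs/2}\lvert I_0\rvert^{1/2}$. The precise exponent must be $\tfrac12$ in the $s$-exponent to match the claimed $2^{-s/(2(r-1))}$ after interpolation, so bookkeeping of constants here is essential.

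**Step 3: the trivial endpoint and interpolation.** For the crude bound, note $\lvert T_I b_{k-s}\rvert\lesssim \lvert I\rvert^{-1}\lVert b_{k-s}\mathbf 1_I\rVert_1 \mathbf 1_I\lesssim M_{HL}b_{k-s}$, so summing the $\ell^1$ norms over $k$ using the second half of \eqref{e:b} gives $\lVert T^*_{\mathcal S_s,s}b\rVert_1\lesssim \lvert I_0\rvert$ (with no gain in $s$, which is fine). Interpolating the $\ell^2$-with-$2^{-cs/2}$-gain bound against the $\ell^1$ bound, at the exponent $r'$ with $\tfrac1{r'}=\tfrac{\theta}{1}+\tfrac{1-\theta}{2}$, i.e.\ $\theta = 2/r'-1 = (2-r)/r\cdot\ldots$ — one checks $1-\theta = 2(r-1)/r$ so that the $s$-gain becomes $2^{-cs(1-\theta)/2}=2^{-cs(r-1)/(2r)}$ and the $I_0$-power becomes $\lvert I_0\rvert^{(r-1)/r}$ — reproduces \eqref{e:standard<} (adjusting $c$; the paper's $\tfrac12$ in the exponent presumably corresponds to taking $c=1$, which one gets from the scale-gap factor $2^{-s}$ entering $\lVert F_k\rVert_2^2$ linearly).

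**Main obstacle.** The delicate point is Step 2: correctly tracking how the separation of scales $\lvert J\rvert = 2^{-s}\lvert I\rvert$ between bad intervals and the $\mathcal I$-interval produces the honest $2^{-s}$ factor inside $\lVert F_k\rVert_2^2$ (hence $2^{-s/2}$ after square-rooting), while \emph{simultaneously} the sum over all bad intervals of all scales is controlled by $\lvert I_0\rvert$ only once — so one cannot afford to lose the packing of the $b_{k-s}$ mass. This requires organizing the double sum over $(k,J)$ so that each $J\in\mathcal B$ is charged exactly once, with the $k$-sum (equivalently the sum over ancestors $I\supset J$ at the fixed relative scale) contributing only the geometric $2^{-\epsilon k}$. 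I expect this combinatorial/counting bookkeeping, rather than any single inequality, to be where the real work lies.
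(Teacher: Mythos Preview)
There is a genuine gap in Step~3. Since $1<r<2$, the target exponent is $r'>2$, and you \emph{cannot} reach $\ell^{r'}$ by interpolating between an $\ell^1$ and an $\ell^2$ bound. In your own computation $\tfrac1{r'}=\tfrac{\theta}{1}+\tfrac{1-\theta}{2}$ forces $\theta=2/r'-1$, which is \emph{negative} for every $r'>2$; the subsequent arithmetic (``$1-\theta=2(r-1)/r$'', etc.) is therefore meaningless, and no valid $\ell^{r'}$ estimate follows. The crude $\ell^1$ endpoint you propose, even if correct, is on the wrong side of $\ell^2$ for this purpose.

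The paper's proof instead interpolates between $\ell^2$ and $\ell^\infty$, and it does so scale by scale. For each fixed $k$ one has, using \eqref{e:b},
\[
\lVert T_{\mathcal S_s(k),s} b\rVert_\infty \lesssim 2^{-s},
\qquad
\lVert T_{\mathcal S_s(k),s} b\rVert_2 \lesssim 2^{-s/2}\,2^{-k\epsilon/2}\,|I_0|^{1/2};
\]
both endpoints already carry $s$-decay, so interpolating to $\ell^{r'}$ preserves a power of $2^{-s}$, while the $2^{-k\epsilon/2}$ from the $\ell^2$ side survives and makes the sum over $k$ geometric. Note in particular that the $\ell^\infty$ endpoint here relies on the pointwise bound $\lVert b_{k-s}\rVert_\infty\lesssim 2^{k-s}$ from \eqref{e:b} together with $\lvert\mu_k\rvert\lesssim 2^{-k}$; this is where the honest $2^{-s}$ comes from, not from any combinatorial packing of bad intervals. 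Your Step~2, which tries to extract the full $s$-gain at $\ell^2$ via ``the crucial scale gap $|J|=2^{-s}|I|/8$'' and ``careful summing'', is both vaguer than needed and, more importantly, cannot by itself feed into the correct interpolation.
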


\begin{proof}
For fixed $ k$, we have by \eqref{e:b} 
\begin{equation*}
\lVert  T _{\mathcal S_s (k),s} b \rVert _{\infty } \lesssim 2 ^{-s}.  
\end{equation*}
And, by \eqref{e:standard}, and again \eqref{e:b}, 
\begin{equation*}
\lVert  T _{\mathcal S_s (k),s} b \rVert_{2 } \lesssim 2 ^{-s/2} 2 ^{-k \epsilon /2} \lvert  I_0\rvert ^{1/2}.     
\end{equation*}
Interpolating, and summing over $ k\geq 1$ completes the proof. 
\end{proof}

Thus, the core is the control of the non-standard collections.  In the case that $ I\in \mathcal N_s (k)$, we have 
\begin{equation}\label{e:non}
\lVert T_{I,s} b \rVert_2 ^2   \lesssim \lvert  I\rvert ^{-1}  \lVert b_{k-s} \mathbf 1_{I}\rVert_2 ^2 . 
\end{equation}
since the the convolution with $\tilde \mu _{i} \ast \mu _i   $ in \eqref{e:22} is dominated by 
$  \tilde \mu _{i} \ast \mu _i  (0)\simeq \lvert  I\rvert ^{-1}  $.  
It is worth remarking that the purely $ \ell ^2 $ bound below 
\begin{equation*}
\lVert 
  T _{ \mathcal N_s,s}b   
\rVert _2 \lesssim 2 ^{-s/2} \lvert I_0\rvert ^{1/2} 
\end{equation*}
is known \cites{MR951506,CCPO}, but a method to obtain a bound for maximal truncations is new, and adapted from \cite{160901564}.

The endpoint bound is easy.  

\begin{lemma}\label{l:Ninfty}  Assume that $g$ satisfies \eqref{e:K}
We have the bound uniformly in $s $. 
\begin{equation}\label{e:Ninfty}
\langle T _{\mathcal N_s,s} ^{\ast}  b , g\rangle \lesssim  \lvert I_0\rvert 
\langle f\rangle _{I_0} \langle g\rangle _{I_0}.  
\end{equation}

\end{lemma}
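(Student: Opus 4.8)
\textbf{Proof plan for Lemma~\ref{l:Ninfty}.}
The plan is to exploit the fact that the non-standard intervals $I \in \mathcal N_s(k)$ are ``rare'': membership in $\mathcal N_s(k)$ forces the local $\ell^2$ mass of $b_{k-s}$ on $I$ to be large relative to its $\ell^1$ mass, which in turn forces $I$ to contain a substantial portion of one of the bad intervals $J \in \mathcal B(k-s)$. Concretely, comparing \eqref{e:non} (which always holds) with the negation of \eqref{e:standard}, we get
\begin{equation*}
\lvert I\rvert^{-1-\epsilon} \lVert b_{k-s}\mathbf 1_I\rVert_1^2 \lesssim \lVert T_{I,s}b\rVert_2^2 \lesssim \lvert I\rvert^{-1} \lVert b_{k-s}\mathbf 1_I\rVert_2^2,
\end{equation*}
so $\lVert b_{k-s}\mathbf 1_I\rVert_1^2 \lesssim \lvert I\rvert^{\epsilon}\lVert b_{k-s}\mathbf 1_I\rVert_2^2 \lesssim \lvert I\rvert^\epsilon \lVert b_{k-s}\rVert_\infty \lVert b_{k-s}\mathbf 1_I\rVert_1 \lesssim \lvert I\rvert^\epsilon 2^{k-s}\lVert b_{k-s}\mathbf 1_I\rVert_1$, i.e.\ either $b_{k-s}\mathbf 1_I \equiv 0$ or $\lVert b_{k-s}\mathbf 1_I\rVert_1 \gtrsim \lvert I\rvert 2^{-\epsilon'}$ for a slightly smaller exponent. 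In either case the total $\ell^1$ mass of the $b_{k-s}$'s carried by non-standard intervals of a fixed scale is controlled by $\lvert I_0\rvert$ up to a harmless power of the scale, which can be absorbed once summed against the geometric factor $2^{-s}$ coming from $\lVert b_{k-s}\rVert_\infty/\lvert I\rvert \lesssim 2^{-s}$.

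The second ingredient is a pointwise bound: for $x \in I_0$,
\begin{equation*}
T_{\mathcal N_s,s}^{\ast} b(x) \lesssim \sup_k \sum_{I \in \mathcal N_s(k)\,:\, x \in I} \lvert T_{I,s}b(x)\rvert,
\end{equation*}
and since the intervals in $\mathcal N_s(k)$ of a fixed scale are disjoint, at most one term survives for each $k$; bounding $\lvert T_{I,s}b(x)\rvert \le \lVert \mu_k\rVert_\infty \cdot \lVert b_{k-s}\mathbf 1_{\frac13 I}\rVert_1 \lesssim 2^{-k}\lVert b_{k-s}\mathbf 1_I\rVert_1$ and recalling $\lvert I\rvert \simeq 2^k$, we get $T_{\mathcal N_s,s}^\ast b(x) \lesssim \sup_k 2^{-k}\lVert b_{k-s}\mathbf 1_{I(x,k)}\rVert_1$, which is at most $\sum_k 2^{-k}\lVert b_{k-s}\mathbf 1_{I(x,k)}\rVert_1$ where $I(x,k)$ is the unique scale-$2^{k+3}$ dyadic interval containing $x$. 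Pairing with $g$ and using \eqref{e:K} for $g$ (so that $\langle g\rangle_I \le K\langle g\rangle_{I_0}$ on the relevant intervals), we obtain
\begin{equation*}
\langle T_{\mathcal N_s,s}^\ast b, g\rangle \lesssim K \langle g\rangle_{I_0} \sum_k 2^{-k} \sum_{I \in \mathcal N_s(k)} \lVert b_{k-s}\mathbf 1_I\rVert_1 \lesssim K\langle g\rangle_{I_0} \sum_k 2^{-k}\cdot 2^{-s}\sum_{I\in\mathcal N_s(k)}\lvert I\rvert,
\end{equation*}
where the last step uses $\lVert b_{k-s}\mathbf 1_I\rVert_1 \le \lVert b_{k-s}\rVert_\infty \lvert I\rvert \lesssim 2^{k-s}$. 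Finally, $\sum_{I\in\mathcal N_s(k)}\lvert I\rvert$ is controlled by $\lvert I_0\rvert$ (disjointness within a scale, all contained in $I_0$) up to the overlap across scales, which the decaying factor $2^{-k}$ tames after summing in $k$; thus the whole expression is $\lesssim \lvert I_0\rvert \langle g\rangle_{I_0}$, and since we normalized $\langle f\rangle_{I_0}=1$ this is \eqref{e:Ninfty}.

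The main obstacle, and the place requiring care, is making the counting bound $\sum_k 2^{-k}\sum_{I\in\mathcal N_s(k)}\lvert I\rvert \lesssim \lvert I_0\rvert$ rigorous while keeping the dependence on $s$ and on the scales controlled: one wants to avoid losing a factor of $\log\lvert I_0\rvert$ from summing over all scales $k$. The clean way is to route through the $\ell^1$ masses of the bad intervals rather than crude volume counting — use the first displayed inequality above to charge each non-standard $I$ to the bad intervals $J \subset I$ it contains, invoke $\sum_{s}\sum_{J\in\mathcal B}\lvert J\rvert \le \sum_{s}\lVert b_s\rVert_1 \le \lvert I_0\rvert$ from \eqref{e:b}, and note that each $J$ is charged to at most one $I$ per scale with $2^{-k}$-weighted total bounded by a constant. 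This replaces the problematic double sum over scales with the single summable bound $\sum_s \lVert b_s\rVert_1 \le \lvert I_0\rvert$, at the cost of the extra factor $2^{-s}$ which only helps. I expect the endpoint bound \eqref{e:Ninfty} to then follow with room to spare — indeed the lemma asserts only uniformity in $s$, not decay, so even a lossy version of the counting argument suffices.
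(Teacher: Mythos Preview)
Your core approach---linearize the maximal truncation, use the trivial pointwise bound $\lvert T_{I,s}b(x)\rvert \lesssim \lvert I\rvert^{-1}\lVert b_{k-s}\mathbf 1_I\rVert_1$, pair with $g$, and invoke \eqref{e:K}---is exactly the paper's. But you introduce a bookkeeping error that then sends you on an unnecessary detour.

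The error is in the displayed inequality
\[
\langle T_{\mathcal N_s,s}^\ast b, g\rangle \lesssim K \langle g\rangle_{I_0} \sum_k 2^{-k} \sum_{I \in \mathcal N_s(k)} \lVert b_{k-s}\mathbf 1_I\rVert_1.
\]
When you pair the pointwise bound with $g$, you sum $\lvert g\rvert$ over $x\in I$, which contributes a factor $\lvert I\rvert \langle g\rangle_I \simeq 2^{k}\langle g\rangle_I$; this \emph{cancels} your $2^{-k}$. The correct inequality is
\[
\langle T_{\mathcal N_s,s}^\ast b, g\rangle \lesssim K \langle g\rangle_{I_0} \sum_k \sum_{I \in \mathcal N_s(k)} \lVert b_{k-s}\mathbf 1_I\rVert_1,
\]
with no $2^{-k}$. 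From here the proof is one line: the intervals in $\mathcal N_s(k)$ at each fixed $k$ are disjoint, so the inner sum is $\le \lVert b_{k-s}\rVert_1$, and then $\sum_k \lVert b_{k-s}\rVert_1 \le \sum_j \lVert b_j\rVert_1 \le \lvert I_0\rvert$ by \eqref{e:b}. This is precisely the paper's proof.

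Everything else in your plan is a red herring. The ``rarity'' observation in your first paragraph (comparing \eqref{e:non} with the failure of \eqref{e:standard}) plays no role in this lemma; it is relevant for the $\ell^2$ estimate in Lemma~\ref{l:N2} but not here. Your detour through $\lVert b_{k-s}\rVert_\infty$ and volume counting---replacing $\lVert b_{k-s}\mathbf 1_I\rVert_1$ by $2^{-s}\lvert I\rvert$---throws away exactly the information you need, and the ``obstacle'' you then worry about (a possible $\log\lvert I_0\rvert$ loss from summing scales) is an artifact of that wasteful step. Your own proposed fix in the last paragraph (``route through the $\ell^1$ masses of the bad intervals'') is, stripped of the spurious $2^{-k}$ weight, nothing more than the direct estimate above. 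Also, the intermediate claim $T_{\mathcal N_s,s}^\ast b(x) \lesssim \sup_k(\cdots)$ is false (the truncations are partial sums over scales, so the bound is by $\sum_k$, not $\sup_k$); you immediately pass to the sum anyway, so this is harmless, but it should be corrected.
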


\begin{proof}
 For any choice of measurable functions $ \varepsilon_k \;:\; I \mapsto \{z \;:\; \lvert  z\rvert \leq 1 \}$,  we have 
\begin{align*}
\Bigl\langle 
\sum_{k = s} ^{k_0 }  \varepsilon_k  T _{\mathcal N_s (k)} b_{k-s}, g \Bigr\rangle
&\lesssim  \sum_{k = s} ^{k_0 } 
\sum _{I\in \mathcal{N} _s } \lVert b_{k-s} \mathbf{1}_I \rVert_1 \langle g\rangle _{I}
\\
& \lesssim  \langle g\rangle _{I_0}\sum_{s} \lVert b _{k-s}\rVert_1 \lesssim   
 \lvert I_0\rvert 
\langle f\rangle _{I_0} \langle g\rangle _{I_0}. 
\end{align*}
This proves \eqref{e:Ninfty}.  
\end{proof}

We need a good estimate for maximal truncations at $ \ell ^2 $.

\begin{lemma}\label{l:N2}  We have the bound uniformly in $ $
\begin{equation}\label{e:N2}
\lVert  T _{\mathcal N_s,s} ^{\ast} b\rVert _ 2 \lesssim  2 ^{-s/3} | I_0| ^{1/2} . 
\end{equation}

\end{lemma}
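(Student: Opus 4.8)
The plan is to control the maximal truncation $T^{\ast}_{\mathcal N_s,s}b$ by passing from the supremum over scales to an $\ell^2$-sum over scales, exploiting the geometric gain in $k$ that the non-standard hypothesis \eqref{e:non} provides. First I would recall that for $I\in\mathcal N_s(k)$ we have from \eqref{e:non} the pointwise/$L^2$ estimate $\lVert T_{I,s}b\rVert_2^2\lesssim \lvert I\rvert^{-1}\lVert b_{k-s}\mathbf 1_I\rVert_2^2$, and since $\lVert b_{k-s}\rVert_\infty\lesssim 2^{k-s}$ and $\lvert I\rvert=2^{k+3}$, this gives $\lVert T_{\mathcal N_s(k),s}b\rVert_2^2\lesssim 2^{-s}\sum_{I\in\mathcal N_s(k)}\lVert b_{k-s}\mathbf 1_I\rVert_1$, so that $\sum_k\lVert T_{\mathcal N_s(k),s}b\rVert_2^2\lesssim 2^{-s}\lvert I_0\rvert$ by the second half of \eqref{e:b}. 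This is exactly the non-maximal bound quoted in the excerpt, with the full $2^{-s/2}$ gain. The issue is that summing these $\ell^2$ norms directly to handle the supremum only costs a factor that is too large; so the $2^{-s/3}$ in \eqref{e:N2} reflects that we spend part of the budget on the maximal function.

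The key device is a standard square-function / long-jump argument: for any choice of stopping scales, the maximal truncation is dominated by a single partial sum plus a square function of the "increments" across a lacunary sequence of scales. Concretely I would write, for each $x$, $T^{\ast}_{\mathcal N_s,s}b(x)\le \sup_{k_0}\bigl\lvert\sum_{k\le k_0}T_{\mathcal N_s(k),s}b(x)\bigr\rvert$, and then bound the latter by $\bigl(\sum_{k}\lvert T_{\mathcal N_s(k),s}b(x)\rvert^2\bigr)^{1/2}$ after pairing scales, OR—more efficiently—group the scales $k$ into blocks of length $L$ (to be optimized), estimate the supremum within each block crudely by the $\ell^1$-sum of the $L$ terms in it (this is where a factor like $L^{1/2}$ or $L$ enters), and then take an $\ell^2$-sum over blocks using the orthogonality-type bound $\sum_k\lVert T_{\mathcal N_s(k),s}b\rVert_2^2\lesssim 2^{-s}\lvert I_0\rvert$ above. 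Optimizing $L$ against the exponential gain (the true gain per scale is $2^{-s}$ total, and with $\sim 2^{-s/?}$ distributed) yields the $2^{-s/3}$; alternatively one uses the Rademacher–Menshov inequality, which replaces the maximal truncation by the square function at the cost of a $(\log(\#\text{scales}))$ factor that is then absorbed since only $O(s)$ scales actually contribute meaningfully after the geometric decay — in fact the number of relevant $k$ is controlled, so the logarithm is $O(\log s)$, harmless against $2^{-cs}$.

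More carefully, the cleanest route is: use the trivial bound $T^{\ast}_{\mathcal N_s,s}b\le\sum_k\lvert T_{\mathcal N_s(k),s}b\rvert$ combined with $\lVert T_{\mathcal N_s(k),s}b\rVert_\infty\lesssim 2^{-s}$ and $\lVert T_{\mathcal N_s(k),s}b\rVert_2^2\lesssim 2^{-s}\sum_{I\in\mathcal N_s(k)}\lVert b_{k-s}\mathbf 1_I\rVert_1=:2^{-s}a_k$ with $\sum_k a_k\le\lvert I_0\rvert$. Then split each block of scales: for those $k$ with $a_k$ large we have few of them ($\lvert I_0\rvert/a_k$ many after dyadic decomposition in the size of $a_k$), so the $\ell^1$-in-$k$ sum is cheap; for the rest we sum $\ell^2$-orthogonally. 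Balancing the $L^\infty$ bound $2^{-s}$ against the $L^2$ bound $2^{-s/2}\lvert I_0\rvert^{1/2}$ through interpolation, and accounting for the number of scales in play, delivers $2^{-s/3}\lvert I_0\rvert^{1/2}$. The main obstacle—and the genuinely new point, as the paper flags—is precisely handling the \emph{supremum} over truncation levels $\epsilon$ rather than a fixed sum: one cannot simply discard the sup, and the Rademacher–Menshov / block-summation argument must be arranged so that the number of scales contributing is bounded in a way compatible with the $2^{-s}$ mass bound, so that the logarithmic loss is subcritical. I would model this step on the argument in \cite{160901564}, adapting the lacunary-block decomposition there to the present operators $T_{\mathcal N_s(k),s}b$.
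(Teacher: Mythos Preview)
Your proposal has a genuine gap. The central difficulty you flag---passing from the non-maximal $\ell^2$ bound to the maximal truncation---is real, but the mechanism you describe does not close.

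First, the number of scales $k$ appearing in $\mathcal N_s$ is \emph{not} bounded in terms of $s$: it is of order $\log_2\lvert I_0\rvert$, which can be arbitrarily large compared to $s$. There is no ``geometric decay in $k$'' for the non-standard terms; the whole point of the standard/non-standard split is that the decay $2^{-k\epsilon/2}$ from \eqref{diagcorr} is available precisely for $\mathcal S_s$ (Proposition~\ref{p:standard}), while on $\mathcal N_s$ the best pointwise input is \eqref{e:non}, which gives only $\sum_k\lVert T_{\mathcal N_s(k),s}b\rVert_2^2\lesssim 2^{-s}\lvert I_0\rvert$ with no summability in $k$. So the assertion that ``only $O(s)$ scales contribute meaningfully'' is unjustified, and any Rademacher--Menshov or block argument over $k$ picks up a factor of $\log\lvert I_0\rvert$, which is fatal.

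Second, and more fundamentally, your scheme never invokes the off-diagonal hypothesis \eqref{e:off}. Both Rademacher--Menshov and the block/$\ell^2$-over-blocks idea require control of $\bigl\lVert\sum_k\sigma_k T_{\mathcal N_s(k),s}b\bigr\rVert_2^2$, whose expansion contains the cross terms $\langle T_{\mathcal N_s(k),s}b,\,T_{\mathcal N_s(k'),s}b\rangle$ for $k\neq k'$. The diagonal bound you cite says nothing about these; the only available tool is \eqref{e:off}. The paper's proof is organized precisely to exploit it: one first stratifies $\mathcal N_s$ by the density parameter $t$ via \eqref{e:st}, proves a Carleson measure estimate \eqref{e:CM} showing that intervals in $\mathcal N_{s,t}$ have overlap $\lesssim 2^t\lesssim 2^s$, and then layers $\mathcal N_{s,t}^{\sharp}$ into minimal strata $\mathcal M_u$, $1\le u\le u_0\simeq 2^t$. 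The Rademacher--Menshov argument is run over the layer index $u$, not the scale $k$, so the logarithm is $\log u_0\simeq t\lesssim s$, which is absorbable. The cross terms $\langle\beta_u,\beta_v\rangle$ are then bounded via \eqref{e:off} by $2^{-t-\epsilon u}\lvert I_0\rvert$, the key point being that every $I\in\mathcal M_u$ has $\lvert I\rvert\ge 2^u$, whence $\lvert I\rvert^{-\epsilon}\le 2^{-\epsilon u}$. Your proposal lacks all three ingredients: the density stratification, the Carleson/layer decomposition, and the use of \eqref{e:off}.
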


Interpolating between this bound and \eqref{e:Ninfty}  completes the proof of \eqref{e:s}, and hence the proof of Lemma~\ref{l:sparse}. 

\begin{proof}
One more definition is required to address maximal truncations.  For integers $ t $, say that $ I \in \mathcal N _{s,t}$ if $ I\in \mathcal N _{s} (k)$ for some integer $ k$ and 
\begin{equation}\label{e:st}
2 ^{-t+c} \leq \frac { \lVert b _{k-s} \mathbf{1}_I \rVert_1  } {|I|}  < 2 ^{-t+c+1}.  
\end{equation}
Notce that if $\| b _{k-s} \mathbf{1}_I \|_1 \neq 0$, we must have
\[ \frac{\| b_{k-s} \mathbf{1}_I \|_1}{|I|} \gtrsim 2^{-s}\]
for $I \in \mathcal{N}_s(k)$.
Thus, $t \leq s + c$.

We show that 
\begin{equation}\label{e:N3}
\lVert   T _{\mathcal N_{s,t} ,s } ^{\ast}  b\rVert _ 2 \lesssim  s 2 ^{-2s/5} |I_0| ^{1/2} . 
\end{equation}
A summation over $ t \leq s +c $ proves \eqref{e:N2}.

\smallskip 

Crucially, this definition supplies us with a Carleson measure estimate.  For all $ K\in \mathcal N _{s,t} $ we have 
\begin{equation}\label{e:CM}
\sum_{J\in \mathcal N _{s,t} \;:\; J\subset K} \lvert  J\rvert \lesssim 2 ^{t} \lvert  K\rvert.   
\end{equation}
Indeed, if this is not so we have for some large constant $ M$
\begin{align*}
M \cdot 2 ^{t} \lvert  K\rvert & \leq  
\sum_{J\in \mathcal N _{s,t} \;:\; J\subset K} \lvert  J\rvert 
\\
& \leq 2 ^{t+c}  \sum_{k}\sum_{J\in \mathcal N _{s,t} (k) \;:\; J\subset K} \|  b _{k-s} \mathbf{1}_J\|_1 
\lesssim 2 ^{t} \lvert  K\rvert ,  
\end{align*}
by construction, namely \eqref{e:K}, and \eqref{e:b}. This is a contradiction for large, but absolute choice of $ M$. 
Therefore \eqref{e:CM} holds.  

The Carleson measure  condition \eqref{e:CM} says that there are about $ 2 ^{t}$ overlapping intervals $ I \in \mathcal N _{s,t}$.   Set 
\begin{equation*}
F_t = \Bigl\{\sum_{I\in \mathcal N _{s,t}} \mathbf 1_{I } > C 2 ^{t} \Bigr\}. 
\end{equation*}
For a sufficiently large constant $ C$, we have $ \lvert  F_t\rvert < \tfrac 14 \lvert  I_0\rvert  $.
We claim that 
\begin{equation}\label{e:N4}
\lVert   T _{\mathcal N_{s,t} ^{\sharp} } ^{\ast}  b\rVert _ 2 \lesssim  s 2 ^{-2s/5} |I_0|^{1/2} . 
\end{equation}
where $ \mathcal N ^{\sharp} _{s,t} = \{I\in \mathcal N _{s,t} \;:\; I\not\subset F_t\}$.  
To conclude \eqref{e:N3}, we recurse inside the set $ F _{t}$, but this step is easy, and we omit the details. 
It remains to prove \eqref{e:N4}.  

\medskip 
To prove \eqref{e:N4}, we are in a position to apply the Rademacher-Menshov  Lemma~\ref{l:RM} below. It controls the maximal truncations, and its key assumption is an 
an orthogonality condition on the summands.    
To set up the application of this Lemma, we set $ \mathcal M_1$ to be the minimal elements of $ \mathcal N _{s,t} ^{\sharp}$, and inductively set $ \mathcal M _{u+1} $ to be the minimal elements of $ \mathcal N _{s,t} ^{\sharp}  \setminus \bigcup _{v=1} ^{u} \mathcal M _{v}$.  This collection will be empty for $ u > u_0 =C  2 ^{t}$.   Then set 
\begin{equation} \label{e:betadef}
\beta _{u} = \sum_{k} \sum_{I\in \mathcal M _{u} (k)} T _{I} b _{k-s},  
\end{equation}
where $\mathcal{M}_u(k) := \{ I \in \mathcal{M}_u : |I| = 2^{k+3} \}$.
We will show that 
\begin{equation}\label{e:beta}
\Bigl\lVert \sum_{u=1} ^{u_0} \sigma _u \beta _u \Bigr\rVert_2 \lesssim 2 ^{-2s/5} \lvert I_0\rvert ^{1/2}, 
\qquad  \sigma _t \in \{-1,0,1\}.  
\end{equation}
In view of \eqref{e:RM}, we then conclude \eqref{e:N4}, after factoring a $\log u_0 \simeq t \lesssim s $ 
into $ 2 ^{-2s/5}$.  

Now, we have from \eqref{e:non},  \eqref{e:betadef}, \eqref{e:b} and the definition of  $ \mathcal N _{s,t}$,
\begin{align}
\sum_{u=1} ^{u_0}
\lVert \sigma _u  \beta _u \rVert_2 ^2 
& \lesssim \sum_{u=1} ^{u_0} \sum_{k} \sum_{I\in \mathcal M _{u} (k)}  \lvert  I\rvert ^{-1} \lVert b _{k-s} \mathbf 1_{I}\rVert_2 ^2 
\\
& \leq \sum_{u=1} ^{u_0} \sum_{k} \sum_{I\in \mathcal M _{u} (k)}  \lvert  I\rvert ^{-1} \lVert b _{k-s} \mathbf 1_{I}\rVert_{\infty}
\lVert b _{k-s} \mathbf 1_{I}\rVert_1 
\label{e:diagonal}
\\
& \lesssim 2^{-s} \sum_{u=1} ^{u_0} \sum_{k} \sum_{I\in \mathcal M _{u} (k)}  \lVert b _{k-s} \mathbf 1_{I}\rVert_1 
 \label{e:bD}
 \lesssim  2 ^{-s} \lvert  I_0\rvert.   
\end{align}

For $ u  < v$, we have by the off-diagonal assumption \eqref{e:off}, 
\begin{align}
|\langle \sigma _u  \beta _u, \sigma _ v \beta _v \rangle|
& = \Biggl| \sum_{k_v } \sum_{k_u \;:\; k_u  < k_v- c} 
\sum_{J\in \mathcal M _{v} (k_v)}  \sum_{ \substack{I\in \mathcal M _{u} (k_u)\\I\subset J }}   
\langle b _{k_u -s}  , T _{I} ^{\ast} T _{J} b _{k_v -s}  \rangle  \Biggr|
\\
& \lesssim 
 \sum_{k_v } \sum_{k_u \;:\; k_u  < k_v-c} 
\sum_{J\in \mathcal M _{v} (k_v)}  \sum_{ \substack{I\in \mathcal M _{u} (k_u)\\I\subset J }}  
 \lvert  I\rvert ^{- \epsilon } |J|^{-1} \lVert b _{k_u-s} \mathbf 1_{I} \rVert _1 \| b _{k_v-s} \mathbf{1}_J \|_1 
 \\  
 & \lesssim 2 ^{-t} \sum_{k_v} \sum_{k_u \;:\; k_u  < k_v-c} \sum_{J \in \mathcal{M}_v(k_v)} \sum_{ \substack{I\in \mathcal M _{u} (k_u)\\I\subset J }} |I|^{-\epsilon} \frac{|I|}{|J|} \| b_{k_v-s} \mathbf{1}_J\|_1 
 \\&
 \lesssim 2^{-t-\epsilon u} \sum_{k_v} \sum_{J \in \mathcal{M}_{v}(k_v)} \| b_{k_v-s} \mathbf{1}_J \|_1 
 \leq 2^{-t-\epsilon u} \|b\|_1 
 \label{e:bO}
  \lesssim 2 ^{-t- \epsilon u} \lvert  I_0\rvert . 
\end{align}
The last inequalities follow from the construction, and $ \lvert  I\rvert \geq 2 ^{u} $, for $ I\in \mathcal M_u$.  

By Cauchy-Schwarz, we have 
\begin{gather*}
\Bigl\lVert \sum_{u=1} ^{4s/\epsilon} \sigma _u \beta _u \Bigr\rVert_2  
\lesssim s^{1/2} 2^{-s/2} \lvert  I_0\rvert^{1/2},
\\
\Bigl\lVert \sum_{4s /\epsilon }^{u_0} \sigma _u \beta _u \Bigr\rVert_2^2
\lesssim 2 ^{-s} \lvert  I_0\rvert + \sum_{4s/\epsilon \leq u < v \leq u_0} 2^{-t-\epsilon u} |I_0| \lesssim 2 ^{-s} \lvert  I_0\rvert,
\end{gather*}
since $t \leq s + c$.
The proof of \eqref{e:N3} is complete.

\end{proof}

\subsection{Lemmas}

We need this elementary fact, the $ \ell ^r $ bounds. 
\begin{proposition}\label{ellp}
Assuming only \eqref{diagcorr},   the operator  $\HH ^{\ast} _{a} f$ is $\ell^r(\Z)$ bounded, for $ 1< r < \infty $. 
In particular, 
\begin{equation}\label{e:ellr}
\lVert \HH ^{\ast} _{a}  \;:\; \ell ^{r} \mapsto \ell ^{r}\rVert \lesssim \max \{ r, \tfrac 1 {r-1}\} \qquad 1< r < \infty . 
\end{equation}
\end{proposition}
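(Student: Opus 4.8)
The plan is to establish the $\ell^r$ bounds by a standard Calder\'on--Zygmund-type argument, reducing everything to three inputs: an $\ell^2$ bound for the maximal truncation, a Cotlar-type inequality that dominates the maximal truncation by the linear operator plus a Hardy--Littlewood maximal function, and interpolation/duality. First I would observe that \eqref{diagcorr} alone gives an $\ell^2$ bound for the \emph{linear} operator $\HH_a f = \sum_j \mu_j * f$. Indeed, writing $K_a = \sum_j \mu_j = \sum_n \frac{a(n)}{n}\delta_n$, the diagonal decay \eqref{diagcorr} says $\widehat{\mu_j}\,\overline{\widehat{\mu_j}} = \widehat{\mu_j * \tilde\mu_j}$ is, away from the atom at $0$, of size $2^{-(1+\epsilon)j}$, so $\|\mu_j * \tilde\mu_j\|_\infty \lesssim 2^{-j}$ from the atom together with $\ell^1$-summation of the tail; hence $\|\widehat{\mu_j}\|_\infty^2 = \|\mu_j * \tilde\mu_j\|_\infty \lesssim 2^{-j}\cdot 2^j \lesssim 1$ — wait, more carefully, $\|\widehat \mu_j\|_\infty \le \|\mu_j\|_1 \lesssim 1$ trivially, and a square-function/almost-orthogonality argument using \eqref{diagcorr} gives $\|\sum_j \epsilon_j \mu_j * f\|_2 \lesssim \|f\|_2$ uniformly in signs $\epsilon_j$. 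This is the classical Cotlar--Stein-type estimate; the key point is that $\|\widehat{\mu_j} \cdot \overline{\widehat{\mu_k}}\|_\infty$ decays geometrically in $|j-k|$, which follows from \eqref{diagcorr} when $j = k$ and is trivial ($\lesssim \min(1, 2^{j-k})$ from the $\ell^1\to\ell^\infty$ bound $\|\mu_k\|_1 \lesssim 1$ and cancellation in $\mu_j$) when $j \ne k$, without even needing \eqref{e:off}.

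Next I would upgrade this to the maximal truncation. Set $\HH_a^N f = \sum_{j \ge N}\mu_j * f$ for the tail truncations along dyadic scales; the general truncations differ by a single partial block $\sum_{N \le n \le 2^{j_0}} \frac{a(n)}{n}\delta_n * f$, which is dominated pointwise by $M_{HL}f$. For the dyadic maximal truncation $\sup_N |\HH_a^N f|$ I would run the Rademacher--Menshov/square-function route: Lemma~\ref{l:RM} (the Rademacher--Menshov lemma invoked in the excerpt) reduces $\|\sup_N|\sum_{j\ge N}\mu_j * f|\|_2$ to controlling $\|\sum_j \sigma_j \mu_j * f\|_2$ uniformly in $\sigma_j \in \{-1,0,1\}$ up to a logarithmic loss — but since our almost-orthogonality gives geometric, not just summable, decay, the logarithm is absorbed and we get the clean bound $\|\HH_a^{\mathbb{Z},*}f\|_2 \lesssim \|f\|_2$. (Alternatively, a Cotlar inequality: $\HH_a^{\mathbb{Z},*}f \lesssim M_{HL}(\HH_a f) + M_{HL}f$ pointwise, using that the smooth truncation error is controlled by $M_{HL}$; this again yields the $\ell^2$ bound and in fact $\ell^r$ for all $1 < r < \infty$ directly from $\ell^r$-boundedness of $\HH_a$ and $M_{HL}$, with the stated constants.)

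With the $\ell^2$ bound in hand, I would obtain the weak-$(1,1)$ bound by the Calder\'on--Zygmund decomposition: split $f = \gamma + b$ at height $\lambda$, handle $\gamma$ via the $\ell^2$ bound (using $\|\gamma\|_2^2 \lesssim \lambda \|f\|_1$), and handle $b = \sum_Q b_Q$ by using the kernel regularity of $K_a$ away from the support of each $b_Q$ — here the needed Hörmander-type condition $\sum_{|x| > 2|y|} |K_a(x - y) - K_a(x)| \lesssim 1$ follows from $|K_a(x)| \lesssim 1/|x|$ and the smoothness built into the $\mu_j$; for the maximal truncation one uses the pointwise domination of the truncation error by $M_{HL}b$ off the exceptional set. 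Then Marcinkiewicz interpolation between weak-$(1,1)$ and $\ell^2$ gives $\ell^r$ for $1 < r \le 2$ with constant $\lesssim \frac{1}{r-1}$, and duality (noting the adjoint kernel $\tilde K_a$ satisfies the same hypotheses, since $\tilde\mu_j$ inherits \eqref{diagcorr} by symmetry of the hypothesis under conjugate-reflection) gives $1 < r < \infty$ with constant $\lesssim \max\{r, \frac{1}{r-1}\}$. The main obstacle is the maximal truncation step: ensuring that the passage from the linear operator to $\sup_N$ costs no more than the geometric decay can absorb, i.e.\ verifying that \eqref{diagcorr} supplies genuinely geometric almost-orthogonality $\|\widehat{\mu_j}\,\overline{\widehat{\mu_k}}\|_\infty \lesssim 2^{-\epsilon|j-k|}$ in both regimes $j \le k$ and $j \ge k$ — the $j \ge k$ side needing a small argument that the tail $\mu_j$ has enough cancellation against the localized $\mu_k$, which is exactly where the $1/n$ kernel weight and the oscillation encoded in \eqref{diagcorr} do the work.
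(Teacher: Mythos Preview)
The paper's argument is far shorter than your proposal and hinges on an observation you nearly made but then backed away from: \eqref{diagcorr} forces each individual block to be \emph{small} on $\ell^2$, not merely bounded. Since $\lvert\widehat{\mu_j}(\theta)\rvert^2=\widehat{\mu_j\ast\tilde\mu_j}(\theta)$, one has $\lVert\widehat{\mu_j}\rVert_\infty^2\le\lVert\mu_j\ast\tilde\mu_j\rVert_1$ (not $\lVert\cdot\rVert_\infty$---that slip is what led you to only $\lesssim 1$); the atom at $0$ contributes $\lesssim 2^{-j}$ and the off-origin part, supported in $\lvert x\rvert\lesssim 2^j$, contributes $\lesssim 2^j\cdot 2^{-(1+\epsilon)j}=2^{-\epsilon j}$. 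Thus $\lVert\mu_j\ast f\rVert_2\lesssim 2^{-\epsilon j/2}\lVert f\rVert_2$, and interpolation with the trivial $\ell^1,\ell^\infty$ bounds gives $\lVert\mu_j\ast f\rVert_r\lesssim 2^{-c(r)j}\lVert f\rVert_r$ for every $1<r<\infty$. Now simply dominate the maximal truncation pointwise by $\sum_j\lvert\mu_j\ast f\rvert+M_{HL}f$ and sum the geometric series; the constant $\max\{r,\tfrac{1}{r-1}\}$ falls out. No Cotlar--Stein, Rademacher--Menshov, Calder\'on--Zygmund, or duality is needed.

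Your own route has a genuine gap at the passage from $\ell^2$ to $\ell^r$, $r<2$. Both the H\"ormander condition for $K_a(n)=a(n)/n$ and the pointwise Cotlar inequality $\HH_a^{\ast}f\lesssim M_{HL}(\HH_a f)+M_{HL}f$ require kernel regularity that \eqref{diagcorr} does not supply. Take the paper's motivating examples $a(n)=e(n^c)$, $1<c<2$, which satisfy \eqref{diagcorr} by Lemma~\ref{check}: the phase increment $n^c-(n-1)^c\approx cn^{c-1}\to\infty$ is equidistributed modulo $1$, so $\lvert K_a(x)-K_a(x-1)\rvert\gtrsim 1/x$ on a positive-density set of integers and the H\"ormander sum diverges. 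The Cotlar inequality rests on the same smoothness (to compare sharp with smooth truncations) and is equally unavailable. Your off-diagonal claim $\lVert\widehat{\mu_j}\overline{\widehat{\mu_k}}\rVert_\infty\lesssim 2^{-\lvert j-k\rvert}$ ``from cancellation in $\mu_j$'' is likewise unjustified without \eqref{e:off}, though this becomes moot once one has the per-block decay above, which yields $\lesssim 2^{-\epsilon(j+k)/2}$ directly.
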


\begin{proof}
We first observe that $\mu_j * \tilde{\mu_j}(0) = \| \mu_j \|_2^2 \lesssim 2^{-j}$; together with the assumed bound on $|\mu_j * \tilde{\mu_j}|$ away from zero, this implies that
\[ \| \mu_j * f\|_2^2 \lesssim 2^{-j} \|f\|_2^2 + 2^{-\epsilon j} \sum_{\Z} |f|(n) M_{HL} f(n) \lesssim 2^{-\epsilon j} \|f\|_2^2.\]
Here, we used that $\mu_j * \tilde{\mu_j}$ is supported in $\{ |x| \lesssim 2^j \}$.  
But we clearly have the $ \ell ^{1} $ and $ \ell ^{\infty }$ estimate below without decay: 
\begin{equation*}
\lVert \mu _j  \;:\; \ell ^{s} \mapsto \ell ^{s} \rVert  \lesssim 1, \qquad s=1, \infty .  
\end{equation*}
Interpolating, we see that $ \lVert \mu _j \;:\; \ell ^{r} \mapsto \ell ^{r}\rVert \lesssim 2 ^{- 2 \epsilon j \frac {r-1}{r} } $ for $ 1< r < 2$. 
Majorizing
\[ \HH f \leq \sum_{j=0}^\infty |\mu_j*f| + M_{HL} f \]
and taking $\ell^r$ norms yields the result. In fact, one may establish a sparse $(r,r)$ bound for $r(\epsilon) < r < 2$ due to the power gain in scale.
\end{proof}

Mentioned above, this is a variant of the Rademacher-Menshov inequality that we used to control maximal 
truncations.   This has been observed many times. See \cite{MR2403711}*{Theorem 10.6}.

\begin{lemma}\label{l:RM}  Let $ (X, \mu )$ be a measure space, and $ \{ \phi _j \;:\; 1\leq j \leq N\}$ a sequence of functions 
which satisfy the Bessel type inequality below, for all sequences of coefficents $c_j \in \{ 0, \pm 1\}$, 
\begin{equation}\label{e:bessel}
\Bigl\lVert \sum_{j=1} ^{N}  c_j \phi _j \Bigr\rVert _{L ^2 (X)} \leq A .  
\end{equation}
Then, there holds 
\begin{equation}\label{e:RM}
\Bigl\lVert\sup _{1< n \leq N} 
\Bigl\lvert 
 \sum_{j=1} ^{n}   \phi _j
\Bigr\rvert
\Bigr\rVert _{L ^2 (X)} \lesssim A   \log(2+ N) .  
\end{equation}

\end{lemma}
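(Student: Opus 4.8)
The plan is to prove the Rademacher--Menshov inequality \eqref{e:RM} by the standard dyadic-chaining argument. First I would reduce to the case $N = 2^m$ by simply extending the sequence with zero functions, which does not affect \eqref{e:bessel} and only changes $\log(2+N)$ by a bounded factor. Next, for each $n$ with $1 < n \leq 2^m$, I would write the partial sum $S_n := \sum_{j=1}^n \phi_j$ as a sum of at most $m+1$ ``dyadic block sums'' $B_{\ell,i} := \sum_{j \in I_{\ell,i}} \phi_j$, where $I_{\ell,i}$ are the standard dyadic subintervals of $[1,2^m]$: the binary expansion of $n$ picks out, for each scale $\ell = 0, 1, \dots, m$, at most one dyadic interval of length $2^\ell$, and $S_n$ is the sum of the corresponding $B_{\ell,i}$.

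The key estimate is then the pointwise bound
\begin{equation*}
\sup_{1 < n \leq 2^m} |S_n| \leq \sum_{\ell=0}^{m} \Bigl( \sum_{i} |B_{\ell,i}|^2 \Bigr)^{1/2},
\end{equation*}
since at each scale $\ell$ at most one term $B_{\ell,i}$ appears in $S_n$, so $|{\text{that term}}| \leq (\sum_i |B_{\ell,i}|^2)^{1/2}$, and then one sums over the $m+1$ scales. Taking $L^2(X)$ norms and using the triangle inequality in $\ell^2_\ell$ of $L^2(X)$ norms reduces matters to showing, for each fixed scale $\ell$,
\begin{equation*}
\Bigl\lVert \Bigl( \sum_i |B_{\ell,i}|^2 \Bigr)^{1/2} \Bigr\rVert_{L^2(X)}^2 = \sum_i \lVert B_{\ell,i} \rVert_{L^2(X)}^2 \leq A^2.
\end{equation*}
This last inequality is exactly where the hypothesis \eqref{e:bessel} enters: the dyadic intervals $I_{\ell,i}$ at a fixed scale $\ell$ are disjoint, so for any choice of signs $\varepsilon_i \in \{\pm 1\}$ the function $\sum_i \varepsilon_i B_{\ell,i}$ is of the form $\sum_{j} c_j \phi_j$ with $c_j \in \{0, \pm 1\}$, hence has $L^2(X)$ norm at most $A$; averaging over independent random signs (or just expanding the square and noting the cross terms cancel in expectation) gives $\sum_i \lVert B_{\ell,i}\rVert_2^2 \leq A^2$. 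Combining, $\lVert \sup_n |S_n| \rVert_2 \leq (m+1) A \lesssim A \log(2+N)$.

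The main obstacle — really the only point requiring care — is the bookkeeping of the dyadic decomposition of an arbitrary partial sum: one must verify that the greedy/binary decomposition of the index interval $[1,n]$ uses at most one dyadic interval per scale, so that the scale-$\ell$ contribution is controlled by the $\ell^2$ aggregate $(\sum_i |B_{\ell,i}|^2)^{1/2}$ rather than by the full sum. Once this combinatorial fact is in place, the analytic input is just the orthogonality-type bound \eqref{e:bessel} applied scale by scale, and the logarithmic loss is precisely the number of scales. Everything else is routine, so I would present the chaining decomposition carefully and keep the rest brief.
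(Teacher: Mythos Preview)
Your argument is correct and is the standard dyadic-chaining proof of the Rademacher--Menshov inequality. The paper does not actually supply a proof of this lemma: it simply records the statement as well known and refers the reader to \cite{MR2403711}*{Theorem 10.6}, so there is no in-paper argument to compare against; your write-up is exactly the kind of proof that reference contains.
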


\section{Specializing to Hardy Fields}
We now introduce Hardy fields and some of their properties. We refer the reader to e.g.\ \cite{14074736} and the references contained therein for further discussion of Hardy field functions and their applications to ergodic theory; in particular, the following introduction to Hardy field functions is taken from \cite{14074736}*{\S 2}.

We call two real valued functions of one real variable that are continuous for large values of $s \in \RR$ \emph{equivalent} if they coincide for large $s\in\RR$. We say that a property holds for large $s$ (or eventually) if it holds for every $s$ in an interval of the form $[s_0,\infty)$. The equivalence classes under this relation are called \emph{germs}. The set of all germs we denote by $B$ which is a ring.

\begin{definition}
A \emph{Hardy field} is a subfield of $B$ which is closed under differentiation. The union of all Hardy fields is denoted by $\U$.
\end{definition}
One can show that $\mathcal{U}$ contains the class $\mathcal{L}$ of logarithmico-exponential functions of Hardy, i.e., the class of functions which can be obtained by finitely many combinations of real constants, the variable $s$, $\log$, $\exp$, summation and multiplication. Thus, for example, it contains functions of the form $s^\alpha = \exp( \alpha \log s)$, $\alpha\in\RR$.

Another property of Hardy fields is that each Hardy field is totally ordered with respect to the order $<_{\infty}$ defined by
$$
f  <_{\infty} g \quad \Longleftrightarrow \quad f(s)< g(s) \quad \text{for all large } s.
$$
Since the class $\mathcal{L}$ belongs to every maximal Hardy field, we conclude that every element of $\U$ is comparable to every logarithmico-exponential  function. In particular, we can define the \emph{type} of a function $p\in U$ to be
$$
t(p):=\inf\{\alpha\in \RR:\, |p(s)|< s^\alpha \text{ for large }s\}.
$$
We say that $p$ is \emph{subpolynomial} if $t(p)<+\infty$, i.e., if $|p|$ is dominated by some polynomial. In particular, for eventually positive subpolynomial $p$ with finite type there is $\alpha\in \RR$ such that for every $\eta$ there is an $s_0$ so that \[ s^{\alpha-\eta}<p(s)<s^{\alpha+\eta}\]
holds for every $s>s_0$. Note that considering eventually positive $p$ is not  a restriction since every nonzero $p\in\U$ is either eventually positive or eventually negative.

We now consider subpolynomial elements of $\U$ with positive non-integer type, such as for example $p(s)=5s^\pi+s\log s.$ More precisely, we introduce the following classes.

\begin{definition}
For $\delta\in(0,1/2)$, $M\geq 1$ and $m \geq 1$ denote by $\n_{\delta, M, m}$ the set of all $p\in \U$ so that there exist $\alpha \in [\delta, 1-\delta]$ and $\eta \ll_{\delta,m} 1$ with
\begin{equation}\label{M_delta,M,m}
\frac{1}{M}s^{m+\alpha-\eta-j}\leq p^{(j)}(s)\leq M s^{m+\alpha+\eta-j} \text{ for all }s\geq 1 \text{ and } j=0,\ldots,m+2.
\end{equation}
We say that $p$ is ``admissible" if it is in some class $\n_{\delta,M,m}$.
\end{definition}
\begin{remark}
By l'H\^{o}pital's rule, the condition $s^{m+\alpha - \eta} \lesssim p(s) \lesssim s^{m+\alpha + \eta}$ is enough to guarantee that
\[ s^{m+\alpha -j - \eta} \lesssim_j p^{(j)}(s) \lesssim_j s^{m+\alpha -j + \eta} \]
for each $j$. We simply choose to make the implicit constant uniform over the first $m+2$ derivatives.
\end{remark}

We have the following lemma.

\begin{lemma}\label{check}
For admissible $p$, the measures
\[ \mu_j^p := \sum_{2^{j-1} < n \leq 2^j} \frac{e(p(n))}{n} \delta_n\]
satisfy the hypotheses of Theorem~\ref{t:a}, namely \eqref{diagcorr} and \eqref{e:off}. 
\end{lemma}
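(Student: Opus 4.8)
The plan is to verify the two kernel estimates \eqref{diagcorr} and \eqref{e:off} for the measures $\mu_j^p$ by reducing everything to van der Corput / stationary phase estimates for the exponential sums that arise when one convolves these measures. Throughout, one should think of $\mu_j^p$ as essentially $2^{-j}$ times the normalized counting measure on $(2^{j-1}, 2^j]$ twisted by the phase $e(p(n))$; the factor $2^{-j}$ accounts for the $1/n$ weight, which on this dyadic block is comparable to $2^{-j}$ (and whose smooth variation is harmless after summation by parts).

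For the diagonal estimate \eqref{diagcorr}, I would compute, for $x \neq 0$,
\[
\mu_j^p * \widetilde{\mu_j^p}(x) = \sum_{n, n-x} \frac{e(p(n) - p(n-x))}{n(n-x)},
\]
where the sum is over $n$ with both $n$ and $n - x$ in $(2^{j-1}, 2^j]$. Pulling out the weight $\approx 2^{-2j}$ (and using summation by parts to handle its variation), the task becomes to bound the exponential sum $\sum_n e(p(n) - p(n-x))$ by $O(2^{(1-\epsilon) j})$ uniformly in $x \neq 0$ with $|x| \lesssim 2^j$. The phase $\phi(n) := p(n) - p(n-x)$ has first derivative $\phi'(n) = p'(n) - p'(n-x)$, which by the mean value theorem is $\approx |x| \cdot p''(\xi) \approx |x| \, 2^{j(m+\alpha-2)}$ by \eqref{M_delta,M,m}; this is $\gtrsim 2^{j(m+\alpha-2)} \gtrsim 2^{-j(1-\delta)}$ since $m \geq 1$ and $\alpha \geq \delta$. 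A first-derivative van der Corput estimate then gives a gain, and iterating to second derivatives ($\phi''(n) \approx |x| \, 2^{j(m+\alpha-3)}$, together with the truncated sum over $n$) yields the required power saving $2^{-\epsilon j}$ beyond the trivial bound; the non-integrality of $\alpha$ is what prevents $\phi$ from degenerating to a polynomial of controlled degree where such sums could be large. One also records the companion fact $\mu_j^p * \widetilde{\mu_j^p}(0) = \|\mu_j^p\|_2^2 \approx 2^{-j}$, needed in Proposition~\ref{ellp}.

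For the off-diagonal estimate \eqref{e:off}, with $k + C \leq j$, I would write
\[
\mu_j^p * \widetilde{\mu_k^p}(x) = \sum_{m \sim 2^k} \frac{\overline{e(p(m))}}{m} \cdot \frac{e(p(x+m))}{x+m},
\]
a sum of length $\approx 2^k$ with weight $\approx 2^{-j} 2^{-k}$, so it suffices to bound the exponential sum $\sum_{m \sim 2^k} e(p(x+m) - p(m))$ by $2^{-\epsilon k} 2^{k}$, i.e. to gain a power of $2^k$. Here the phase $\psi(m) := p(x+m) - p(m)$ has $\psi'(m) = p'(x+m) - p'(m)$; since $|x|$ can be as large as $2^j \gg 2^k$, one expects $p'(x+m)$ and $p'(m)$ to be of very different sizes, so $|\psi'(m)| \approx \max\{ p'(|x|), p'(2^k)\} \gtrsim 2^{k(m+\alpha-1)}$, which is $\gg 1$, and a first-derivative van der Corput bound already gives decay; more carefully one uses second-derivative information $\psi''(m) = p''(x+m) - p''(m)$, controlled via \eqref{M_delta,M,m}, to extract the clean power $2^{-\epsilon k}$ with $\epsilon$ depending on $\delta$. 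One must be slightly careful in the regime $|x| \approx 2^j$ versus $|x| \ll 2^j$, and in the degenerate-looking regime where $x$ is comparable to $2^k$ — but since $j \geq k + C$ with $C$ large, $|x|$ and $2^k$ cannot be too close relative to the scales of the derivatives, and the estimate is uniform.

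The main obstacle, and the place where the admissibility hypothesis \eqref{M_delta,M,m} does real work, is making the van der Corput estimates \emph{quantitatively} yield a fixed power gain $2^{-\epsilon j}$ (respectively $2^{-\epsilon k}$) with $\epsilon = \epsilon(\delta) > 0$ rather than merely some unquantified decay: one needs lower bounds on $|\phi'|$ or $|\phi''|$ (and on truncated second derivative sums) that are \emph{polynomial in the scale} with exponent bounded away from $0$, and this is exactly guaranteed by the two-sided bounds on $p^{(j)}$ for $j = 0, \dots, m+2$ together with $\alpha \in [\delta, 1-\delta]$ and $m \geq 1$ (so that $m + \alpha - 2 \geq \delta - 1$, etc.). A secondary technical point is dealing with the smoothly varying weights $1/n$, $1/(n-x)$, $1/(x+m)$; these are handled by splitting the dyadic block into $O(\log(1/\epsilon))$ pieces or by a routine summation-by-parts argument, contributing only harmless constants. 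Once both \eqref{diagcorr} and \eqref{e:off} are in hand, the lemma follows, and Theorem~\ref{t:a} applies to $\HH_p^*$, which is the content of Theorem~\ref{mainP}.
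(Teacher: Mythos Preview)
Your overall strategy---strip the smooth weights $1/n$ by summation by parts and then bound the resulting exponential sums via van der Corput---is exactly the paper's approach, and your identification of the relevant phases $\phi(n)=p(n)-p(n-x)$ and $\psi(m)=p(x+m)-p(m)$ is correct.

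There is one recurring misconception worth fixing. You repeatedly invoke a ``first-derivative van der Corput estimate'' on the grounds that $|\phi'|$ or $|\psi'|$ is large (e.g.\ ``$|\psi'(m)|\gtrsim 2^{k(m+\alpha-1)}\gg 1$, and a first-derivative van der Corput bound already gives decay''). That is not how the first-derivative (Kusmin--Landau) test works: it requires $f'$ to be monotone with $\|f'\|_{\mathbb R/\mathbb Z}\geq\lambda$, and yields $\lesssim 1/\lambda$. A derivative that is merely large, or that sweeps through many integers over the summation range, gives no cancellation by this route. In the off-diagonal case with $m=1$, for instance, $\psi'(m)\approx p'(x+m)\approx 2^{j\alpha}$ is large and varies by $\approx 2^{k}\cdot 2^{j(\alpha-1)}$, so the first-derivative test is simply inapplicable. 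The same issue arises in the diagonal case for large $|x|$.

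The paper avoids this entirely by going straight to the $(m+1)$st-derivative van der Corput estimate (Lemma~\ref{l:vdc} with $k=m+1$): for the off-diagonal case with $m=1$ one has $|\psi''(n)|\approx |p''(n)|\approx 2^{k(\alpha-1)}$ (the term $p''(x+n)\approx 2^{j(\alpha-1)}$ is negligible since $\alpha<1$), and for the diagonal case $|\phi''(n)|\approx |x|\,2^{j(\alpha-2)}$ by the mean value theorem; in both cases the second-derivative test yields the power saving $2^{-\epsilon k}$ (resp.\ $2^{-\epsilon j}$) with $\epsilon$ depending only on $\delta$. You do eventually arrive at the second-derivative computation as the ``more careful'' argument, and those computations are correct, so your sketch lands in the right place---but you should excise the first-derivative heuristics, which are misleading here and would not survive a careful write-up.
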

\begin{remark}
As we will see from the proof, the $\epsilon > 0$ of gain will depend only on $\delta,M,m$.
\end{remark}

For pointwise convergence reasons, we will need the following technical complement to Lemma \ref{check}:
\begin{lemma}\label{check1}
For any $\kappa > 0$,
\[ \mu_{j,\kappa}^p := \sum_{(1+\kappa)^{j-1} < n \leq (1+\kappa)^j} \frac{e(p(n))}{n} \delta_n\]
satisfies
\begin{equation}\label{e:technical}
|\mu_{j,\kappa}^p * \widetilde{ \mu_{j,\kappa}^p  }| \lesssim (1+\kappa)^{-j} \delta_0 + (1+\kappa)^{-(1+\epsilon)j}.
\end{equation}
\end{lemma}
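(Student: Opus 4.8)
The plan is to estimate the convolution $\mu_{j,\kappa}^p * \widetilde{\mu_{j,\kappa}^p}$ pointwise, separating the value at $0$ from the values away from $0$. At $x = 0$ we simply have
\[
\mu_{j,\kappa}^p * \widetilde{\mu_{j,\kappa}^p}(0) = \| \mu_{j,\kappa}^p \|_2^2 = \sum_{(1+\kappa)^{j-1} < n \leq (1+\kappa)^j} \frac{1}{n^2} \lesssim (1+\kappa)^{-j},
\]
which accounts for the $(1+\kappa)^{-j}\delta_0$ term. For $x \neq 0$ we expand
\[
\mu_{j,\kappa}^p * \widetilde{\mu_{j,\kappa}^p}(x) = \sum_{\substack{m - n = x \\ m,n \in ((1+\kappa)^{j-1}, (1+\kappa)^j]}} \frac{e(p(m) - p(n))}{mn},
\]
and, since all the relevant $m,n$ are comparable to $(1+\kappa)^j =: N$, it suffices to bound $N^{-2}$ times the oscillatory sum $\sum_n e(p(n+x) - p(n))$, where $n$ ranges over an interval $I$ of length $\lesssim N$ (intersected with the constraint that $n+x$ also lies in the dyadic-type block).

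The main step is a van der Corput / exponential sum estimate for $\sum_{n \in I} e(\phi_x(n))$ with $\phi_x(n) := p(n+x) - p(n)$. By the mean value theorem $\phi_x'(n) = x \, p'(\xi)$ for some $\xi \asymp N$, and differentiating once more, $\phi_x''(n) \asymp x \, p''(\xi) \asymp x N^{m+\alpha - 2}$ using the Hardy-field derivative bounds \eqref{M_delta,M,m} (with $\alpha \in [\delta, 1-\delta]$, so $\phi_x''$ is genuinely of size $\asymp |x| N^{m+\alpha-2}$ and does not vanish or blow up). By the second-derivative van der Corput estimate, $\big| \sum_{n \in I} e(\phi_x(n)) \big| \lesssim |I| \, (|x| N^{m+\alpha-2})^{1/2} + (|x| N^{m+\alpha-2})^{-1/2}$. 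When $|x| \geq N^{2-m-\alpha+\rho}$ for a small $\rho>0$ the second term dominates and gives a gain; when $|x| < N^{2-m-\alpha+\rho}$ — in particular for $1 \leq |x|$ — one instead uses that $|x|$ is an integer, so $|x| \geq 1$, and applies the first-derivative (or a higher-derivative) van der Corput bound together with the fact that $m + \alpha - 1 > 0$, i.e. $\phi_x'$ is monotone and has total variation $\asymp |x| N^{m+\alpha-1} \gg 1$ across $I$; this yields $\big| \sum_{n \in I} e(\phi_x(n)) \big| \lesssim (|x| N^{m+\alpha-1})^{\rho'} \lesssim N^{1 - \epsilon'}$ for suitable $\rho', \epsilon' > 0$ since $m + \alpha - 1$ is bounded away from $0$ and $1$. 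Either way one obtains $\big|\mu_{j,\kappa}^p * \widetilde{\mu_{j,\kappa}^p}(x)\big| \lesssim N^{-2} \cdot N^{1-\epsilon} = (1+\kappa)^{-(1+\epsilon)j}$ uniformly in $x \neq 0$, which is exactly \eqref{e:technical}. (This is essentially the same computation as in the proof of Lemma~\ref{check} for the diagonal estimate \eqref{diagcorr}, just with $2$ replaced by $1+\kappa$; the point of stating it separately is that the base of the geometric dissection is now an arbitrary $1+\kappa$.)

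The step I expect to be the main obstacle is organizing the van der Corput estimates so that a single $\epsilon > 0$ works uniformly over all $x \neq 0$: the natural "stationary phase" regime ($|x|$ large, where $\phi''$ is big) and the "small $x$" regime ($|x|$ of size $O(N^{2-m-\alpha+\rho})$, including $|x|=1$) require different van der Corput lemmas, and one has to check the crossover. The saving grace is that $\alpha$ is bounded inside $[\delta, 1-\delta]$, so $m+\alpha-1$ and $m+\alpha-2$ are both uniformly bounded away from the integers, which is precisely what makes the exponential sums nondegenerate; the constants, and the final $\epsilon$, then depend only on $\delta, M, m$. One minor technical point: the summation range for $n$ is $((1+\kappa)^{j-1}, (1+\kappa)^j] \cap ((1+\kappa)^{j-1} - x, (1+\kappa)^j - x]$, which is still a single interval of length $\lesssim N$, so the van der Corput bounds apply without change, and the $\frac{1}{mn}$ weight, being essentially constant $= N^{-2}(1 + O(\kappa))$ on the block, can be pulled out (or handled by a harmless summation-by-parts) without affecting the exponent.
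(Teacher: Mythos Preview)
Your overall strategy and your parenthetical summary are correct and match the paper exactly: the paper's proof of Lemma~\ref{check1} is literally ``the proof is similar [to Lemma~\ref{check}]; the details are left to the reader,'' i.e.\ it is the diagonal case of Lemma~\ref{check} with base $1+\kappa$ in place of $2$.

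However, the explicit computation you wrote contains a systematic slip in the mean-value step. Since $\phi_x(n)=p(n+x)-p(n)$, differentiating in $n$ gives $\phi_x'(n)=p'(n+x)-p'(n)=x\,p''(\xi)$, not $x\,p'(\xi)$; more generally $\phi_x^{(k)}(n)=x\,p^{(k+1)}(\xi)\asymp|x|\,N^{m+\alpha-k-1}$. In particular $\phi_x''\asymp|x|\,N^{m+\alpha-3}$, one power of $N$ smaller than what you wrote. With your (incorrect) exponent the second-derivative test would actually fail for $m=1$: the term $|I|\,\lambda^{1/2}\asymp N\cdot(|x|N^{\alpha-1})^{1/2}$ can exceed $N$ when $|x|$ is near $N$. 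The regime-splitting and the first-derivative argument that follow are built on this wrong exponent and are not quite right as stated (the claimed bound $(|x|N^{m+\alpha-1})^{\rho'}$ for the exponential sum is not a conclusion of any standard van der Corput lemma). For $m\ge 2$ the second-derivative test is in any case insufficient even with the corrected exponent.

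The fix --- which is exactly what the paper does in Lemma~\ref{check} --- is simpler than your two-regime argument: apply Lemma~\ref{l:vdc} with $k=m+1$, so that the controlling derivative is $\phi_x^{(m+1)}\asymp|x|\,N^{\alpha-2}$. Since $\alpha\in[\delta,1-\delta]$, all three terms in \eqref{eq:14} are then $\lesssim N^{1-\epsilon}$ uniformly over $1\le|x|\le N$, with $\epsilon$ depending only on $\delta,M,m$ (the first term is largest at $|x|=N$ and contributes an exponent $1-(1-\alpha)/(K-2)$; the third is largest at $|x|=1$ and contributes $1-2(m+\alpha-1)/K$). No case analysis on $|x|$ is needed.
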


We defer the proofs of our technical lemmas to the following subsection, and complete the proof of pointwise convergence now.

\begin{proof}[Proof of Theorem \ref{main}, assuming Lemmas \ref{check} and \ref{check1}]
Let $p$ be an admissible Hardy field function, thus $p \in \n_{\delta,M,m}$ for some $\delta, M, m$.
By Lemma \ref{check},  Theorem~\ref{t:a}, and the Calder\'{o}n transference principle \cite{MR0227354}, we know that the maximal function
\[ \HH_p^*f := \sup_N \left| \sum_{n=N}^\infty \frac{e(p(n))}{n} \tau^n f \right|\]
is weakly bounded on $L^r(X), \ 1 \leq r < \infty$. By a standard density argument, it therefore suffices to prove pointwise convergence for simple (bounded) functions, $g$. 
In fact, by \cite{MR1325697}*{Lemma 1.5}, it suffices to prove only that for each simple $g$, and each $\kappa > 1$, the limit
\[ \lim_{ j \to \infty } \sum_{ (1+\kappa)^j < n} \frac{e(p(n))}{n} \tau^n g = 0 \qquad  \mu-\text{a.e.}\]
But this is straightforward.  The technical estimate \eqref{e:technical} implies 
\[ \sigma (g,j) := 
\Bigl\lVert \sum_{ (1+\kappa)^j < n \leq (1+\kappa)^{j+1} } \frac{e(p(n))}{n} \tau^n g  \Bigr\rVert
\lesssim (1+\kappa)^{-\epsilon j} \| g\|_2,\]
and thus
\begin{align*}
\Bigl\lVert  \sum_{(1+\kappa)^j < n} \frac{e(p(n))}{n} \tau^n g \Bigr\rVert_2 &\leq \sum_{ l =j}^\infty  \sigma (g,l) \lesssim (1+\kappa)^{-\epsilon j/2} \|g\|_2. 
\end{align*}
Consequently,
\[ \sum_{j =0 }^\infty \left| \sum_{ (1+\kappa)^{j-1} < n } \frac{e(p(n))}{n} \tau^n g \right|^2 \]
is an integrable function, which proves our claim. 
\end{proof}

\subsection{The Proof of Lemmas \ref{check} and \ref{check1}}
In what follows, we will need the following result of van der Corput which
appears in \cite{MR1545013}*{Satz 4}
\begin{lemma} \label{l:vdc}
  Let $k \geq 2$ be an integer and put $K = 2^k$. Suppose that $a \le b
  \le a + N$ and that $f:[a, b] \to \mathbb R$ has continuous $k$th
  derivative that satisfies the inequality $0 < \lambda \leq
  |f^{(k)}(x)| \leq h\lambda$ for all $x \in [a, b]$.

  Then
  \begin{equation}
   \left| \sum_{a \le n \le b} e (f(n))  \right| \lesssim hN\big( \lambda^{1/(K - 2)} +
    N^{-2/K} + (N^k\lambda)^{-2/K} \big).\label{eq:14}
  \end{equation}
\end{lemma}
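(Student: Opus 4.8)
Proof plan for Lemma \ref{l:vdc} (van der Corput's estimate).

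The plan is to reduce to the classical van der Corput exponential sum estimate for the $k$-th derivative and iterate the "Weyl differencing" step. First I would recall the base case $k=2$: the second-derivative test states that if $\lambda \le |f''(x)| \le h\lambda$ on $[a,b]$ with $b \le a+N$, then $\bigl| \sum_{a\le n\le b} e(f(n)) \bigr| \lesssim h\lambda^{1/2} N + \lambda^{-1/2}$. This is proved by splitting the range into $\lesssim h\lambda^{1/2}N + 1$ subintervals on which $f'$ varies by at most $\lambda^{1/2}$, and on each such subinterval applying the Kusmin--Landau inequality $\bigl|\sum e(f(n))\bigr| \lesssim \|f'\|^{-1}$ together with the trivial bound. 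This gives $\bigl|\sum e(f(n))\bigr| \lesssim h\lambda^{1/2}N + \lambda^{-1/2} \lesssim hN(\lambda^{1/(K-2)} + N^{-2/K} + (N^k\lambda)^{-2/K})$ when $k=2$, $K=4$, since $\lambda^{1/(K-2)} = \lambda^{1/2}$ absorbs the first term and, after noting $N^{-2/K} = N^{-1/2}$, the remaining $\lambda^{-1/2} = N\cdot N^{-1}\lambda^{-1/2} \le N\cdot(N^2\lambda)^{-1/2} = N(N^k\lambda)^{-2/K}$.

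Next I would run the induction on $k$. Assume the estimate for $k-1$ (with exponent $2^{k-1}$) and apply the Weyl--van der Corput inequality: for any $1 \le H \le N$,
\[
\Bigl| \sum_{a\le n\le b} e(f(n)) \Bigr|^2 \lesssim \frac{N^2}{H} + \frac{N}{H}\sum_{1\le h\le H} \Bigl| \sum_{n \in I_h} e(f(n+h)-f(n)) \Bigr|,
\]
where $I_h \subset [a,b]$ has length $\le N$. The function $g_h(x) := f(x+h)-f(x)$ has $(k-1)$-st derivative $g_h^{(k-1)}(x) = f^{(k)}(\xi)h$ for some $\xi$, hence $h\lambda \le |g_h^{(k-1)}(x)| \le h\cdot h\lambda$ — i.e. the parameters become $\lambda' = h\lambda$ with the same shape factor $h$ (up to constants). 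Feeding this into the inductive hypothesis for each $h$, summing the resulting geometric-type sum in $h$ over $1 \le h \le H$, taking square roots, and then optimizing the choice of $H$ (a power of $\lambda$ and $N$ chosen to balance the $N^2/H$ term against the main contribution) yields \eqref{eq:14} with $K = 2^k$. The bookkeeping of the three terms $\lambda^{1/(K-2)}$, $N^{-2/K}$, $(N^k\lambda)^{-2/K}$ through this optimization is the routine-but-fiddly part; it is exactly the standard computation in the theory of exponent pairs, where $k$ applications of the "$A$-process" (Weyl differencing) turn the trivial pair into $(\frac{1}{2^k-2}, 1 - \ldots)$.

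The main obstacle is simply the careful tracking of the exponents through the induction so that the three-term form \eqref{eq:14} reproduces itself at each stage — in particular verifying that the $N^{-2/K}$ term (coming from the $N^2/H$ loss in Weyl differencing) and the $(N^k\lambda)^{-2/K}$ term (coming from the endpoint of the inductive estimate) transform correctly under $H$-optimization, and that the shape constant stays $\lesssim h$ rather than degrading. Since this is a classical result cited verbatim from \cite{MR1545013}*{Satz 4}, the cleanest route in the write-up is to cite the statement directly and refer to a standard reference (e.g. Graham--Kolesnik or Montgomery) for the derivation rather than reproducing it; I would only include the inductive skeleton above if a self-contained treatment is desired.
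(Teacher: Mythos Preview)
The paper does not prove this lemma at all; it simply states it and cites van der Corput's original paper \cite{MR1545013}*{Satz 4} as the source. Your final suggestion --- to cite the statement directly rather than reproduce the derivation --- is exactly what the paper does, so in that sense your proposal matches.

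Your sketched inductive argument via Weyl differencing is the standard route and is correct in outline. One small point to watch if you do write it out: after differencing, the $(k-1)$-st derivative of $g_h(x)=f(x+h)-f(x)$ satisfies $h\lambda \le |g_h^{(k-1)}(x)| \le h\cdot h\lambda$ only because the upper bound on $|f^{(k)}|$ is $h\lambda$, so the new shape factor is again $h$ (not ``the same shape factor up to constants'' --- it is literally $h$); keeping this straight is what prevents the constant from degrading under iteration.
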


With this tool in hand, we are prepared for the proof of our technical lemmas.
\begin{proof}[Proof of Lemma \ref{check}]
We begin with the case where $k < j$, and seek to prove that
\[ \left| \sum_{ 2^{k-1}<n\leq 2^k, 2^{j-1}<n+x\leq 2^j} \frac{ e(p(n+x) - p(n))}{(n+x)n} \right| \lesssim 2^{-\epsilon k -j }.\]
Set
\[ A := \max\{ 2^{k-1}, 2^{j-1} - x \}, \ B := \min\{ 2^{k}, 2^{j} - x \}.\]
Note that $B-A \lesssim 2^k$.
By summation by parts, it suffices to show that
\[ \max_{ A< K \leq B} \left| \sum_{A < n \leq K} e(p(n+x) - p(n)) \right| \lesssim 2^{(1-\epsilon)k}.\]
We begin with the case $m = 1$. Note that we may assume that $K -A \gtrsim 2^{(1-\epsilon)k}$.

For large enough $k$, the phase $f(n) = f_x(n) := p(n+x) - p(n)$ has second derivative
\[ (2^k)^{\alpha - 1 - \eta} \lesssim_M |f''(n)| \lesssim_M (2^k)^{\alpha -1 + \eta} \]
for $A < n \leq B$.
By Lemma \ref{l:vdc},
\[ \max_{A < K \leq B} \left| \sum_{A < n \leq K} e(f(n)) \right| \lesssim 2^{(1+2\eta - \epsilon)k},\]
for some $\epsilon = \epsilon(\alpha)$ bounded away from zero. 

We next turn to the second part of the lemma, where we consider diagonal interactions, $j = k$, evaluated at $ x \neq 0$.

In this case, by the mean-value theorem, the phase $f(n)$ has second derivative
\[ |x| (2^j)^{\alpha - 2 -\eta} \lesssim_M |f''(n)| \lesssim_M |x| (2^j)^{\alpha -2 + \eta}.\]
By Lemma \ref{l:vdc},
\[ \left| \sum_{A < n \leq K} e(f(n)) \right| \lesssim 2^{(1+2\eta - \epsilon)j},\]
for some $\epsilon = \epsilon(\alpha)$ bounded away from zero.

The $m \geq 2$ cases follow similarly from the $(m+1)$th Van der Corput lemma.
\end{proof}

The proof of Lemma \ref{check1} is similar; the details are left to the reader.

\section{The Random One-sided Hilbert Transform}
The goal of this section is to prove Theorem \ref{mainR}, reproduced below for the reader's convenience. First, we recall the $\{X_n\}$:
bounded, independent, and mean-zero random variables on a probability space $\Omega$. Then:

\begin{theorem} 
Let $\{ X_n \}$ be  collection of uniformly  bounded, independent,  mean-zero random variables. 
Define 
\[ \HH_X^{\mathbb{Z},*} f := \sup_{ N \geq 1} \left| 
\sum_{n=N}^{\infty} \frac{X_n}{n} f(x-n) \right| .\]
Almost surely, there holds 
\[ \lVert \mathcal H ^{\mathbb Z ,\ast } _X  \;:\; (1,r)\rVert \lesssim_\omega \frac 1 {r-1}, \qquad 1< r < 2. \]
\end{theorem}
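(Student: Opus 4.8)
The plan is to adapt the proof of Theorem~\ref{t:a} given in \S2, with the randomness replacing the deterministic decay hypotheses \eqref{diagcorr} and \eqref{e:off}. Set, in analogy with \eqref{e:muj},
\[
\mu_j^X := \sum_{2^{j-1} < n \leq 2^j} \frac{X_n}{n} \delta_n ,
\]
and let $T_I$, $T^\ast$, $T^\ast_{\mathcal I}$ be defined from these $\mu_j^X$ exactly as in \eqref{e:TI}--\eqref{e:T8}. The heart of the matter is to prove, almost surely, the analogues of the two correlation estimates that drove Lemma~\ref{l:sparse}: a \emph{diagonal} bound controlling $\mu_j^X * \widetilde{\mu_j^X}$ and an \emph{off-diagonal} bound controlling $\mu_j^X * \widetilde{\mu_k^X}$ for $k + C \leq j$. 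As the authors note in the remark following the Corollary, \eqref{e:off} fails pointwise in $x$ in the random setting, so one cannot simply quote Theorem~\ref{t:a}; instead one reruns the argument of \S2 with the weaker, random substitutes and checks that the Calder\'on--Zygmund/Rademacher--Menshov machinery still closes.

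First I would record the deterministic input: since $|X_n| \leq 1$ a.s., one has $\|\mu_j^X\|_1 \lesssim 1$, $\|\mu_j^X\|_2^2 = \mu_j^X * \widetilde{\mu_j^X}(0) \lesssim 2^{-j}$, and the trivial $\ell^1, \ell^\infty$ bounds on convolution with $\mu_j^X$. This already gives, by the argument of Proposition~\ref{ellp} but using only the $\ell^2$-smallness of $\|\mu_j^X\|_2$ together with a Cotlar--Stein-type square-function estimate, that $\HH_X^{\mathbb Z,\ast}$ is $\ell^r(\Z)$-bounded for $1 < r < \infty$ — the familiar part. Next, the probabilistic step: I would fix a scale and estimate the moments $\EE \big| \mu_j^X * \widetilde{\mu_j^X}(x) \big|^{2L}$ and $\EE\big\|\mu_j^X * \widetilde{\mu_k^X}\big\|_\infty^{2L}$ for large integers $L$. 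Because $X_n$ are independent and mean-zero, $\mu_j^X * \widetilde{\mu_k^X}(x) = \sum_{n} \frac{X_{n}\,\overline{X_{n-x}}}{n(n-x)}$ is (for $x \neq 0$, or for $k \neq j$) a sum of independent mean-zero terms of size $\lesssim 4^{-\min(j,k)}$, so a Rosenthal/Bernstein moment inequality yields $\EE|\,\cdot\,|^{2L} \lesssim (L \cdot 2^{-\min(j,k)} \cdot 4^{-\min(j,k)})^{L}$, up to constants. Taking $L \asymp j$, a union bound over the $\lesssim 2^{\max(j,k)}$ relevant values of $x$ together with Borel--Cantelli gives, almost surely and for all large $j$,
\[
|\mu_j^X * \widetilde{\mu_j^X}(x)| \lesssim_\omega j\, 2^{-3j/2}, \quad x \neq 0,
\qquad
\|\mu_j^X * \widetilde{\mu_k^X}\|_\infty \lesssim_\omega j\, 2^{-(j+k)/2}, \quad k < j .
\]
These are stronger than \eqref{diagcorr}--\eqref{e:off} would need in the $x$-parameter but come with the extra polynomial factor $j$ and lack the pointwise-in-$x$ structure; crucially they hold with $\epsilon = 1/2$ to spare.

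With these random correlation bounds in hand, I would then repeat the proof of Lemma~\ref{l:sparse} and Theorem~\ref{t:t*} verbatim, tracking the changes. The Calder\'on--Zygmund split $f = \gamma + b$, the decomposition $b = \sum_s b_s$ with \eqref{e:b}, and the split of $\mathcal I$ into standard and non-standard collections $\mathcal S_s(k), \mathcal N_s(k)$ all carry over unchanged. In the standard case, Proposition~\ref{p:standard} used only $\|\mu_j^X * \widetilde{\mu_j^X}\|_\infty \lesssim 2^{-(1+\epsilon)j}$ off the origin, which we now have (with $\epsilon$ slightly degraded by the $j$ factor, absorbed into a marginally smaller $\epsilon$). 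In the non-standard case, the diagonal estimate \eqref{e:diagonal}--\eqref{e:bD} is untouched, and the off-diagonal estimate \eqref{e:bO} used \eqref{e:off} in exactly the $\|\cdot\|_\infty$ form we have established; the Carleson measure bound \eqref{e:CM}, the level-set reduction to $\mathcal N^\sharp_{s,t}$, and the Rademacher--Menshov Lemma~\ref{l:RM} application \eqref{e:betadef}--\eqref{e:beta} then go through, delivering \eqref{e:N2} and hence \eqref{e:s}. Summing over $s$ gives the sparse bound $\lVert T^\ast : (1,r)\rVert \lesssim_\omega \frac{1}{r-1}$, and, as in \S2.1, three-dyadic-grid decomposition plus the $M_{HL}$ domination off $I_0$ upgrades this to the claimed bound for $\HH_X^{\mathbb Z,\ast}$.

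I expect the main obstacle to be the probabilistic correlation estimates done \emph{uniformly in the scale and in $x$}: one must choose the moment exponent $L$ growing with $j$ fast enough that the union bound over $\lesssim 2^j$ shifts $x$ and the sum over $j$ both converge (Borel--Cantelli), yet the Rosenthal constants $\lesssim L^{L}$ must not overwhelm the gain $4^{-jL}$ from the small size of the summands — this balances because the size gain is exponential in $jL$ while the loss is only $L\log L$; the bookkeeping is where care is needed. A secondary subtlety is that, unlike \eqref{e:off}, the random off-diagonal bound does not respect the fine $x$-localization of $\mu_j^X * \widetilde{\mu_k^X}$ inside $\{|x| \lesssim 2^j\}$; but inspection of \eqref{e:bO} shows only the $\|\cdot\|_\infty$ norm and the support size $\lesssim 2^j$ are used there, both of which we retain, so this costs nothing. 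Everything else is a transcription of \S2.
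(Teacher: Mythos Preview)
There is a genuine gap in the off-diagonal step, and it is exactly the point the paper singles out as requiring new ideas.

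First, the probabilistic computation is miscalculated. For $k<j$ the convolution $\mu_j^X * \widetilde{\mu_k^X}(x)$ is a sum of $\lesssim 2^{k}$ independent mean-zero terms of size $\lesssim 2^{-j-k}$ (not $4^{-\min(j,k)}$); the variance is $\lesssim 2^{-2j-k}$, and Bernstein plus a union bound over the $\lesssim 2^{j}$ relevant $x$ gives, almost surely,
\[
\|\mu_j^X * \widetilde{\mu_k^X}\|_\infty \lesssim_\omega \sqrt{j}\,2^{-j-k/2},
\]
not $j\,2^{-(j+k)/2}$. This is the bound recorded in the paper's Lemma~\ref{l:mu}.

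Second, and this is the real issue, neither your stated bound nor the correct one has the structure of \eqref{e:off}. The hypothesis \eqref{e:off} reads $\|\mu_j*\widetilde{\mu_k}\|_\infty\lesssim 2^{-\epsilon k}\cdot 2^{-j}$: clean $2^{-j}$ decay in the large scale, with the only extra factor depending on the \emph{small} scale $k$. The random bound carries an extra $\sqrt{j}$ growing with the \emph{large} scale. When you feed this into \eqref{e:bO}, the factor $|I|^{-\epsilon}$ becomes $\sqrt{k_v}\,|I|^{-1/2}$, and after the same bookkeeping you arrive at
\[
|\langle \sigma_u\beta_u,\sigma_v\beta_v\rangle|\ \lesssim\ 2^{-t-u/2}\sum_{k_v}\sqrt{k_v}\sum_{J\in\mathcal M_v(k_v)}\|b_{k_v-s}\mathbf 1_J\|_1
\ \lesssim\ 2^{-t-u/2}\sqrt{\log|I_0|}\,|I_0|.
\]
The $\sqrt{\log|I_0|}$ cannot be removed and propagates through the Rademacher--Menshov step into the constant of Lemma~\ref{l:sparse}; the resulting ``sparse bound'' then has a constant depending on the support of $f$, so \eqref{e:sparseDef} fails uniformly. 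Your sentence ``\eqref{e:bO} used \eqref{e:off} in exactly the $\|\cdot\|_\infty$ form we have established'' is thus the point where the argument breaks.

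The paper's remedy is precisely to avoid any loss in the large scale. One proves (Lemma~\ref{l:mu}) that the clean bound $|\widetilde{\mu_k}*\mu_j|\lesssim 2^{-k/4-j}$, which \emph{does} match \eqref{e:off} with $\epsilon=1/4$, holds off a random exceptional set $Z_{k,j}\subset[0,2^{j})$; this set is nonempty only when $j\gg 2^{k/2}$, and then is contained in at most $e^{-c_0 2^{k/2}}\cdot 2^{j-k}$ dyadic intervals of length $2^{k}$. In the proof of the analogue of Lemma~\ref{l:N2}, each inner product $\langle\beta_u,\beta_v\rangle$ is split into a ``Main'' part (off $Z_{k_u,k_v}$), handled exactly as in \eqref{e:bO} with $\epsilon=1/4$, and an ``Exceptional'' part on $Z_{k_u,k_v}$, where one uses only the trivial bound $|\widetilde{\mu_{k_u}}*\mu_{k_v}|\lesssim 2^{-k_v}$ together with $\|b_{k-s}\|_\infty\lesssim 2^{k-s}$ and the exponential smallness of $Z_{k_u,k_v}$. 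The exceptional contribution is then $\lesssim 2^{-s-u}|I_0|$, and the argument closes with no $|I_0|$-dependent loss. This exceptional-set mechanism is the missing idea in your proposal.
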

Analogous to the previous sections, we let
\[ \mu_i = \mu_i^{\omega} := \sum_{2^{i-1} < n \leq 2^i} \frac{X_n}{n} \delta_n;\]
we will generally suppress the $\omega$ in our notation. Our first order of business is to establish good estimates on the convolutions $\tilde{\mu_i}* \mu_j, \ i \leq j$. We do so in the following subsection.

\subsection{Random Preliminaries}
We need the following well known large deviation inequality.

\begin{lemma}[Chernoff's Inequality]\label{CHERN}
Let $\{Z_n\}$ be mean-zero, independent random variables, all of which are almost surely bounded in magnitude by $1$. Then there exists an absolute constant $c  > 0$ so
\[ \PP \biggl( \biggr| \sum_{n=1}^N Z_n \biggr| \geq A \biggr) \lesssim \max\{ e^{-c \frac{A^2}{V_N}}, e^{-cA} \},\]
where $V_N = \sum_{n=1}^N \mathbb{E}|Z_n|^2$.
\end{lemma}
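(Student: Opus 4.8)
The plan is to prove this by the classical Cram\'er--Chernoff exponential moment method. Set $S_N := \sum_{n=1}^N Z_n$. By replacing each $Z_n$ with $-Z_n$ and taking a union bound, it suffices to bound the one-sided tail $\PP(S_N \ge A)$ by $\max\{e^{-cA^2/V_N}, e^{-cA}\}$ and then double the resulting estimate to control $\PP(|S_N| \ge A)$. (If the $Z_n$ are complex-valued, one applies the real case to $\Rea Z_n$ and $\Ima Z_n$ separately, at the cost of a harmless numerical factor.)

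First I would note that for any $\lambda \in (0,1]$, Markov's inequality applied to the nonnegative variable $e^{\lambda S_N}$ gives
\[ \PP(S_N \ge A) \le e^{-\lambda A}\,\mathbb{E} e^{\lambda S_N} = e^{-\lambda A} \prod_{n=1}^N \mathbb{E} e^{\lambda Z_n}, \]
using independence in the last step. To estimate each factor, one uses that $|\lambda Z_n| \le 1$ almost surely, so the elementary inequality $e^y \le 1 + y + y^2$ valid for $|y|\le 1$, together with $\mathbb{E} Z_n = 0$, yields
\[ \mathbb{E} e^{\lambda Z_n} \le 1 + \lambda^2\, \mathbb{E}|Z_n|^2 \le \exp\!\bigl(\lambda^2\, \mathbb{E}|Z_n|^2\bigr). \]
Multiplying over $n$ gives $\mathbb{E} e^{\lambda S_N} \le \exp(\lambda^2 V_N)$, and hence $\PP(S_N \ge A) \le \exp(\lambda^2 V_N - \lambda A)$ for every $0 < \lambda \le 1$.

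It then remains to optimize the exponent over the admissible range $\lambda \in (0,1]$. The unconstrained minimizer of $\lambda^2 V_N - \lambda A$ is $\lambda_\ast = A/(2V_N)$, which naturally splits the argument into two regimes. If $A \le 2V_N$, then $\lambda_\ast \le 1$ is admissible and the exponent equals $-A^2/(4V_N)$, giving the sub-Gaussian bound $e^{-A^2/(4V_N)}$. If $A > 2V_N$, then $\lambda = 1$ is the best admissible choice, and since $V_N < A/2$ the exponent $V_N - A$ is $< -A/2$, giving the Poissonian tail $e^{-A/2}$. In either case $\PP(S_N \ge A) \le \max\{e^{-A^2/(4V_N)}, e^{-A/2}\}$, so that $\PP(|S_N| \ge A) \le 2\max\{e^{-A^2/(4V_N)}, e^{-A/2}\} \lesssim \max\{e^{-cA^2/V_N}, e^{-cA}\}$ with, say, $c = \tfrac14$.

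There is no genuine obstacle here; the one point requiring care is that the per-factor bound $\mathbb{E} e^{\lambda Z_n} \le \exp(\lambda^2 \mathbb{E}|Z_n|^2)$ is only available for $\lambda$ bounded (here $\lambda \le 1$), so the minimization over $\lambda$ is constrained — and it is precisely this constraint that produces the two alternatives in the statement: the sub-Gaussian term $e^{-cA^2/V_N}$, which is the effective bound in the range $A \lesssim V_N$, and the linear-exponential term $e^{-cA}$, which takes over once $A \gtrsim V_N$.
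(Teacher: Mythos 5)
Your proof is correct and complete: the exponential-moment (Cram\'er--Chernoff) argument with the constrained optimization over $\lambda\in(0,1]$ is exactly the standard route to this Bernstein-type bound, and your two regimes $A\le 2V_N$ and $A>2V_N$ correctly produce the sub-Gaussian and Poissonian alternatives in the stated maximum. The paper itself states this lemma as a well-known large deviation inequality and offers no proof, so there is nothing to compare against; your argument fills that gap with the canonical one.
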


Using Lemma \ref{CHERN}, we have the following control over $\tilde{\mu_i} * \mu_j, \ i \leq j$.

\begin{lemma} \label{l:mu}
Almost surely, the following hold.
\begin{itemize}
\item $| \tilde{\mu_i}* \mu_i (x)| \lesssim_\omega 2^{-5i/4}$ for $x \neq 0$;
\item For $i < j$, $| \tilde{\mu_i}* \mu_j  | \lesssim_\omega \frac{\sqrt{j}}{2^{i/2 +j}} \mathbf{1}_{[0, 2^j)}$, and thus for all $j \lesssim 2^{i/2}$,
\[ | \tilde{\mu_i}* \mu_j | \lesssim_\omega 2^{-i/4 -j} \mathbf{1}_{[0, 2^j)};
\]
\item For $2^{i/2} \ll j$, 
\begin{equation}
\label{e:Z} 
 Z_{i,j} := \{ |\tilde{\mu_i}* \mu_j| \gg_\omega 2^{-i/4 -j} \} \subset \bigcup_{m} I_m 
\end{equation}
where each $|I_m| = 2^i$ is dyadic, and the (disjoint) union is over at most a constant multiple of $e^{-c_0 2^{i/2}  } \times 2^{j-i}$ intervals, for some (small) absolute constant $c_0$. 
\end{itemize}
\end{lemma}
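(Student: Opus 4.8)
The plan is to expand each convolution and exploit the near‑independence of its summands. Writing $\mu_i = \sum_{2^{i-1} < n \leq 2^i} \tfrac{X_n}{n}\delta_n$, one has for $i \le j$
\[ \tilde\mu_i * \mu_j(x) = \sum_{\substack{2^{i-1} < n \leq 2^i\\ 2^{j-1} < n+x \leq 2^j}} \frac{\overline{X_n}\,X_{n+x}}{n(n+x)}, \]
and I set $S_x := \sum_n \overline{X_n}X_{n+x}$, the weights $1/(n(n+x))$ being smooth of size $2^{-(i+j)}$ and harmless. The key structural point is a decoupling: when $i<j$ the indices $n$ and $n+x$ range over the disjoint dyadic intervals $(2^{i-1},2^i]$ and $(2^{j-1},2^j]$, so for fixed $x$ the summands $\overline{X_n}X_{n+x}$ are \emph{independent}, mean‑zero and bounded by $1$; when $i=j$ and $x\neq0$ the graph on the indices $n$ joining $n$ to $n+x$ is a disjoint union of arithmetic‑progression chains, and a parity $2$‑coloring of each chain breaks $S_x$ into two sums of independent mean‑zero bounded variables. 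In every case Chernoff's inequality (Lemma~\ref{CHERN}) applies with variance proxy $V \lesssim 2^i$ (the number of summands), giving $\PP(|S_x| \geq A) \lesssim \max\{e^{-cA^2/2^i},\, e^{-cA}\}$.

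The first two bullets follow by a union bound and Borel--Cantelli. For $i=j$ there are $<2^i$ nonzero $x$; choosing $A \asymp 2^{3i/4}$ puts us in the Gaussian regime, $\PP(|S_x| \gtrsim 2^{3i/4}) \lesssim e^{-c2^{i/2}}$, and $\sum_i 2^i e^{-c2^{i/2}} < \infty$, so a.s.\ $|S_x|\lesssim 2^{3i/4}$ for all large $i$ and all $x$; dividing by $n(n+x) \asymp 2^{2i}$ gives $|\tilde\mu_i*\mu_i(x)| \lesssim 2^{-5i/4}$. For $i<j$, the convolution is supported in $[0,2^j)$ (here $i<j$ forces $2^i\le 2^{j-1}$), so there are $\lesssim 2^j$ values of $x$: if $j\lesssim 2^i$ then $A \asymp 2^{i/2}\sqrt j \lesssim 2^i \asymp V$, the Gaussian term dominates, $\PP(|S_x|\gtrsim 2^{i/2}\sqrt j)\lesssim e^{-cj}$, which beats $2^j$ and is summable over pairs $(i,j)$; if $j \gtrsim 2^i$ the deterministic bound $|S_x|\le \#\{n\}\lesssim 2^i \lesssim 2^{i/2}\sqrt j$ already suffices. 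This gives $|\tilde\mu_i*\mu_j|\lesssim_\omega \tfrac{\sqrt j}{2^{i/2+j}}\mathbf 1_{[0,2^j)}$, and the consequence for $j\lesssim 2^{i/2}$ is immediate since then $\sqrt j \lesssim 2^{i/4}$.

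The third bullet is the main obstacle. The event $|\tilde\mu_i*\mu_j(x)| \gg 2^{-i/4-j}$ is, up to constants, $\{|S_x| \gg 2^{3i/4}\}$, and by the Chernoff estimate (again the Gaussian regime, since $2^{3i/4}\ll 2^i$) $\PP(|S_x|\gg 2^{3i/4})\lesssim e^{-c2^{i/2}}$. Covering $[0,2^j)$ by its $2^{j-i}$ dyadic intervals of length $2^i$ and calling a block $B$ \emph{bad} if some $x\in B$ satisfies $|S_x|\gg 2^{3i/4}$, a union bound over the $2^i$ values of $x\in B$ gives $\PP(B\text{ bad})\lesssim 2^i e^{-c2^{i/2}} \le e^{-c_0 2^{i/2}}$ for $i$ large; this is already the claimed \emph{expected} number of bad blocks, but one needs an almost‑sure bound uniform in the relevant $(i,j)$. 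I would obtain it by conditioning on $\mathcal F_i := \sigma(X_n : 2^{i-1}<n\le 2^i)$: given $\mathcal F_i$, the variable $S_x$ for $x$ in a block $B$ depends only on $\{X_m\}$ with $m$ within an $O(2^i)$‑neighbourhood of $B$, so the events $\{B\text{ bad}\}$, restricted to blocks in any fixed residue class modulo $3$, are conditionally independent, each still of conditional probability $\le e^{-c_0 2^{i/2}}$ (a deterministic bound, using $|X_n|\le1$). Applying Chernoff (Lemma~\ref{CHERN}) to the resulting sum of indicators shows that, for each residue class, the number of bad blocks inside $[0,2^j)$ exceeds $2^{j-i}e^{-c_0' 2^{i/2}}$ with probability at most $2^{-e^{c''2^{i/2}}}$; here the hypothesis $2^{i/2}\ll j$ (quantitatively, $j \gtrsim C 2^{i/2}$ for a large absolute $C$) is precisely what makes this, and hence the double sum over $(i,j)$, summable. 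A final Borel--Cantelli over $i$ completes the proof; recursing into the boundary blocks near $2^{j-1}$ and $2^j$ and tracking the smooth weights $1/(n(n+x))$ is routine and omitted.
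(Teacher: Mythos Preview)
Your proof is correct and follows essentially the same strategy as the paper's: Chernoff plus a union bound plus Borel--Cantelli for the first two items, and for the third a binomial-type concentration on the number of bad dyadic $2^i$-blocks after arranging their independence. Your conditioning on $\mathcal F_i=\sigma(X_n:2^{i-1}<n\le 2^i)$ is in fact a cleaner justification of block independence than the paper's bare assertion that subdividing into ten subfamilies makes the $\{V_I^{i,j}\}$ independent (taken literally that fails, since every $V_I^{i,j}$ depends on the same $\{X_n\}_{n\le 2^i}$; your conditioning is exactly what makes it true).
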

\begin{remark}
We will use without comment the trivial upper bound (useful on the exceptional sets $Z_{i,j}$):
$|\tilde{\mu_i}*\mu_j| \lesssim 2^{-j}$.
\end{remark}
\begin{proof}
The convolution in question is explicitly, 
\[ \tilde{\mu_i}*\mu_j(x) = \sum_{2^{i-1} < n \leq 2^i, \ 2^{j-1} < x+n \leq 2^j} \frac{X_{x+n}X_n}{(x+n)n},  \qquad i \leq j.  \]
Observe that the sum above is over uniformly bounded, mean zero random variables.  And that 
\begin{equation}
\mathbb E 
\tilde{\mu_i}*\mu_j(x) ^2 
\lesssim 2^{-i-2j} , \qquad x\neq 0
\end{equation}
where the implied constant depends upon the uniform bound on the random variables $\{X_n\}$. Since the sum in the definition of $\tilde{\mu_i}*\mu_j(x)$ can  be separated into two sums, each over independent mean-zero random variables, the first point follows from Chernoff's inequality, Lemma \ref{CHERN}, and a Borel-Cantelli argument.
 The second point is similar.

For the third, for dyadic $|I| = 2^i$, $I \subset [0,2^j)$, consider the random variables
\[ V_I^{i,j} := \sup_{x \in I} | \tilde{\mu_i}*\mu_j (x) |.\]
By subdividing into (say) ten subfamilies, we may assume that the $\{ V_I^{i,j} : I \}$ are independent. By Chernoff's inequality, we know that for each $I$,
\[ \mathbb{P}( V_I^{i,j} \geq 2^{-i/4 -j} ) \lesssim e^{-c 2^{i/2}} \]
for some absolute $c > 0$.
By a binomial distribution argument, provided that $c_0$ is sufficiently small, there exists some $c > c' > 0$ so that
\[ \PP( |\{ I : V_I^{i,j} \geq 2^{-i/4 -j}\}| \geq e^{-c_0 2^{i/2}} 2^{j-i} ) 
\lesssim e^{ - c' 2^{j - i/2} e^{-c_0 2^{i/2}} }.\]
Summing over $j \gg 2^{i/2}$ and applying Borel-Cantelli yields the result.
\end{proof}

\subsection{The Proof}
We follow the argument of $\S 2$, with the obvious notational changes; the key lemma needed is analogous to 
\eqref{e:N4} in the proof of 
Lemma \ref{l:N2}. We refer to $\S 2$ for relevant definitions.

\begin{lemma}\label{l:RN2}  There exists some $\delta > 0$ so that we have the bound uniformly in $s$
\begin{equation}\label{e:RN2}
\lVert   T _{\mathcal N_{s,t} ^{\sharp} } ^{\ast}  b\rVert _ 2 \lesssim  2 ^{-\delta s} |I_0|^{1/2} . 
\end{equation}
\end{lemma}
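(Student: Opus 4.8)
The plan is to reproduce the proof of \eqref{e:N4} from $\S 2$ essentially line for line, with \eqref{diagcorr} replaced by the first bullet of Lemma~\ref{l:mu} and \eqref{e:off} replaced by the second and third bullets of Lemma~\ref{l:mu}. So I set $\beta_u = \sum_k\sum_{I\in\mathcal M_u(k)}T_I b_{k-s}$ as in \eqref{e:betadef}, apply the Rademacher--Menshov Lemma~\ref{l:RM}, and reduce \eqref{e:RN2} to a Bessel-type bound $\bigl\lVert\sum_u\sigma_u\beta_u\bigr\rVert_2\lesssim_\omega 2^{-\delta' s}|I_0|^{1/2}$ for some $\delta'>\delta$, the factor $\log u_0\lesssim s$ being absorbed into the exponent. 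The diagonal sum $\sum_u\lVert\sigma_u\beta_u\rVert_2^2\lesssim 2^{-s}|I_0|$ is estimated exactly as in \eqref{e:diagonal}--\eqref{e:bD}, now invoking $\tilde\mu_i*\mu_i(0)\simeq|I|^{-1}$ together with the almost-sure estimate $|\tilde\mu_i*\mu_i(x)|\lesssim_\omega 2^{-5i/4}$ ($x\neq 0$) in place of \eqref{diagcorr}; this is a strictly stronger input.

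For the off-diagonal inner products $\langle\sigma_u\beta_u,\sigma_v\beta_v\rangle$ ($u<v$), the relevant scales are $i=k_u<j=k_v$, and here it is crucial that $i>s$, since $b_{k-s}$ appears only for $k>s$. I split the kernel $\tilde\mu_i*\mu_j=(\tilde\mu_i*\mu_j)\mathbf 1_{Z_{i,j}^c}+(\tilde\mu_i*\mu_j)\mathbf 1_{Z_{i,j}}$ according to Lemma~\ref{l:mu}. On $Z_{i,j}^c$ one has $|\tilde\mu_i*\mu_j|\lesssim_\omega 2^{-i/4-j}\approx|I|^{-1/4}|J|^{-1}$ — by the second bullet when $j\lesssim 2^{i/2}$ (where $Z_{i,j}$ is empty), and by the third bullet when $2^{i/2}\ll j$ — so this piece obeys an \eqref{e:off}-type estimate with $\epsilon=1/4$, and the estimate \eqref{e:bO} and the subsequent Cauchy--Schwarz split of the $u$-sum at $u\simeq s/\epsilon$ go through verbatim, contributing $\lesssim_\omega 2^{-\delta' s}|I_0|^{1/2}$.

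The exceptional pieces $(\tilde\mu_i*\mu_j)\mathbf 1_{Z_{i,j}}$, which occur only for $2^{i/2}\ll j$, are the one genuinely new point: this is exactly where \eqref{e:off} fails, and it is the main obstacle. Here I have only the crude bound $|\tilde\mu_i*\mu_j|\lesssim 2^{-j}$, but by \eqref{e:Z} the set $Z_{i,j}$ is covered by $\lesssim_\omega e^{-c_0 2^{i/2}}2^{j-i}$ dyadic intervals of length $2^i$, so $\lVert(\tilde\mu_i*\mu_j)\mathbf 1_{Z_{i,j}}\rVert_1\lesssim_\omega e^{-c_0 2^{i/2}}$ and, summing over the relevant ranges $i>s$ and $2^{i/2}\ll j\leq\log_2|I_0|$, the set $\bigcup_{i,j}Z_{i,j}$ has total measure $\lesssim_\omega e^{-c_0 2^{s/2}}|I_0|\ll\tfrac14|I_0|$. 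I therefore dispose of these pieces by a removal step entirely parallel to the treatment of $F_t$ in $\S 2$: cover the offending intervals, push them into the sparse collection, and recurse; because the constraint $i=k_u>s$ forces the gain $e^{-c_0 2^{k_u/2}}\leq e^{-c_0 2^{s/2}}$, which dominates every polynomial-in-$s$ factor and every count $u_0\lesssim 2^{s}$ that appears downstream, this step is harmless. Combining the diagonal bound, the \eqref{e:bO} estimate run with $\epsilon=1/4$, and this negligible exceptional contribution gives the desired Bessel bound, and Lemma~\ref{l:RM} then yields \eqref{e:RN2}.
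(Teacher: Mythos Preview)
Your setup via Rademacher--Menshov, the diagonal estimate, and the ``Main'' off-diagonal piece (run exactly as in \eqref{e:bO} with $\epsilon=1/4$) all match the paper and are fine.

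The gap is in your treatment of the exceptional piece. The sets $Z_{i,j}$ live in the \emph{kernel} variable: they are subsets of $[0,2^j)$ describing where $\tilde\mu_i\ast\mu_j$ is large as a function of the difference $x-y$, not subsets of the physical interval $I_0$. So the sentence ``the set $\bigcup_{i,j}Z_{i,j}$ has total measure $\lesssim_\omega e^{-c_0 2^{s/2}}|I_0|$'' is a category error, and there are no ``offending intervals'' in $\mathcal N^\sharp_{s,t}$ to push into the sparse collection---being exceptional is a property of the pair $(x,y)$ through $x-y$, not of any single $I$ or $J$. If you try to pass from kernel space to physical space by forming $\tfrac13 J+Z_{k_u,k_v}$, you get $\lesssim e^{-c_0 2^{k_u/2}}2^{k_v-k_u}$ intervals each of length $\sim 2^{k_v}$, whose union can vastly exceed $|J|$; a removal step modeled on $F_t$ does not close.

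What the paper does instead is a direct estimate of $\text{Exceptional}(u,v)$: for each fixed $y\in\tfrac13 J$, at most $\lesssim e^{-c_0 2^{k_u/2}}2^{k_v-k_u}$ dyadic intervals $I$ of length $2^{k_u}$ meet $y+Z_{k_u,k_v}$; combine this with the crude $|\tilde\mu_{k_u}\ast\mu_{k_v}|\lesssim 2^{-k_v}$, the bound $\|b_{k_u-s}\|_\infty\lesssim 2^{k_u-s}$, and $k_u\gtrsim u$ to obtain $\text{Exceptional}(u,v)\lesssim 2^{-s-u}|I_0|$, which then sums over $Cs\le u<v\le u_0$. You were in fact one step from this: your own observation $\|(\tilde\mu_i\ast\mu_j)\mathbf 1_{Z_{i,j}}\|_1\lesssim_\omega e^{-c_0 2^{i/2}}$, paired with $\|b_{k_u-s}\|_\infty\lesssim 2^{k_u-s}$ and either $k_u\gtrsim u$ or your $k_u>s$, already gives a summable bound directly---no removal is needed or available.
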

\begin{proof}
As in the proof of Lemma \ref{l:N2}, set
\[ \beta_u := \sum_k \sum_{I \in \mathcal{M}_u(k)} T_I b_{k-s}; \]
we will show that
\[ \Bigl\| \sum_{u=1}^{u_0} \sigma_u \beta_u \Bigr\|_2 \lesssim 2^{-\delta's} |I_0|^{1/2}, \qquad \sigma_t \in \{-1,0,1\}, \]
where $u_0 \lesssim 2^t \lesssim 2^s$. This will allow us to conclude Lemma \ref{l:RN2}, by an application of the Rademacher-Menshov Lemma~\ref{l:RM}.
The diagonal terms are estimated exactly as in  \eqref{e:diagonal}. By Cauchy-Schwarz, we need only show that 
\begin{equation}\label{e:doublesum} \sum_{ Cs \leq u < v \leq u_0 } | \langle \sigma_u \beta_u, \sigma_v \beta_v \rangle| \lesssim 2^{-\delta' s} |I_0|. \end{equation}
Divide the summands on the left into 
\begin{align}
|\langle \sigma _u  \beta _u, \sigma _ v \beta _v \rangle| 
& \leq \Biggl| \sum_{k_v } \sum_{k_u \;:\; k_u  < k_v} 
\sum_{J\in \mathcal M _{v} (k_v)}  \sum_{ \substack{I\in \mathcal M _{u} (k_u)\\I\subset J }}   
\langle b _{k_u -s}  , T _{I} ^{\ast} T _{J} b _{k_v -s}  \rangle  \Biggr|
\\
& \lesssim \text{Main}(u,v) + \text{Exceptional}(u,v), \end{align}
where the inner products on the right are treated according to the estimates of Lemma \ref{l:mu}. The two terms are 
\begin{gather} 
\text{Main}(u,v) := 
 \sum_{k_v } 
\sum_{J\in \mathcal M _{v} (k_v)} 
\sum_{k_u \;:\; k_u  < k_v}   \sum_{ \substack{I\in \mathcal M _{u} (k_u)\\I\subset J }}  
 \lvert  I\rvert ^{- 1/4 } |J|^{-1} \lVert b _{k_u-s} \mathbf 1_{I} \rVert _1 \| b _{k_v-s} \mathbf{1}_J \|_1,
 \\
 \begin{split} 
&\text{Exceptional}(u,v) \\
&  := 
 \sum_{k_v } 
 \sum_{J\in \mathcal M _{v} (k_v)}
 \sum_{\substack{k_u  \\ 2^{k_u/2}  \ll k_v} } 
\sum_{y \in \frac 13 J} b_{k_v-s}(y) 
  \left(\sum_x   \sum_{ \substack{I\in \mathcal M _{u} (k_u)\\I\subset J }} b_{k_u-s} \mathbf{1}_{\frac{1}{3}I}(x) \mathbf{1}_{Z_{k_u,k_v}}(x-y) \right).  
  \end{split}
\end{gather}
 
We begin by estimating $\text{Main}(u,v)$ as follows:
\begin{align}
\text{Main}(u,v) & 
\lesssim 2 ^{-t} \sum_{k_v} \sum_{J \in \mathcal{M}_v(k_v)} \sum_{k_u \;:\; k_u  < k_v} \sum_{ \substack{I\in \mathcal M _{u} (k_u)\\I\subset J }} |I|^{-1/4} \frac{|I|}{|J|} \| b_{k_v-s} \mathbf{1}_J\|_1 \\
&\lesssim 2^{-t-u/4} \sum_{k_v} \sum_{J \in \mathcal{M}_{v}(k_v)} \| b_{k_v-s} \mathbf{1}_J \|_1 \\
&\lesssim 2^{-t-u/4} \|b \|_1 
\lesssim 2^{-t-u/4}|I_0|. \end{align}
As for $\text{Exceptional}(u,v)$, the key estimate is \eqref{e:Z}: 
\begin{align}
&\text{Exceptional}(u,v) \\
& \qquad \lesssim
  \sum_{k_v}  
\sum_{J\in \mathcal M _{v} (k_v)} \langle b_{k_v-s}\rangle _{\frac{1}{3}J}   \sum_{k_u \;:\; 2^{k_u/2}  \ll k_v} \sup_{y \in \frac{1}{3}J}
\biggl\| \sum_{\substack{ I\in \mathcal M _{u} (k_u) \\  I\subset J }} b_{k_u-s} \mathbf{1}_{\frac{1}{3}I} \mathbf{1}_{y+Z_{k_u,k_v}} \biggr\|_1 
\\
& \qquad \lesssim  
  \sum_{k_v }  
\sum_{J\in \mathcal M _{v} (k_v)} \langle b_{k_v-s}\rangle _{\frac{1}{3}J}  \sum_{k_u \;:\; 2^{k_u/2}  \ll k_v} 2^{k_u-s} \sup_{y \in \frac{1}{3}J} \biggl| \bigcup_{ I \in \mathcal{M}_u(k_u)} \{ I \subset J, I \cap (y+Z_{k_u,k_v}) \neq \emptyset \}\biggr| \\
& \qquad \lesssim \sum_{k_v}
\sum_{J\in \mathcal M _{v} (k_v)} \langle b_{k_v-s}\rangle _{\frac{1}{3}J}  \sum_{k_u \;:\; 2^{k_u/2}  \ll k_v} 2^{k_u-s} 2^{k_v} e^{-c_0 2^{k_u/2}} \\
& \qquad \lesssim 2^{-s} \sum_{k_v} 
\sum_{J\in \mathcal M _{v} (k_v)}  \int_J  b_{k_v-s}   \sum_{k_u \;:\; 2^{k_u/2}  \ll k_v} 2^{k_u} e^{-c_0 2^{k_u/2}} 
\\
& \qquad \lesssim 2^{-s}2^u e^{-c_0 2^{u/2}} \| b\|_1 
 \lesssim 2^{-s-u}|I_0|.
\end{align}
In treating $\text{Exceptional}(u,v)$, the key estimates that we used were that $\| b_k\|_{\infty} \lesssim 2^k$, see \eqref{e:b}, and  this consequence of \eqref{e:Z}: For any $y$,
\[ |\{ I : |I| = 2^{k_u} \ \text{dyadic}, I \cap Z_{k_u,k_v} + y \neq \emptyset\}| \lesssim 2^{k_v} e^{-c_0 2^{k_u/2}}. \]
The upshot is that we have majorized $|\langle \sigma_u \beta_u, \sigma_v \beta_v\rangle|$ by $2 ^{-t- u/4} \lvert  I_0\rvert$.
Summing this over 
\[ Cs \leq u < v \leq u_0 \lesssim 2^t\] 
yields the desired bound, completing the proof.
\end{proof}
 
\bibliographystyle{alpha}	

\begin{bibdiv}
\begin{biblist}

\bib{MR3531367}{article}{
      author={Bernicot, Fr{\'e}d{\'e}ric},
      author={Frey, Dorothee},
      author={Petermichl, Stefanie},
       title={Sharp weighted norm estimates beyond {C}alder\'on-{Z}ygmund
  theory},
        date={2016},
        ISSN={2157-5045},
     journal={Anal. PDE},
      volume={9},
      number={5},
       pages={1079\ndash 1113},
  url={http://dx.doi.org.prx.library.gatech.edu/10.2140/apde.2016.9.1079},
      review={\MR{3531367}},
}

\bib{MR0227354}{article}{
      author={Calder{\'o}n, A.-P.},
       title={Ergodic theory and translation-invariant operators},
        date={1968},
        ISSN={0027-8424},
     journal={Proc. Nat. Acad. Sci. U.S.A.},
      volume={59},
       pages={349\ndash 353},
      review={\MR{0227354}},
}

\bib{MR951506}{article}{
      author={Christ, Michael},
       title={Weak type {$(1,1)$} bounds for rough operators},
        date={1988},
        ISSN={0003-486X},
     journal={Ann. of Math. (2)},
      volume={128},
      number={1},
       pages={19\ndash 42},
         url={http://dx.doi.org.prx.library.gatech.edu/10.2307/1971461},
      review={\MR{951506}},
}

\bib{CCPO}{article}{
      author={{Conde-Alonso}, Jos{\'e}~M.},
      author={{Culiuc}, Amalia},
      author={Di~Plinio, Francesco},
      author={Ou, Yueng},
       title={A sparse domination principle for rough singular integrals},
      eprint={http://arxiv.org/abs/1612.09201},
}

\bib{MR3521084}{article}{
      author={Conde-Alonso, Jos{\'e}~M.},
      author={Rey, Guillermo},
       title={A pointwise estimate for positive dyadic shifts and some
  applications},
        date={2016},
        ISSN={0025-5831},
     journal={Math. Ann.},
      volume={365},
      number={3-4},
       pages={1111\ndash 1135},
  url={http://dx.doi.org.prx.library.gatech.edu/10.1007/s00208-015-1320-y},
      review={\MR{3521084}},
}

\bib{MR0084632}{article}{
      author={Cotlar, Mischa},
       title={A unified theory of {H}ilbert transforms and ergodic theorems},
        date={1955},
     journal={Rev. Mat. Cuyana},
      volume={1},
       pages={105\ndash 167 (1956)},
      review={\MR{0084632}},
}

\bib{MR2122913}{article}{
   author={Cuny, C.},
   title={On randomly weighted one-sided ergodic Hilbert transforms},
   journal={Ergodic Theory Dynam. Systems},
   volume={25},
   date={2005},
   number={1},
   pages={89--99},
   issn={0143-3857},
   review={\MR{2122913}},
   doi={10.1017/S0143385704000495},
}

\bib{MR2122914}{article}{
   author={Cuny, C.},
   title={Addendum to: ``On randomly weighted one-sided ergodic Hilbert
   transforms'' [Ergodic Theory Dynam. Systems {\bf 25} (2005), no. 1,
   89--99; MR2122913]},
   journal={Ergodic Theory Dynam. Systems},
   volume={25},
   date={2005},
   number={1},
   pages={101--106},
   issn={0143-3857},
   review={\MR{2122914}},
   doi={10.1017/S0143385704000513},
}

\bib{MR2138874}{article}{
   author={Demeter, Ciprian},
   title={Almost everywhere convergence of series in $L^1$},
   journal={Proc. Amer. Math. Soc.},
   volume={133},
   date={2005},
   number={8},
   pages={2319--2326},
   issn={0002-9939},
   review={\MR{2138874}},
   doi={10.1090/S0002-9939-05-07957-8},
}

\bib{MR2403711}{article}{
      author={Demeter, Ciprian},
      author={Tao, Terence},
      author={Thiele, Christoph},
       title={Maximal multilinear operators},
        date={2008},
        ISSN={0002-9947},
     journal={Trans. Amer. Math. Soc.},
      volume={360},
      number={9},
       pages={4989\ndash 5042},
  url={http://dx.doi.org.prx.library.gatech.edu/10.1090/S0002-9947-08-04474-7},
      review={\MR{2403711}},
}

\bib{MR0102584}{article}{
      author={Dowker, Yael~Naim},
      author={Erd{\H{o}}s, Paul},
       title={Some examples in ergodic theory},
        date={1959},
        ISSN={0024-6115},
     journal={Proc. London Math. Soc. (3)},
      volume={9},
       pages={227\ndash 241},
      review={\MR{0102584}},
}

\bib{14074736}{article}{
      author={Eisner, Tanja},
      author={Krause, Ben},
       title={({U}niform) convergence of twisted ergodic averages},
        date={2016},
        ISSN={0143-3857},
     journal={Ergodic Theory Dynam. Systems},
      volume={36},
      number={7},
       pages={2172\ndash 2202},
         url={http://dx.doi.org.prx.library.gatech.edu/10.1017/etds.2015.6},
      review={\MR{3568976}},
}

\bib{MR0257819}{article}{
      author={Fefferman, Charles},
       title={Inequalities for strongly singular convolution operators},
        date={1970},
        ISSN={0001-5962},
     journal={Acta Math.},
      volume={124},
       pages={9\ndash 36},
      review={\MR{0257819}},
}

\bib{MR0030708}{article}{
      author={Halmos, Paul~R.},
       title={A nonhomogeneous ergodic theorem},
        date={1949},
        ISSN={0002-9947},
     journal={Trans. Amer. Math. Soc.},
      volume={66},
       pages={284\ndash 288},
      review={\MR{0030708}},
}

\bib{MR0000906}{article}{
      author={Izumi, Shin-ichi},
       title={A non-homogeneous ergodic theorem},
        date={1939},
     journal={Proc. Imp. Acad., Tokyo},
      volume={15},
       pages={189\ndash 192},
      review={\MR{0000906}},
}

\bib{160901564}{article}{
      author={{Krause}, B.},
      author={{Lacey}, M.~T.},
       title={{A Sparse Bound for Maximal Monomial Oscillatory Hilbert
  Transforms}},
        date={2016-09},
     journal={ArXiv e-prints},
      eprint={1609.01564},
}

\bib{160908701}{article}{
      author={{Krause}, Ben},
      author={Lacey, Michael~T.},
       title={{Sparse Bounds for Random Discrete Carleson Theorems}},
        date={2016-09},
     journal={ArXiv e-prints},
      eprint={1609.08701},
}

\bib{150105818}{article}{
      author={{Lacey}, Michael~T.},
       title={{An elementary proof of the $A\_2$ Bound}},
        date={2015-01},
     journal={ArXiv e-prints},
      eprint={1501.05818},
}

\bib{MR2576702}{article}{
      author={LaVictoire, Patrick},
       title={An {$L^1$} ergodic theorem for sparse random subsequences},
        date={2009},
        ISSN={1073-2780},
     journal={Math. Res. Lett.},
      volume={16},
      number={5},
       pages={849\ndash 859},
  url={http://dx.doi.org.prx.library.gatech.edu/10.4310/MRL.2009.v16.n5.a8},
      review={\MR{2576702}},
}

\bib{MR3421994}{article}{
      author={Mirek, Mariusz},
       title={Weak type {$(1,1)$} inequalities for discrete rough maximal
  functions},
        date={2015},
        ISSN={0021-7670},
     journal={J. Anal. Math.},
      volume={127},
       pages={247\ndash 281},
  url={http://dx.doi.org.prx.library.gatech.edu/10.1007/s11854-015-0030-4},
      review={\MR{3421994}},
}

\bib{MR691275}{article}{
      author={Petersen, Karl},
       title={Another proof of the existence of the ergodic {H}ilbert
  transform},
        date={1983},
        ISSN={0002-9939},
     journal={Proc. Amer. Math. Soc.},
      volume={88},
      number={1},
       pages={39\ndash 43},
         url={http://dx.doi.org.prx.library.gatech.edu/10.2307/2045106},
      review={\MR{691275}},
}

\bib{MR939919}{article}{
   author={Rosenblatt, Joseph},
   title={Almost everywhere convergence of series},
   journal={Math. Ann.},
   volume={280},
   date={1988},
   number={4},
   pages={565--577},
   issn={0025-5831},
   review={\MR{939919}},
   doi={10.1007/BF01450077},
}

\bib{MR1325697}{incollection}{
      author={Rosenblatt, Joseph~M.},
      author={Wierdl, M{\'a}t{\'e}},
       title={Pointwise ergodic theorems via harmonic analysis},
        date={1995},
   booktitle={Ergodic theory and its connections with harmonic analysis
  ({A}lexandria, 1993)},
      series={London Math. Soc. Lecture Note Ser.},
      volume={205},
   publisher={Cambridge Univ. Press, Cambridge},
       pages={3\ndash 151},
  url={http://dx.doi.org.prx.library.gatech.edu/10.1017/CBO9780511574818.002},
      review={\MR{1325697}},
}

\bib{MR2318621}{article}{
      author={Urban, Roman},
      author={Zienkiewicz, Jacek},
       title={Weak type {$(1,1)$} estimates for a class of discrete rough
  maximal functions},
        date={2007},
        ISSN={1073-2780},
     journal={Math. Res. Lett.},
      volume={14},
      number={2},
       pages={227\ndash 237},
  url={http://dx.doi.org.prx.library.gatech.edu/10.4310/MRL.2007.v14.n2.a6},
      review={\MR{2318621}},
}

\bib{MR1545013}{article}{
      author={van~der Corput, J.~G.},
       title={Neue zahlentheoretische {A}bsch\"atzungen},
        date={1929},
        ISSN={0025-5874},
     journal={Math. Z.},
      volume={29},
      number={1},
       pages={397\ndash 426},
         url={http://dx.doi.org.prx.library.gatech.edu/10.1007/BF01180539},
      review={\MR{1545013}},
}

\end{biblist}
\end{bibdiv}

\end{document}